\global\long\def\eps{\varepsilon}   
\global\long\def\N{\mathbb{N}}      
\newcommand{\E}{\mathbb{E}}         
\newcommand{\Prob}{\mathbb{P}}      
\newcommand{\R}{\mathcal{R}}        
\newcommand{\size}[1]{\left\lvert{#1}\right\rvert}      
\newcommand{\lb}{\left(}        
\newcommand{\rb}{\right)}       
\newcommand{\lsb}{\left[}       
\newcommand{\rsb}{\right]}      
\newtheorem{lemma}{Lemma}[section]
\newtheorem{corollary}[lemma]{Corollary}
\newtheorem{theorem}[lemma]{Theorem}
\newtheorem{proposition}[lemma]{Proposition}
\newtheorem{claim}[lemma]{Claim}
\newtheorem{defn}[lemma]{Definition}
\title{Spanning trees in dense directed graphs}
\author{Amarja Kathapurkar\thanks{University of Birmingham, Birmingham, B15 2TT, UK. Email: {\tt AXK516@student.bham.ac.uk}.}\quad\quad  Richard Montgomery\thanks{University of Birmingham, Birmingham, B15 2TT, UK. Email: {\tt r.h.montgomery@bham.ac.uk}. 
Supported by the European Research Council (ERC) under the European Union Horizon 2020 research and
innovation programme (grant agreement No.\ 947978) and the Leverhulme Trust.
}}
\date{}
\begin{document}

\maketitle

\begin{abstract}
In 2001, Koml\'os, S\'ark\"ozy and Szemer\'edi proved that, for each $\alpha>0$, there is some $c>0$ and $n_0$ such that, if $n\geq n_0$, then every $n$-vertex graph with minimum degree at least $(1/2+\alpha)n$ contains a copy of every $n$-vertex tree with maximum degree at most $cn/\log n$. We prove the corresponding result for directed graphs. That is, for each $\alpha>0$, there is some $c>0$ and $n_0$ such that, if $n\geq n_0$, then every $n$-vertex directed graph with minimum semi-degree at least $(1/2+\alpha)n$ contains a copy of every $n$-vertex oriented tree whose underlying  maximum degree is at most $cn/\log n$.

As with Koml\'os, S\'ark\"ozy and Szemer\'edi's theorem, this is tight up to the value of $c$. Our result improves a recent result of Mycroft and Naia, which requires the oriented trees to have underlying maximum degree at most $\Delta$, for any constant $\Delta\in \mathbb{N}$ and sufficiently large $n$. 
In contrast to these results, our methods do not use Szemer\'edi's regularity lemma.

%
\end{abstract}

\section{Introduction}\label{sec:intro}
Given two graphs $H$ and $G$, when may we expect to find a copy of $H$ in $G$? In general, this decision problem is NP-complete, and therefore we seek simple conditions on $G$ which imply it contains a copy of $H$. An important early result is Dirac's theorem from 1952 that, when $n\geq 3$, any $n$-vertex graph with minimum degree at least $n/2$ contains a cycle through every vertex, that is, a Hamilton cycle. This is a particular instance of the following meta-question, which has seen much subsequent study. Given an $n$-vertex graph $H$, what is the lowest minimum degree condition on an $n$-vertex graph $G$ which guarantees it contains a copy of $H$? As such a copy of $H$ would contain every vertex in $G$, we say it is a \emph{spanning} copy of $H$.

This question has been studied for many different graphs $H$, for example when $H$ is a $K$-factor for some small fixed graph $K$~\cite{HajSzem,DDminfactor}, the $k$-th power of a Hamilton cycle for any $k\geq 2$~\cite{KSSpowercycle} and when $H$ has bounded chromatic number and maximum degree, and sublinear bandwith~\cite{bandwidth}. For more details on these results, and those for other graphs, see the survey by K\"uhn and Osthus~\cite{DDgraphsurvey}. Here, we will concentrate on the minimum degree required to guarantee different spanning trees.  

Koml\'os, S\'ark\"ozy and Szemer\'edi~\cite{KSS95} proved in 1995 that, for each $\alpha,\Delta>0$, there is some $n_0$ such that, if $n\geq n_0$, then every $n$-vertex graph with minimum degree at least $(1/2+\alpha)n$ contains a copy of every $n$-vertex tree with maximum degree at most $\Delta$, thus confirming a conjecture of Bollob\'as~\cite{belabook}. This result is furthermore notable as one of the earliest applications of the blow-up lemma. In 2001, Koml\'os, S\'ark\"ozy and Szemer\'edi~\cite{KSS01} relaxed the maximum degree condition, showing that, for each $\alpha>0$, there is some $c>0$ and $n_0$ such that, if $n\geq n_0$, then every $n$-vertex graph with minimum degree at least $(1/2+\alpha)n$ contains a copy of every $n$-vertex tree with maximum degree at most $cn/\log n$. This is tight up to the constant $c$. In 2010, Csaba, Levitt, Nagy-Gy\"orgy and Szemer\'edi~\cite{csaba2010tight} showed that, in the other direction, the degree bound in the graph can be reduced for trees with constant maximum degree. That is, they showed that, for each $\Delta>0$, there is some $C>0$ such that   every $n$-vertex graph with minimum degree at least $n/2+C\log n$ contains a copy of every $n$-vertex tree with maximum degree at most $\Delta$. This is tight up the constant $C$, and, moreover, unlike the previous results, did not use Szemer\'edi's regularity lemma.

In this paper, we will prove the corresponding version of this result for \emph{directed graphs (digraphs)}.

The \emph{minimum semidegree} of a digraph $D$, denoted by $\delta^0(D)$, is the smallest in- or out-degree over the vertices in $D$, that is, $\delta^0(D)=\min_{v\in V(D),\diamond\in \{+,-\}}d^\diamond(v)$.
Ghouila-Houri~\cite{GH} solved the minimum semidegree problem for the directed Hamilton cycle, showing that, if an $n$-vertex digraph $D$ has $\delta^0(D)\geq n/2$, then it contains a directed Hamilton cycle. That is, an $n$-vertex cycle with the edges oriented in the same direction.
DeBiasio, K\"uhn, Molla, Osthus and Taylor~\cite{anyori} showed that, when $n$ is sufficiently large, this holds in fact for any $n$-vertex cycle with any orientations on its edges, except for when the edges change direction at every vertex around the cycle. This latter cycle, known as the \emph{anti-directed Hamilton cycle}, is only guaranteed to appear if $\delta^0(D)\geq n/2+1$, as shown by DeBiasio and Molla~\cite{DeMol}.

Recently, Mycroft and Naia~\cite{RichardTassio,mycroft2020trees} gave the first bound on the minimum semidegree required for the appearance of different spanning trees. Here, $H$ is an oriented $n$-vertex tree, with some bound on the degree of its underlying (undirected) tree. Mycroft and Naia~\cite{RichardTassio,mycroft2020trees} proved that, for each $\alpha,\Delta>0$, there is some $n_0$ such that, if $n\geq n_0$, then every $n$-vertex digraph with minimum semidegree at least $(1/2+\alpha)n$ contains a copy of every oriented $n$-vertex tree $T$ with $\Delta^\pm(T)\leq \Delta$. Moreover, their result holds for a slightly wider class of trees, allowing them to show that, for each $\alpha>0$, almost every labelled oriented $n$-vertex tree appears in every $n$-vertex digraph with minimum semidegree at least $(1/2+\alpha)n$.
  
In this paper, we introduce new methods to embed oriented trees in digraphs, relaxing the maximum degree condition to give a full directed version of Koml\'os, S\'ark\"ozy and Szemer\'edi's result, as follows.

\begin{theorem}\label{full}
For each $\alpha>0$, there exists $c>0$ and $n_0\in \mathbb{N}$ such that the following holds for every $n \geq n_0$. Any $n$-vertex digraph $D$ with $\delta^0(D)\geq (1/2+\alpha)n$ contains a copy of every oriented $n$-vertex tree $T$ with $\Delta^\pm(T)\leq cn/ \log n$.
\end{theorem}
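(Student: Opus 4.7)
The plan is to embed $T$ into $D$ via an absorption method, without any appeal to Szemer\'edi's regularity lemma. The first step is a structural dichotomy for oriented trees: looking only at the underlying tree, one shows that $T$ either contains at least $\beta n$ leaves (Case~A), or contains $\beta n$ vertex-disjoint bare paths of length $\ell_0$ (Case~B), where $\beta=\beta(\alpha)$ and $\ell_0=\ell_0(\alpha)$ are chosen appropriately. These cases are handled by analogous but distinct arguments, and in each case the absorbing gadget is engineered to respect the orientations on the edges of $T$.

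Next I would build an absorbing structure in $D$. In Case~A, for each vertex $v\in V(D)$ I want a list of leaf-positions in $T$ that could accept $v$ in an extension of a partial embedding of the rest of $T$. The minimum semidegree hypothesis $\delta^0(D)\geq(1/2+\alpha)n$ gives, for any two vertices $u,w\in V(D)$, at least $2\alpha n$ common in-neighbours and $2\alpha n$ common out-neighbours, and this is exactly what is needed to realise absorbers consistent with the orientation of each leaf-edge. In Case~B, the gadget replaces a short bare oriented subpath of $T$ by a slightly longer directed walk that passes through an "absorbable" vertex of $D$; again the semidegree bound supplies the vertices needed to splice the paths together with the correct orientations. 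A standard probabilistic alteration argument lets us choose these absorbers so that the set of vertices each one can accept is well spread over $V(D)$.

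I would then embed the skeleton --- $T$ minus the reserved leaves or path-vertices --- by a sequential extension that processes $T$ in a carefully chosen order. Vertices of large underlying degree in $T$ must be embedded early, while their target candidate sets are still large; thereafter, each new vertex $x$ of $T$ has only a bounded number of already-embedded neighbours, so its image must lie in the intersection of a few in- and out-neighbourhoods in $D$, each of size at least $2\alpha n$ by the semidegree bound. Here is where the $\log n$ factor enters: when one makes the embedding partially random and takes a union bound over $n$ extension steps, concentration (Azuma or Chernoff on a suitable edge-revealing martingale) can tolerate maximum degree up to $cn/\log n$. Finally, the unused vertices of $D$ are matched onto the reserved absorber-positions in $T$ via a directed Hall- or Aharoni--Haxell-type matching argument, completing the embedding of $T$.

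The main obstacle is controlling the joint evolution of the candidate sets as the embedding progresses, simultaneously from the in- and the out-side. Each partially embedded vertex $x\in V(T)$ constrains its image to lie in an intersection of in- and out-neighbourhoods in $D$, and the orientation bookkeeping essentially doubles the difficulty compared with the undirected Koml\'os--S\'ark\"ozy--Szemer\'edi theorem: one must maintain, for every unembedded $x$, that \emph{both} of these intersections remain of size linear in $n$ and are spread among the remaining free vertices. A secondary difficulty is engineering the absorbers to work uniformly for every orientation pattern on the leaf edges or bare paths of $T$, which is exactly what the leaf/bare-path dichotomy is designed to enable.
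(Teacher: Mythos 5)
Your overall architecture --- absorption with a leaves/bare-paths dichotomy, martingale concentration to exploit the $cn/\log n$ degree bound, and a Hall-type matching at the end --- does match the skeleton of the paper: the paper splits $T$ into a small absorber subtree $T''$ and the rest, builds a ``switching'' absorber for $T''$ by randomly embedding most of it (using exactly your Case~A/Case~B dichotomy, with bare paths of length~6), and then embeds the large remainder by an almost-spanning theorem. So the absorption half of your plan is essentially the paper's.

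The genuine gap is in your treatment of the almost-spanning embedding, which is where the real difficulty lies. You propose to embed the skeleton ``by a sequential extension'' in which each new vertex of $T$ lands in an intersection of a few in- and out-neighbourhoods of size $\geq 2\alpha n$. This fails: the tree has up to $(1-\eps)n$ vertices, so by the time most of $T$ is embedded the free vertex set has size $O(\eps n)$, and an intersection of neighbourhoods of size $2\alpha n$ need not meet it at all; worse, a single vertex of $T$ may have $cn/\log n$ children that must \emph{all} be embedded into its image's out-neighbourhood among the dwindling free vertices. You name this obstacle (``controlling the joint evolution of the candidate sets'') but do not resolve it, and no amount of reordering by degree or union-bounding over extension steps fixes it. The paper's resolution is the missing idea: decompose $T$ into a tiny core $T_0$ plus constant-size subtrees attached by single edges or by two bare paths; pre-compute, for every $v\in V(D)$ and each sign, a \emph{guide set} $A_{v,\diamond}\subset N^\diamond(v)$ of linear size inside a random core set together with approximately regular bipartite \emph{guide graphs} into the target random sets; embed $T_0$ randomly subject to always choosing images inside the relevant guide sets; and then show via an Azuma argument (this is where $\sum_i d_i^2\leq cn^2/\log n$ is used) that the union of guide-graph neighbourhoods of the embedded core is skew-bounded, so Hall's criterion yields a perfect matching attaching all the pendant subtrees simultaneously. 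Without some substitute for this global matching mechanism, your sequential scheme does not yield a proof.
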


We note that the undirected version follows immediately from Theorem~\ref{full}. Indeed, given any $n$-vertex tree $T$ and an $n$-vertex graph $G$, we can apply Theorem~\ref{full} to a copy of $T$ with each edge oriented arbitrarily and a digraph formed from $G$ by replacing each edge $uv$ with an edge from $u$ to $v$ and an edge from $v$ to $u$. This demonstrates that, as with Koml\'os, S\'ark\"ozy and Szemer\'edi's result, Theorem~\ref{full} is tight up to the constant $c$. Furthermore, through Theorem~\ref{full} we give a new proof of the undirected result without using Szemer\'edi's regularity lemma, in contrast to the work of both Koml\'os, S\'ark\"ozy and Szemer\'edi~\cite{KSS95}, and Mycroft and Naia~\cite{RichardTassio,mycroft2020trees}, adding to the non-regularity proof for trees with constant maximum degree by Csaba, Levitt, Nagy-Gy\"orgy and Szemer\'edi~\cite{csaba2010tight} described above. Key to our result is to use a random embedding of part of the tree using `guide sets' and embedding many leaves (and small subtrees) of the tree using `guide graphs'. This replaces the regularity methods of~\cite{KSS95,RichardTassio,mycroft2020trees}, and is sketched in Section~\ref{sec:prelim}, where we also outline the rest of this paper.

\section{Preliminaries}\label{sec:prelim}
\subsection{Notation}

Let $D$ be a digraph. We denote by $V(D)$ and $E(D)$ the vertex set and edge set of $D$, respectively, where every element of the edge set of $D$ is an ordered pair of vertices. We let $\size{D} = \size{V(D)}$, which we call the \emph{size} of $D$, and let $e(D) = \size{E(D)}$. Letting $u,v \in V(D)$, if $uv \in E(D)$, then we say that $u$ is an \emph{in-neighbour} of $v$ and $v$ is an \emph{out-neighbour} of $u$.  Denote by $N^-_D(v)$ and $N_D^+(v)$, respectively, the set of all in- and out-neighbours of $v$. We let $d^-_D(v) =  \size{N_D^-(v)}$ and $d_D^+(v) = \size{N_D^+(v)}$, and we refer to these as the \emph{in-} and \emph{out-degree of $v$}, respectively. For each $\diamond \in \{+, -\}$, we let $\delta^\diamond(D)$ and $\Delta^\diamond(D)$ be, respectively, the \emph{minimum} and \emph{maximum $\diamond$-degree} of $D$. For any $A,B \subseteq V(D)$, and each $\diamond \in \{+, - \}$, let $N_D^\diamond(A,B)=\bigcup_{a \in A} (N^\diamond_D(a) \cap B)$, and let $d_D^\diamond(A,B)=\size{N_D^\diamond(A,B)}$. We omit the subscript when the graph is clear from context.  Note that, for simplicity of notation, we use `$-$' and `in' interchangeably, and, similarly, we use `$+$' and `out' interchangeably. We use `$\pm$' to represent that a property holds for both `$-$' and `$+$'. 

Suppose that $A$ and $B$ are disjoint subsets of $V(D)$. We write $D[A]$ to mean $D$ induced on the set $A$, that is, the graph obtained from $D$ by deleting all vertices which are not in $A$. For each $\diamond \in \{+, -\}$, a \emph{$\diamond$-matching} from $A$ into $B$ is a set of vertex-disjoint edges such that every edge in the set has one endpoint in $A$ and one endpoint in $B$, and the endpoint in $B$ is a $\diamond$-neighbour of the endpoint in $A$, that is, every edge is a $\diamond$-edge from $A$ into $B$. We say this matching \emph{covers $A$} if every vertex of $A$ belongs to some edge in the matching, and we call this a \emph{perfect $\diamond$-matching} if it covers both $A$ and $B$. A \emph{bare path} of length $m$ in a tree is a path with $m$ edges such that each of the internal vertices have degree 2 in the tree. When $P$ is a path in $D$, we let $D-P$ denote the subgraph of $D$ obtained by removing the internal vertices of $P$. 

For any $n \in \N$, we let $[n] := \{1, \dots, n\}$. In order to simplify notation, we use hierarchies to state our results. That is, for $a, b \in (0, 1]$, whenever we write that a statement holds for $a \ll b$ (or $b \gg a$), we mean that there exists a non-decreasing function $f\colon (0,1] \rightarrow (0,1]$ such that the statement holds whenever $a \leq f(b)$. We define similar expressions with multiple variables analogously. We say a random event occurs \emph{with high probability} if the probability of the event occurring tends to 1 as $n$ tends to infinity. In our proofs, when we have shown that a property holds with high probability, we will implicitly assume that this property holds from that point onwards. For simplicity, we ignore floors and ceilings wherever this does not affect the argument.

\subsection{Proof sketch}\label{subsec:sketch}
When $1/n\ll c\ll \alpha$, we will embed any oriented $n$-vertex tree $T$ with $\Delta^\pm(T)\leq cn/\log n$ into any $n$-vertex digraph $D$ with $\delta^0(D)\geq (1/2+\alpha)n$.
We embed $T$ using the absorption method, an approach first introduced in general by R\"odl, Ruci\'nski and Szemer\'edi~\cite{RRSab} which has been effective on a range of embedding problems for spanning graphs and digraphs (see, for example, the survey~\cite{bottsurvey}). We first partially embed a subtree $T''$ of $T$ into a set $A$ such that, given any subset $B\subset V(D)$ with $A\subset B$ and $|B|=|T''|$, we can complete this embedding of $T''$ into $D[B]$ (see Theorem~\ref{switching}).

We then use an almost-spanning embedding to embed the vertices in $V(T)\setminus V(T'')$ to extend the partial embedding of $T''$ (see Theorem~\ref{almost}). We will have chosen $T''$ so that in this stage a tree, called $T'$, is attached to an embedded vertex of $T''$. Using the property of the partial embedding of $T''$, we then complete the embedding of $T''$ with the unused vertices in $D$. The decomposition of $T$ that we need follows from a simple proposition (Proposition~\ref{littletree}).

In Section~\ref{proofoffull}, we state these three results, Theorem~\ref{switching}, Theorem~\ref{almost} and Proposition~\ref{littletree}, before deducing Theorem~\ref{full} from them. In Section~\ref{sec:sk1}, we then discuss in detail the proof of Theorem~\ref{almost}, which is the major challenge overcome by this paper.

In the rest of Section~\ref{sec:prelim}, we restate the probabilistic tools we will use, and give a basic structural decomposition of trees and some simple results on matchings. In Section~\ref{sec:almost}, we prove Theorem~\ref{almost}. In Section~\ref{sec:switching}, we prove Theorem~\ref{switching}.

\subsubsection{Main tools and deduction of Theorem~\ref{full}}\label{proofoffull}
For Theorem~\ref{full}, we will first find a suitable subtree $T''\subset T$ and a set $A\subset V(D)$ with slightly fewer than $|T''|$ vertices, so that, given any set $B$ of $|T''|$ vertices containing $A$, we can embed $T''$ in $D[B]$. Furthermore, we will ensure that some pre-specified vertex $t\in V(T'')$ is always embedded to some fixed vertex $v\in A$, as follows.

\begin{theorem}\label{switching}
Let $1/n\ll c \ll  \eps\ll \mu \ll \alpha$. Let $D$ be an $n$-vertex digraph with minimum semidegree at least $(1/2+\alpha)n$. Let $T$ be an oriented tree with $\mu n$ vertices and $\Delta^\pm(T)\leq cn/\log n$, and let $t\in V(T)$.

Then, $V(D)$ contains a vertex set $A$ with size $(\mu-\eps)n$ containing a vertex $v\in A$ such that the following holds. For any set $B\subset V(D)$ with $A\subset B$ and $|B|=\mu n$, $D[B]$ contains a copy of $T$ in which $t$ is copied to $v$.
\end{theorem}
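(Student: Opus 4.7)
We proceed by absorption. First apply Proposition~\ref{littletree} to choose a set $L = \{\ell_1,\ldots,\ell_{\eps n}\}$ of leaves of $T$ with parents $p_1,\ldots,p_{\eps n}$ (repetition allowed) such that $T_0 := T-L$ has $(\mu-\eps)n$ vertices and contains $t$. (If $T$ has too few leaves, the proposition instead supplies many disjoint bare paths, and the middle vertex of each bare path will play the same role as a leaf throughout.) The plan is to embed $T_0$ into $D$ via an embedding $\phi_0$ with $\phi_0(t)=v$, set $A := \phi_0(V(T_0))$ so that $|A|=(\mu-\eps)n$, and then show that for any $B \supset A$ with $|B|=\mu n$ we can complete $\phi_0$ by assigning the leaves of $L$ bijectively to the vertices of $Y := B \setminus A$ along appropriately oriented edges of $D$.

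The partial embedding $\phi_0$ is built by a randomised greedy process: traverse $T_0$ in breadth-first order from $t$, embedding $t$ to $v$, and at each step picking uniformly at random an unused correctly oriented neighbour of the parent's image. Since $\delta^0(D) \geq (1/2+\alpha)n$ and $|T_0|\leq \mu n \ll \alpha n$, there are always at least $\alpha n/2$ admissible choices, so the process runs to completion. Define the bipartite \emph{guide graph} $H$ on $L \cup (V(D)\setminus A)$ by joining $\ell_i$ to $x$ whenever the edge of $T$ incident to $\ell_i$ can be realised between $\phi_0(p_i)$ and $x$ in $D$, i.e.\ when $\phi_0(p_i)x\in E(D)$ or $x\phi_0(p_i)\in E(D)$ according to the orientation. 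A martingale (Azuma) concentration argument, applied once per $x\in V(D)$ and union-bounded over $V(D)$, shows that with positive probability $\phi_0$ has the following \emph{spread property}: for every $x\in V(D)$, at least $(1/2+\alpha/2)\eps n$ of the images $\phi_0(p_i)$ are correctly oriented neighbours of $x$; equivalently, on the $V(D)\setminus A$-side of $H$ every vertex has degree at least $(1/2+\alpha/2)|L|$.

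Now fix any $B \supset A$ with $|B|=\mu n$ and let $Y := B \setminus A$, so $|L|=|Y|=\eps n$, and seek a perfect matching in $H[L,Y]$. The spread property immediately yields Hall's condition from the $Y$-side. The genuine difficulty -- and the source of the word \emph{switching} in the theorem's name -- is the $L$-side condition: an adversarial $Y$ might avoid all correctly oriented $D$-neighbours of some particular $\phi_0(p_i)$. To defeat this we build extra flexibility into $\phi_0$ at the outset: for each $i$ we reserve a small pool of alternative vertices in $A$ onto which $\ell_i$ could have been placed, so that whenever a leaf $\ell_i$ is blocked by $Y$ we may re-route along a short alternating chain through these pools, reassigning some other leaf $\ell_j$ to the original target of $\ell_i$ in order to free a vertex of $Y$ for $\ell_i$. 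Iterating these switches reduces the matching problem to one in which both sides of $H[L,Y]$ have minimum degree at least $|L|/2$, and the standard Hall argument then produces the matching and completes $\phi_0$ to an embedding of $T$ into $D[B]$.

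The principal obstacle is the transition in the third paragraph from the easy $Y$-side spread condition to a perfect matching valid for \emph{every} $Y$. It requires $\phi_0$ to encode a robust-matching template, so that the switching mechanism functions simultaneously against every possible extension $B$. Fitting such a template inside a random embedding of $T_0$, using only the minimum semidegree $\delta^0(D) \geq (1/2+\alpha)n$ and the slack $\eps \ll \mu \ll \alpha$, is the core technical content of the theorem; everything else in the proof (the decomposition via Proposition~\ref{littletree}, the greedy random embedding of $T_0$, and the concluding Hall argument) is essentially routine.
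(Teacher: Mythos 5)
Your setup (split off a small piece of $T$, randomly embed the rest, let $A$ be the image, then handle an arbitrary $B\supset A$) matches the paper's, and you correctly identify where the difficulty lies: an adversarial $Y=B\setminus A$ of size only $\eps n$ can easily avoid every correctly oriented neighbour of some particular $\phi_0(p_i)$, so the $L$-side Hall condition for your guide graph $H[L,Y]$ can simply fail. But the step you offer to repair this is where the proof actually lives, and as written it does not work. ``Reserve a small pool of alternative vertices in $A$ onto which $\ell_i$ could have been placed'' is not meaningful as stated: every vertex of $A$ is already the image of some vertex of $T_0$, so re-routing $\ell_i$ into $A$ necessarily displaces an embedded tree vertex, and you never say where that vertex goes, what adjacency conditions the displacement must satisfy, or what property of the random embedding $\phi_0$ guarantees that such re-routings exist simultaneously for \emph{every} choice of $Y$. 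The concluding claim that iterated switches ``reduce the matching problem to one in which both sides of $H[L,Y]$ have minimum degree at least $|L|/2$'' is asserted, not proved, and it is exactly the content of the theorem.

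The paper's mechanism, which you would need to reconstruct, inverts your picture: the new vertices of $Y$ are not matched to leaves at all. Instead, each $y\in Y$ is swapped \emph{into} the embedded copy $R$ of $T'$ in place of an already-embedded vertex $v_j$ of low tree-degree (an embedded leaf, or the midpoint of a bare path) satisfying $N^\pm_R(v_j)\subseteq N^\pm_D(y)$, so that $y$ can legally take over $v_j$'s role in the tree; the freed vertex $v_j$, chosen also to lie in $N^{\diamond}_D(x)$ for the image $x$ of the relevant parent, then becomes the newly embedded vertex of $T''$. The property the random embedding must satisfy is therefore indexed by \emph{pairs}: for every $x,y\in V(D)$ and every sign $\diamond$, linearly many indices $j$ with $v_j\in N^\diamond_D(x)$ and $N^\pm_R(v_j)\subseteq N^\pm_D(y)$ (property \textbf{S} in the paper). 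This is established by a two-stage supermartingale/Azuma argument over the randomised greedy embedding, split into the cases ``many leaves'' and ``many bare paths of length $6$'', and it is here that $\Delta^\pm(T)\le cn/\log n$ enters, via the bound $\sum_i c_i^2\le cn^2/\log n$ on the martingale increments. Your single-sided concentration statement (degrees on the $Y$-side of $H$) is a genuinely weaker property and does not imply property \textbf{S}; without something equivalent to it, the switching step has no foundation. A smaller inaccuracy: Proposition~\ref{littletree} produces two edge-disjoint subtrees, not a set of independent leaves, and your bare-path fallback changes the completion step from a matching to a problem of inserting vertices with two prescribed neighbours, which your guide-graph formulation does not accommodate.
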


Theorem~\ref{switching} is proved in Section~\ref{sec:switching} by randomly embedding most of $T$ and taking $A$ to be the image of this embedding. We then show that the partial embedding of $T$ can be extended using any new vertex in $y\in V(D)\setminus A$ by switching $y$ into the partial embedding in place of some vertex in $A$ that can instead be used to embed a new vertex of $T$. Repeatedly doing this will allow the embedding of $T$ to be completed using any set of $|T|-|A|$ new vertices in $V(D)\setminus A$. This is sketched in more detail at the start of Section~\ref{sec:switching}, before Theorem~\ref{switching} is proved.

We will embed the majority of the tree for \cref{full}, using the following almost-spanning embedding.

\begin{theorem}\label{almost}
Let $1/n\ll c\ll\eps,\alpha$. Let $D$ be an $n$-vertex digraph with minimum semidegree at least $(1/2+\alpha)n$ and let $v\in V(D)$. Let $T$ be an oriented tree with at most $(1-\eps)n$ vertices and $\Delta^\pm(T)\leq c n/\log n$, and let $t\in V(T)$.

Then, $D$ contains a copy of $T$ in which $t$ is copied to $v$.
\end{theorem}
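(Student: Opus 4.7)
My strategy is to decompose $T$ into a ``skeleton'' subtree $T_0$ plus a forest $F$ of small appendages, embed $T_0$ via a randomised greedy procedure guided by pre-sampled ``guide sets,'' and then attach $F$ using bipartite matchings in auxiliary ``guide graphs.'' Applying Proposition~\ref{littletree}, I would choose $T_0 \subseteq T$ with $t \in V(T_0)$ and $|V(T_0)|$ just below $(1 - \eps/2)n$, so that $F := T - V(T_0)$ is a disjoint union of small components, each attached to $T_0$ by a single edge at an ``attachment vertex'' $u \in V(T_0)$. The vertices of $V(D)$ not used in embedding $T_0$ will form the pool into which $F$ is matched.

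For the skeleton, process $V(T_0)$ in breadth-first order rooted at $t$, with $t \mapsto v$. Before the embedding begins, for each $x \in V(T_0)$ sample a random guide set $G_x \subseteq V(D)$ of size $\Theta(\alpha n)$ (with $G_t = \{v\}$), chosen so that whenever $y$ is the parent of $x$ in $T_0$ with edge orientation $\diamond \in \{+,-\}$, most elements of $G_x$ have many $\diamond$-predecessors in $G_y$. When it is $x$'s turn, pick $\phi(x)$ uniformly from the vertices in $G_x \cap N^\diamond(\phi(y))$ that are not yet used. The minimum semidegree $(1/2 + \alpha)n$ keeps the candidate sets of size $\Omega(\alpha n)$ throughout, and martingale concentration combined with $\Delta^\pm(T) \leq cn/\log n$ then shows that with high probability, for every unused $w \in V(D) \setminus \phi(V(T_0))$ and every orientation $\diamond$, the number of attachment vertices $u$ whose image lies in $N^\diamond(w)$ is close to its expectation.

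This pseudorandomness drives the second stage. For each component $S \in F$ attached at $u$ by a $\diamond$-oriented edge, the set of unused vertices that could serve as the image of the root of $S$ is a large positive fraction of the unused pool. Assembling these choices into a bipartite guide graph between components of $F$ and unused vertices of $V(D)$ gives a graph satisfying a robust Hall-type condition, so an iterative matching argument — filling in each small component $S$ by a local greedy extension once its root has been placed — embeds all of $F$ and completes the copy of $T$.

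The main obstacle is the first stage: the randomised embedding of $T_0$ must be ``good'' simultaneously for every unused vertex $w$, which demands a union bound over $\sim n$ bad events. Each such event has probability roughly $\exp(-\Omega(\alpha^2 cn / \log n))$, which is $o(1/n)$ precisely because of the $\log n$ in the degree bound $cn/\log n$ — this is the same place where the undirected Koml\'os--S\'ark\"ozy--Szemer\'edi theorem is tight. The additional difficulty in the directed setting is maintaining the correct orientation ($N^+$ versus $N^-$) at every step and in every neighbourhood calculation, but this amounts to careful bookkeeping rather than a genuinely new idea beyond the undirected case.
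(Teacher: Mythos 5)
Your overall toolkit (randomised greedy embedding of a skeleton guided by pre-sampled guide sets, then matching small appendages into the leftover pool via an auxiliary bipartite graph with a robust Hall condition) is genuinely the same machinery the paper uses for the trees attached by a single edge. But there are two concrete gaps, and they are structural rather than bookkeeping.

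First, the decomposition you start from does not exist. You ask for a subtree $T_0$ with $|T_0|$ close to $(1-\eps/2)n$ such that $T - V(T_0)$ is a disjoint union of bounded-size components, each hanging off $T_0$ by a single edge; \cref{littletree} gives nothing of the sort (it splits $T$ into two edge-disjoint subtrees meeting in one vertex), and no such decomposition exists for, say, a long path or any tree with few leaves: whatever subtree you delete vertices from, the leftover pieces of a bare path are themselves long paths, not constant-sized appendages. This is exactly why the paper proves \cref{lem:decomposing_trees}, whose dichotomy (many leaves versus many bare paths, via \cref{lem:few_leaves_many_bare_paths}) forces a second type of appendage: constant-sized trees attached to the core by \emph{two} bare paths of length $2$. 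These cannot be absorbed by your single-edge matching scheme and need the separate connection argument of \cref{lem:CSaspaths} (embed the small trees first in a reserved random set, then join each one back to the core at both ends through a fresh connecting vertex). Your proposal has no mechanism for this case.

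Second, even granting your decomposition, the first stage fails quantitatively. If the skeleton has $(1-\eps/2)n$ vertices and is embedded vertex by vertex, then once more than $(1/2-\alpha)n$ vertices have been used the candidate set $G_x \cap N^{\diamond}(\phi(y)) \setminus (\text{used})$ can be empty: the semidegree bound only guarantees $|N^{\diamond}(\phi(y))| \geq (1/2+\alpha)n$, so it does \emph{not} ``keep the candidate sets of size $\Omega(\alpha n)$ throughout'' an almost-spanning greedy embedding. The paper sidesteps this by making the greedily embedded core tiny ($|T_0| \leq \eta n$ in \cref{lem:decomposing_trees}, embedded into a set $V_0$ much larger than the core in \cref{lem-skew-3}), and by routing the bulk of the tree's vertices through near-perfect matchings (\cref{prop:matching_between_sets}, \cref{lem:linearly_many_copies_of_a_tree}), which, unlike greedy embedding, can consume almost all of a vertex set. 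You would need to restructure your argument so that only an $o(n)$-sized portion of $T$ is ever embedded one vertex at a time.
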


Using in addition the following simple proposition (see, for example,~\cite[Proposition 3.22]{montgomery2019}), we can now deduce \cref{full}.

\begin{proposition}\label{littletree} Let $n,m\in \N$ satisfy $1\leq m\leq n/3$. Given any $n$-vertex tree~$T$, there are two edge-disjoint trees $T_1,T_2\subset T$ such that $E(T_1)\cup E(T_2)=E(T)$ and $m\leq |T_2|\leq 3m$.
\end{proposition}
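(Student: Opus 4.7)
The plan is to root $T$ at an arbitrary vertex $r$ and look for a suitable ``pendant subtree'' $T_2$ of $T$: by this I mean a subtree consisting of some vertex $v$ together with the full subtrees rooted at a chosen subset of $v$'s children. For any such $T_2$, the subgraph $T_1$ of $T$ on the remaining edges satisfies $V(T_1)=(V(T)\setminus V(T_2))\cup\{v\}$; it is automatically connected because $T$ is, shares only the vertex $v$ with $T_2$, and is edge-disjoint from it. So the proposition reduces to finding a pendant subtree with $|T_2|\in[m,3m]$.

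For each $v\in V(T)$ let $s(v)$ denote the number of vertices in the subtree $T_v$ of $T$ rooted at $v$. I would start at $r$ (for which $s(r)=n\geq 3m$) and descend greedily, at each step moving to a child with $s\geq m$ whenever one exists. Since $s$ strictly decreases along this walk, it terminates at some vertex $v$ for which $s(v)\geq m$ but every child $c$ of $v$ satisfies $s(c)<m$.

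If $v\neq r$ and $s(v)\leq 3m$, then $T_2:=T_v$ already has $|T_2|=s(v)\in[m,3m]$ and we are done. Otherwise ($v=r$ or $s(v)>3m$), I enumerate the children $c_1,\ldots,c_d$ of $v$ and form $T_2$ by starting with $\{v\}$ and adding the subtrees $T_{c_i}$ one by one, halting as soon as $|T_2|\geq m$. Since $s(c_i)<m$ for every $i$, each addition increases $|T_2|$ by at most $m-1$, so at the halting moment $|T_2|\leq m+(m-1)=2m-1\leq 3m$. The halt occurs strictly before all children are absorbed, for if all were absorbed we would have $|T_2|=s(v)$, which is either $n\geq 3m$ (when $v=r$) or exceeds $3m$ (the other subcase), contradicting $|T_2|\leq 2m-1$. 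Hence $v$ retains a $T$-neighbour outside $V(T_2)$, so the constructed $T_2$ is a genuine pendant subtree.

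The main obstacle anticipated in this proof is that $s$ can drop discontinuously along the descent --- for a star, every non-root vertex has $s=1$ --- so one cannot expect to hit the window $[m,3m]$ simply by choosing $T_v$ for some $v$. The essential idea is therefore to allow $T_2$ to be a pendant subtree formed from $v$ together with a carefully chosen subset of its children's subtrees, at which point the inequality $s(c_i)<m$ guaranteed by the descent makes greedy selection succeed. Minor boundary cases such as $m=1$ (in which the walk runs all the way to a leaf and produces $T_2=\{v\}$) are handled by the same procedure without modification.
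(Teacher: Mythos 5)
Your argument is correct: the greedy descent to a vertex $v$ with $s(v)\geq m$ but $s(c)<m$ for every child $c$, followed by greedily collecting children's subtrees until the size first reaches $m$ (so that it is at most $2m-1\leq 3m$), is exactly the standard proof of this decomposition. The paper does not prove Proposition~\ref{littletree} itself but cites it from an external reference, and your self-contained proof matches the usual argument given there, including the correct handling of the case where $T_2$ must be assembled from several small child-subtrees rather than taken as a single $T_v$.
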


\begin{proof}[Proof of \cref{full} from \cref{switching,almost}]
Let $\eps,\mu>0$ be such that $c \ll \eps \ll \mu \ll \alpha$. Let $D$ be an $n$-vertex digraph with $\delta^0(D)\geq (1/2+\alpha)n$. Let $T$ be an oriented $n$-vertex tree with $\Delta^\pm(T)\leq cn/\log n$.

Using \cref{littletree} with $m = \mu n$, find edge-disjoint trees $T',T''\subset T$ such that $E(T')\cup E(T'')=E(T)$ and $\mu n\leq |T''|\leq 3\mu n$. Let $t$ be the vertex which is in both $T'$ and $T''$.
By \cref{switching} applied with $\mu'=|T''|/n$, there is a set $A\subset V(D)$ such that $\size{A} = |T''|- \eps n$, and a vertex $v \in A$ such that, for any set $B \subset V(D)$ with $A \subset B$ and $\size{B} = |T''|$, $D[B]$ contains a copy of $T''$ in which $t$ is copied to $v$.

Let $D' = D-(A \setminus \{v\})$. Let $n'=\size{D'}$, so that $(1-3\mu) n\leq n'\leq n$. Let $\alpha'$ be such that $D'$ has minimum semidegree $(1/2+\alpha')n'$. Note that $(1/2+\alpha')n\geq (1/2+\alpha')n'\geq (1/2 + \alpha-3\mu) n$, so that $\alpha'\geq \alpha/2$.
Furthermore, $n'=n-|T''|+\eps n+1= |T'|+\eps n$, and therefore
\[
\frac{|T'|}{n'}= \frac{|T'|}{|T'|+\eps n}\leq \frac{|T'|}{|T'|(1+\eps)}\leq 1-\eps/2.
\]
Thus, by \cref{almost}, we can find a copy, $S'$ say, of $T'$ in $D'$ in which $t$ is copied to $v$. By applying the property of $A$ from \cref{switching}, we can then find a copy of $T''$ in $D-(V(S')\setminus \{v\})$ in which $t$ is copied to $v$. Together, these give us a copy of $T$.
\end{proof}

\subsubsection{Proof Sketch of Theorem~\ref{almost}}\label{sec:sk1}
We will embed a $(1-\eps)n$-vertex tree $T$ for Theorem~\ref{almost} by dividing most of $T$ into a small core forest $T_0\subset T$ and a collection of constant-sized subtrees, which are either attached to $T_0$ by a single edge or by two short paths. It is the trees attached to $T_0$ by a single edge that will be the most challenging to embed, and so we dedicate most of our attention in the proof sketch to this.

More precisely, we will find a tree $T'\subset T$, containing a \emph{core forest} $T_0\subset T'$ and vertex-disjoint trees $S_1,\ldots,S_\ell\subset T'-V(T_0)$, for some $\ell\in \mathbb{N}$, such that $T'$ is formed from $T_0$ by, for each $i\in [\ell]$,
\begin{enumerate}[label = (\arabic{enumi})]   
    \item \label{item:case_1}either adding $S_i$ to $T_0$ using two bare paths with length 2,
    \item \label{item:case_2} or adding $S_i$ to $T_0$ with a single edge.
\end{enumerate}
Furthermore, for some $\mu>0$ and $K\in \N$, with  $1/n\ll 1/K,\mu\ll \alpha,\eps$, we will have that
\begin{itemize}
    \item $|T_0|\leq \mu n$ (i.e., $T_0$ is small),
    \item $|T'|\geq |T|-\mu n$ (i.e., $T'$ is most of $T$),
    \item there are at most $\mu n$ trees $S_i$ which are in Case (1), and
    \item each tree $S_i$ has at most $K$ vertices.
\end{itemize}

In Case \ref{item:case_1}, we say $S_i$ is added to $T_0$ as a path, and in Case \ref{item:case_2} we say $S_i$ is added to $T_0$ as a leaf. 
The crux of our method is to embed $T_0$ along with the trees $S_i$ in Case \ref{item:case_2} connected to the embedding of $T_0$ by the appropriate edge. This is encapsulated in the following lemma, which is proved in Section~\ref{sec:csstars}.

\begin{lemma}\label{lem:CSstars} Let $1/n\ll c\ll \mu\ll \alpha,\eps$, let $c\ll 1/K$ and let $\ell\in \mathbb{N}$.  Suppose $D$ is an $n$-vertex digraph with $\delta^0(D)\geq (1/2+\alpha)n$ and $v \in V(D)$.

Suppose that $T$ is an oriented tree with $|T|\leq (1-\eps)n$ and $\Delta^\pm(T)\leq cn/\log n$. Suppose that $T',S_1,\ldots S_\ell \subset T$ are vertex-disjoint subtrees with $|T'|\leq \mu n$, and $|S_i|\leq K$ for each $i\in [\ell]$. Suppose that $T$ is formed from $T'$ by attaching each $S_i$, $i\in [\ell]$, to $T'$ by an edge. Finally, let $t\in V(T')$.

Then, $D$ contains a copy of $T$ in which $t$ is copied to $v$.
\end{lemma}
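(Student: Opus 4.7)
The plan is a two-stage embedding: first embed the small core tree $T'$ into $D$ with $t\mapsto v$ via a random greedy scheme that leaves the image well-spread; then embed each of the $\ell$ constant-sized subtrees $S_1,\dots,S_\ell$ into the remaining vertices, coordinating the many simultaneous embeddings via a matching argument combined with a random buffer reservation.

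For stage one, I would first reserve a uniformly random subset $R\subset V(D)$ of size $\lfloor\eps n/4\rfloor$ as a buffer; by Chernoff, with high probability every vertex of $V(D)$ has at least $(1/2+\alpha/2)|R|$ $\diamond$-neighbours in $R$ for both $\diamond\in\{+,-\}$. Since $|T'|\le\mu n\ll\alpha n$ and $\Delta^\pm(T)\le cn/\log n$, I embed $T'$ greedily into $V(D)\setminus R$ in BFS order from $t\mapsto v$, placing each new vertex uniformly at random among the (at least $\alpha n/4$) valid $\diamond$-neighbours of its already-embedded parent. A standard Azuma--Hoeffding bound on the associated Doob martingale then yields the \emph{well-spread} property: $|N^\diamond(w)\setminus\phi(V(T'))|\ge(1/2+\alpha/2)n$ for every $w\in V(D)\setminus\phi(V(T'))$ and each $\diamond$. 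Writing $U=V(D)\setminus\phi(V(T'))$, I pass to stage two with $R\subset U$.

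For stage two, for each $i\in[\ell]$ let $r_i\in V(T')$, $s_i\in V(S_i)$, and $\diamond_i\in\{+,-\}$ describe the attachment edge, so that I need $\phi(s_i)\in N^{\diamond_i}(\phi(r_i))\cap U$. Since $|S_i|\le K$, the $S_i$ partition into $\tau(K)=O_K(1)$ rooted-and-oriented isomorphism types, and I handle each type class $I_\sigma$ in turn. For a fixed $\sigma$, I first find pairwise distinct `roots' $\{u_i\}_{i\in I_\sigma}\subset U\setminus R$ with $u_i\in N^{\diamond_i}(\phi(r_i))$ via Hall's theorem on the bipartite graph with parts $I_\sigma$ and $U\setminus R$; the crucial input is that each $\phi(r)$ receives at most $\Delta^\pm(T)\le cn/\log n$ attachments of any fixed type, so any $J\subseteq I_\sigma$ has support $\{\phi(r_i):i\in J\}$ of size at least $|J|\log n/(cn)$, and, combined with the well-spread property and the expansion of unions of $\diamond$-neighbourhoods in a digraph of high semidegree, one obtains $|N(J)|\ge|J|$. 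Having fixed the roots, I then embed each body $S_i\setminus\{s_i\}$ greedily rooted at $u_i$, processing the $S_i$ sequentially.

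The main obstacle is the greedy body-embedding step: once $U\setminus R$ is nearly used up, a required $\diamond$-neighbour may fail to exist there, since $(1/2+\alpha)n$ need not exceed $(1-\eps)n$. The buffer $R$ resolves this: whenever a greedy step stalls, I draw the next vertex from $R$, which retains at least $(1/2+\alpha/2)|R|$ $\diamond$-neighbours for every embedded vertex. Since the total vertex usage is at most $|T|\le(1-\eps)n$ while $|V(D)|=n$, the buffer cannot be exhausted, and the algorithm terminates with the required copy of $T$ in which $t$ is copied to $v$. The most delicate part of the whole plan is the Hall-type verification above, which may in the actual proof require a bespoke expansion lemma for neighbourhood unions in dense digraphs; everything else is bookkeeping with standard concentration.
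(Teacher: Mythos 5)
The critical step of your plan --- verifying Hall's condition for assigning the roots $u_i$ --- has a genuine gap, and it is precisely the point the whole lemma turns on. You need, for every $J\subseteq I_\sigma$, that $\bigl|\bigcup_{i\in J}N^{\diamond_i}(\phi(r_i))\cap(U\setminus R)\bigr|\ge |J|$. The degree bound $\Delta^\pm(T)\le cn/\log n$ does force the support $\{\phi(r_i):i\in J\}$ to have size at least $|J|\log n/(cn)$, but that support still has size at most $|T'|\le\mu n$, and in a digraph of minimum semidegree $(1/2+\alpha)n$ the union of the $\diamond$-neighbourhoods of $\mu n$ vertices is only guaranteed to have size about $(1/2+\alpha)n$: all of these neighbourhoods can essentially coincide with a fixed set of that size. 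Meanwhile $|J|$ can be as large as $(1-\eps)n$ (take $T'$ a path on $\mu n$ vertices, each carrying $cn/\log n$ out-leaves), which exceeds $(1/2+\alpha)n$ for small $\eps$. So no ``expansion lemma for neighbourhood unions'' of the kind you hope for can be true at the required strength; the ``well-spread'' property you establish in stage one concerns degrees into the \emph{complement} of the image and does not control the union of out-neighbourhoods of the image. The obstruction is not a property of $D$ alone but of \emph{where} $T'$ is embedded: a bad (even a generic greedy) embedding can place all the attachment vertices so that their out-neighbourhoods nearly coincide.

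This is exactly what the paper's guide sets and guide graphs are for. There, each vertex $t_i$ of the core is embedded uniformly at random into a prescribed guide set $A_{v,\diamond}$ which carries approximately biregular bipartite guide graphs into the target sets; consequently the auxiliary bipartite graph from the attachment points (with multiplicity $d_{j,i}$) to the target set $V_j$ is, with high probability, \emph{skew-bounded} (out-degrees at least $\mu p_j n$ on one side, in-degrees at most $\mu p_j n$ on the other, proved via an Azuma bound on a supermartingale using $\sum_i d_{j,i}^2\le cn^2/\log n$), and Hall's condition then follows by double counting rather than by neighbourhood expansion. Your buffer idea for finishing the constant-sized bodies is fine and mirrors the slack the paper builds into its random sets, but without something playing the role of the guide graphs the root-assignment step does not go through.
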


We will now briefly sketch how Theorem~\ref{almost} can be proved from Lemma~\ref{lem:CSstars}. Let $m$ be the total number of vertices that appear in the trees $S_i$ in Case (1) above. To embed these trees, we use the fact that two random sets in $D$ of the same (linear) size are likely to have a perfect matching from one to the other (see Proposition~\ref{prop:matching_between_sets}). Taking $p\gg 1/n$ and $Kp\leq 1$, we can, with high probability, find $pn$ copies of an oriented tree with $K$ vertices in a random set of $Kpn$ vertices in $D$ by taking randomly $K$ disjoint subsets within this set of size $pn$ and finding appropriate matchings between them (see Section~\ref{sec:match}). Collecting isomorphic trees $S_i$ together, and applying this to each of the constantly many (depending on $K$) isomorphism classes, allows us to embed the trees $S_i$ in Case (1) with high probability in a random set with size $m+\eps n/4$. Here, the extra $\eps n/4$ vertices allow us to find a linear number of trees in each isomorphism class by finding some additional trees if required.

Thus, in a partition of $V(D)$ into sets $V_1\cup V_2\cup V_3\cup V_4$ chosen uniformly at random so that $|V_1|=n-m-3\eps n/4$, $|V_2|=m+\eps n/4$, $|V_3|=|V_4|=\eps n/4$, with high probability, the following occur.
\begin{itemize}
\item $\delta^{\pm}(D[V_1])\geq (1/2+\alpha/2)|V_1|$, so that, applying Lemma~\ref{lem:CSstars}, we can embed $T_0$ along with the trees $S_i$ in Case \ref{item:case_2} connected to the embedding of $T_0$ by the appropriate edge. 
\item We can embed the trees $S_i$ in Case \ref{item:case_1} in $D[V_2]$.
\item  Then, using that there are at most $\mu n$ trees in Case \ref{item:case_1}, we can greedily attach them to the embedding of $T_0$ using two paths with length 2 whose interior vertex is an unused vertex in $V_3$ (see Section~\ref{sec:CSaspaths}).
\item Finally, as $|T|-|T'|\leq \mu n$, we can greedily extend the resulting embedding of $T'$ to one of $T$, by adding a sequence of leaves using vertices in $V_4$ (see Section~\ref{sec:proofofalmost}).
    \end{itemize}
Here, the last two steps are (with high probability) possible using the semi-degree condition of $D$. Note that, as $\mu\ll \eps$, we only embed a small proportion of vertices into $V_3$ and $V_4$.

We will now give a detailed proof sketch of Lemma~\ref{lem:CSstars}.

\subsubsection*{Proof sketch of Lemma~\ref{lem:CSstars}}
To simplify our discussion, let us assume that each tree $S_i$ in Lemma~\ref{lem:CSstars} consists of only a single vertex, which is an out-neighbour in the tree $T$ of a vertex of $T_0$, and that every vertex in $T_0$ is attached to exactly one such tree. That is, $T$ consists of $T_0$ with an out-matching attached. 
Our embedding of $T_0$ is randomised, which will allow the methods described to be used to find matchings attached from different subsets of the image of the embedding of $T_0$ to different random sets. This will allow the embedding below for $T_0$ to be used for the general case.

Let us detail the example situation precisely. Suppose we have a $\mu n$-vertex tree $T_0$ and choose two disjoint random sets $V_0,V_1\subset V(D)$ with size $p_0n$ and $p_1n$ respectively, where $p_0\gg \mu$ and $p_1=(1+o(1))\mu$. We will randomly embed $T_0$ into $V_0$, so that there is an out-matching from the vertex set of the embedding of $T_0$ into $V_1$. 
More generally, we may have to attach matchings into several different sets from $T_0$, so we can only use a small proportion of spare vertices in $D$ (and so $p_1$ is only a little larger than $\mu$). On the other hand, as these matchings will be all attached to the same small tree, $T_0$, we can use many spare vertices when embedding $T_0$ (and so we take $p_0\gg \mu$).



We will embed $T_0$ vertex-by-vertex, say in order $t_1,\ldots,t_\ell$, so that each new vertex is embedded as an in- or out-leaf of the previously embedded subtree. Having chosen the random sets $V_0,V_1$, and before beginning the embedding, we will find \emph{guide sets} $A_{v,\diamond}\subset  N^\diamond_D(v,V_0)$, $v\in V_0$ and $\diamond\in \{+,-\}$, which we use to guide the random embedding. We then start the random embedding, under the rule that if, for some $v\in V_0$ and $\diamond\in\{+,-\}$, we are attaching a $\diamond$-edge as a leaf to $v$, then we choose this leaf uniformly at random from the unused vertices in $A_{v,\diamond}$.

The guide sets ensure that, with high probability, there will be a matching from the embedding of $T_0$ into $V_1$. These guide sets are found using Lemma~\ref{lem-skew-2}, and they exist (with high probability for the choice of $V_0,V_1$) due to the semi-degree condition in $D$.
Essentially, for some constants $\beta,\gamma$, we find, for each $v\in V(D)$ and $\diamond\in \{+,-\}$, a set $A_{v,\diamond}\subset N^\diamond_D(v, V_0)$ with size $\beta n$ and bipartite digraphs $H^\circ_{v,\diamond}\subset D^\circ[A_{v,\diamond},V_1]$, $\circ\in \{+,-\}$, so that in $H^\circ_{v,\diamond}$ each vertex in $A_{v,\diamond}$ has around $\gamma p_1n$ $\circ$-neighbours in $V_1$, and each vertex in $V_1$ has around $\gamma \beta n$ $\circ$-edges leading into it. That is,  $H^\circ_{v,\diamond}$ is approximately regular on each side with edge density approximately $\gamma$.

We use the guide graphs $H^+_{v,\diamond}$ to find the matching from the embedding of $T_0$ by constructing an auxiliary bipartite digraph $K$ with vertex classes $\{s_1,\ldots,s_\ell\}$ and $V_\ell$. In this example situation, $K$ is a subgraph of $D$, and a matching in $K$ corresponds exactly to a matching from the image of $V(T_0)$ to $V_1$. In the more general case we attach multiple different matchings simultaneously and $K$ has vertices from the image of $V(T_0)$ copied different numbers of times (see Section~\ref{sec:almost}). The digraph $K$ does not contain  all the edges in $D$ from $\{s_1,\ldots,s_\ell\}$ to $V_\ell$. Instead, we add edges using the guide graphs so that when we have constructed $K$ it will be, with high probability, approximately regular, so that we can find our required matching via Hall's matching criterion. 

When a vertex $t_i$ is embedded using a guide set $A_{v_i,\diamond_i}$, to some vertex $s_i$ say, we add only the edges in $H^+_{v_i,\diamond_i}$ adjacent to $s_i$ to $K$ -- note that approximately $\gamma p_1n$ edges are added next to $s_i$. Note further that, as most of the vertices in $A_{v_i,\diamond_i}$ will be unused, each $w\in V_1$ will have an edge added from $s_i$ to $w$ with probability approximately
\begin{equation}\label{eq:final}
\frac{d_{H^+_{v_i,\diamond_i}}^-(w)}{|A_{v_i,\diamond_i}|}
\approx
\frac{ \gamma \beta n}{\beta n}=\gamma.
\end{equation}
When this is complete, $K$ is a bipartite digraph with vertex classes $\{s_1,\ldots,s_\ell\}$ and $V_1$. Each vertex $s_i$ will have out-degree approximately $\gamma p_1n$, and, due to the randomness of the embedding and \eqref{eq:final}, each vertex in $V_1$ will have in-degree which is approximately $\gamma \ell=\gamma|T_0|\approx \gamma p_1n$.

Thus, $K$ will be a bipartite graph with the in-degrees in one vertex class approximately equal to the out-degrees in the other. Via Hall's matching criterion, an out-matching will exist from $\{s_1,\ldots,s_\ell\}$ to $V_1$ which covers most of the vertices in $\{s_1,\ldots,s_\ell\}$. By ensuring that $V_1$ is likely to be a little larger than $\ell$, we in fact will get with high probability that such an out-matching can cover $\{s_1,\ldots,s_\ell\}$.

Note that, in the sketch above, we do not use the graph $H_{v,\diamond}^-$. However, in practice, we find such guide sets and guide graphs with $V_1=V(D)\setminus V_0$ (see Lemma~\ref{lem-skew-1}), before taking random subsets of $V_1$. We will find out-matchings into some of these random sets, and in-matchings into some others. Therefore, it is important to have both guide graphs $H^-_{v,\diamond}$ and $H^+_{v,\diamond}$, and, furthermore, that the same set $A_{v,\diamond}$ is used for both graphs.

Finally, let us note where the condition $\Delta^\pm(T)\leq cn/\log n$ is used in our proof of Lemma~\ref{lem:CSstars}. In the sketch above the set $V_1$ will always have size which is linear in $n$, but we may need to attach the trees in Lemma~\ref{lem:CSstars} to few vertices in $T$. The maximum in- or out-degree condition on $T$ ensures, that, if the trees $S_i$ in Lemma~\ref{lem:CSstars} together comprise linearly (in $n$) many vertices in $T$, then they are attached to at least $C\log n$ different vertices, for some large constant $C$, which gives us sufficient probability concentrations when these vertices are randomly embedded for the corresponding versions of Hall's criterion to hold (see the proof of Claim~\ref{claimembed}).

\subsection{Probabilistic tools}
Let $n, m, k \in \N$ be such that $\max\{m, k\} \leq n$. Let $A$ be a set of size $n$, and $B \subseteq A$ be such that $\size{B} = m$. Let $A'$ be a uniformly random subset of $A$ of size $k$. Then the random variable $X = \size{A' \cap B}$ is said to have hypergeometric distribution with parameters $n,m$ and $k$, which we denote by $X \sim \textup{Hyp}(n,m,k)$. We will use the following Chernoff-type bound.
\begin{lemma}[see, for example,  \cite{janson2011random}]\label{lem:chernoff}
Suppose $X \sim \textup{Hyp}(n,m,k)$. Then for any $0 < \alpha < 3/2$, we have
\[
    \Prob\lsb \size{ X - \E[X] } \geq \alpha \E[X] \rsb \leq 2 \exp\lb - \alpha^2 \E[X]/ 3 \rb.
\]
\end{lemma}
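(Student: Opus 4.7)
The plan is to reduce the hypergeometric tail estimate to the corresponding Chernoff bound for the binomial distribution. Writing $X = \sum_{i=1}^{k} Y_i$ with $Y_i = \mathbf{1}[a_i \in B]$, where $a_1, \dots, a_k$ are the elements of $A'$ listed in the order they are drawn, each $Y_i$ is marginally $\textup{Bernoulli}(m/n)$, giving $\E[X] = km/n$. The obstacle, of course, is that the $Y_i$ are not independent, since sampling is without replacement.

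The key input I would use is Hoeffding's convex-domination inequality: if $X \sim \textup{Hyp}(n,m,k)$ and $Z \sim \textup{Bin}(k, m/n)$, then $\E[\varphi(X)] \leq \E[\varphi(Z)]$ for every continuous convex function $\varphi$. Applied with $\varphi(x) = e^{tx}$, this yields
\[
\E[e^{tX}] \leq \E[e^{tZ}] = \lb 1 + \tfrac{m}{n}(e^t-1)\rb^{k} \leq \exp\lb \E[X](e^t-1)\rb,
\]
the usual Poisson-type moment generating function bound. An alternative route is to observe that the indicators $Y_1, \dots, Y_k$ are negatively associated (a standard property of sampling without replacement), after which the MGF factorises as an inequality in the same way as for independent Bernoullis.

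From the MGF bound, the remainder is the routine Chernoff argument. Markov's inequality applied to $e^{tX}$ gives, for each $t > 0$,
\[
\Prob\lsb X \geq (1+\alpha)\E[X] \rsb \leq \exp\lb \E[X]\lb e^t - 1 - (1+\alpha)t \rb \rb,
\]
and optimising with $t = \log(1+\alpha)$ produces the bound $\exp\lb -\E[X]\lb (1+\alpha)\log(1+\alpha) - \alpha \rb \rb$. This is at most $\exp(-\alpha^2 \E[X]/3)$ by the elementary estimate $(1+\alpha)\log(1+\alpha) - \alpha \geq \alpha^2/3$, valid for $0 < \alpha \leq 3/2$. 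The lower tail is symmetric, using $t < 0$ together with $(1-\alpha)\log(1-\alpha) + \alpha \geq \alpha^2/3$, and a union bound between the two tails produces the factor of~$2$ in the statement (note that, as written, the exponent in the lemma should carry a minus sign, which will fall out of this argument automatically). The only real subtlety is justifying Hoeffding's convex-domination inequality (or negative association) cleanly; I would simply cite Hoeffding's 1963 paper or the textbook treatment in \cite{janson2011random} already referenced in the statement, rather than reprove it here.
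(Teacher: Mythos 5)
The paper gives no proof of this lemma---it is quoted directly from \cite{janson2011random}---and your argument is exactly the standard one found there (Hoeffding's convex-domination inequality to pass from the hypergeometric to the binomial moment generating function, Markov's inequality with $t=\log(1+\alpha)$, and the elementary bound $(1+\alpha)\log(1+\alpha)-\alpha\geq \alpha^2/3$ valid on $(0,3/2]$), so it is correct; you are also right that the exponent in the statement as printed is missing a minus sign. The one small point worth patching is the lower tail when $1\leq \alpha<3/2$, where the inequality $(1-\alpha)\log(1-\alpha)+\alpha\geq\alpha^2/3$ no longer applies: there the event $X\leq(1-\alpha)\E[X]$ forces $X=0$, and $\Prob[X=0]\leq \E[e^{tX}]\leq e^{-\E[X]}\leq e^{-\alpha^2\E[X]/3}$ since $\alpha^2/3<1$, so the bound still holds.
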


 A sequence of random variables  $(X_i)_{i \geq 0}$ is a martingale if $\E[X_{i+1} \mid X_0, \dots, X_i] = X_i$ for each $i\geq 0$. We will use the following Azuma-type bound for martingales.

\begin{lemma}[see, for example, \cite{alon2004probabilistic}]\label{lem:azuma}
Let $(X_i)_{i \geq 0}$ be a martingale and let $c_i>0$ for each $i\geq 1$. If $\size{X_i -X_{i-1}} <c_i$ for each $i\geq 1$, then, for each $n\geq 1$,
\[
\Prob[\size{X_n-X_0} \geq t ] \leq 2 \exp \lb -\frac{t^2}{\sum_{i=1}^nc_i^2} \rb.
\]
\end{lemma}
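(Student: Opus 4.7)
The plan is the classical exponential-moment (Chernoff) argument applied to the martingale difference sequence. First I would isolate the key ingredient, Hoeffding's lemma: if $Z$ is a real random variable with $\E[Z]=0$ and $|Z|\leq c$, then $\E[e^{\lambda Z}]\leq e^{\lambda^2 c^2/2}$ for every $\lambda\in\mathbb{R}$. This follows from the convexity of $x\mapsto e^{\lambda x}$ on $[-c,c]$: the chord bound gives $e^{\lambda Z}\leq \frac{c+Z}{2c}e^{\lambda c}+\frac{c-Z}{2c}e^{-\lambda c}$, so taking expectations (using $\E[Z]=0$) reduces matters to the elementary inequality $\tfrac12(e^{\lambda c}+e^{-\lambda c})\leq e^{\lambda^2 c^2/2}$, which can be verified by comparing Taylor coefficients.

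Next I would set $Y_i=X_i-X_{i-1}$, so that by the martingale property $\E[Y_i\mid X_0,\ldots,X_{i-1}]=0$ and by hypothesis $|Y_i|<c_i$. Applying Hoeffding's lemma conditionally yields $\E[e^{\lambda Y_i}\mid X_0,\ldots,X_{i-1}]\leq e^{\lambda^2 c_i^2/2}$. Chaining via the tower property,
\[
\E\lsb e^{\lambda(X_n-X_0)}\rsb = \E\lsb e^{\lambda(X_{n-1}-X_0)}\cdot \E[e^{\lambda Y_n}\mid X_0,\ldots,X_{n-1}]\rsb \leq e^{\lambda^2 c_n^2/2}\cdot\E\lsb e^{\lambda(X_{n-1}-X_0)}\rsb,
\]
and iterating this estimate over $i=n, n-1, \ldots, 1$ gives $\E[e^{\lambda(X_n-X_0)}]\leq \exp\lb \tfrac{\lambda^2}{2}\sum_{i=1}^n c_i^2\rb$.

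Finally, Markov's inequality gives, for each $\lambda>0$,
\[
\Prob\lsb X_n-X_0\geq t\rsb \leq e^{-\lambda t}\,\E\lsb e^{\lambda(X_n-X_0)}\rsb \leq \exp\lb -\lambda t+\tfrac{\lambda^2}{2}\sum_{i=1}^n c_i^2\rb,
\]
which is optimised at $\lambda=t/\sum_{i=1}^n c_i^2$, producing the desired exponential upper tail. Applying the same argument to the martingale $(-X_i)_{i\geq 0}$ and union-bounding over the two tails yields the two-sided bound, absorbing the factor of $2$. The only step that requires genuine care is Hoeffding's lemma; the remainder is a routine chaining of conditional exponential bounds, so I do not expect any real obstacle beyond bookkeeping with the constants in the final exponent.
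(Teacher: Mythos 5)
The paper does not prove this lemma at all --- it is quoted from Alon--Spencer --- so there is no in-paper argument to compare against; your proposal is precisely the standard exponential-moment proof (conditional Hoeffding lemma, tower-property chaining, Markov, optimise $\lambda$, union bound over the two tails), and every step of it is sound. However, you should be more careful at the final optimisation: with $S=\sum_{i=1}^n c_i^2$, taking $\lambda=t/S$ in $\exp\lb-\lambda t+\tfrac{\lambda^2}{2}S\rb$ gives $\exp\lb-t^2/(2S)\rb$, not $\exp\lb-t^2/S\rb$, so your argument establishes the bound $2\exp\lb-t^2/(2\sum_{i=1}^n c_i^2)\rb$ rather than the bound as stated in the lemma. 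This factor of $2$ is not recoverable by any refinement of your method: the statement as printed is in fact false in general (take $n=1$ and $X_1-X_0=\pm(c_1-\delta)$ with probability $\tfrac12$ each and $t=c_1-\delta$; the left side is $1$ while the right side tends to $2e^{-1}<1$ as $\delta\to 0$). So the discrepancy is a typo in the lemma statement --- the standard Azuma--Hoeffding inequality carries the extra factor $2$ in the denominator of the exponent --- and not a gap in your reasoning. All of the paper's applications (e.g.\ the estimates leading to Claims~\ref{claimembed} and~\ref{cl:P}) have polynomial room to spare in the exponent, so the corrected constant changes nothing downstream; but as written, your closing claim that the optimisation ``produces the desired exponential upper tail'' silently passes over this mismatch, and you should either prove the weaker (correct) form or note that the stated form needs the $2$.
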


We will use this bound for supermartingales and submartingales. A sequence of random variables  $(X_i)_{i \geq 0}$ is a supermartingale if $\E[X_{i+1} \mid X_0, \dots, X_i] \leq X_i$ for each $i\geq 0$, and a submartingale if $\E[X_{i+1} \mid X_0, \dots, X_i] \geq X_i$ for each $i\geq 0$. The bound on the upper tail in Lemma~\ref{lem:azuma} holds for supermartingales, while the bound on the lower tail holds for submartingales. We will always use this to bound the sum of random variables using the following simple corollary.

\begin{corollary}\label{cor:azuma}
Let $(Z_i)_{i=1}^n$ be a sequence of random variables. For each $i\in [n]$, let $a_i,c_i\in \mathbb{R}$ be constant such that $|Z_i-a_i|\leq c_i$.
\begin{enumerate}
\item\label{cor:azuma_sup} If $\E[Z_i \mid  Z_1,\ldots,Z_{i-1}]\leq a_i$, then for each $t>0$,
    \[
\Prob\left[\sum_{i=1}^nZ_i\geq \sum_{i=1}^na_i+t\right]\leq 2\exp\left(-\frac{t^2}{\sum_{i=1}^nc_i^2}\right).
\]
    \item\label{cor:azuma_sub} If $\E[Z_i \mid  Z_1,\ldots,Z_{i-1}]\geq a_i$, then for each $t>0$,
    \[
\Prob\left[\sum_{i=1}^nZ_i\leq \sum_{i=1}^na_i-t\right]\leq 2\exp\left(-\frac{t^2}{\sum_{i=1}^nc_i^2}\right).
\]

\end{enumerate}
\end{corollary}

\begin{proof}
We prove \ref{cor:azuma_sup}, and note that \ref{cor:azuma_sub} follows by applying \ref{cor:azuma_sup} to the sequence $(-Z_i)^n_{i=1}$. Let $Y_0 = 0$ and, for each $i \in [n]$, let $Y_i = \sum_{i'\in [i]}Z_{i'}-a_{i'}$. Then, $\E[Y_{i+1} \mid Y_1, \dots, Y_i] = \E[Z_{i+1} - a_{i+1} + Y_i \mid Y_1, \dots, Y_i] \leq Y_i$. Furthermore, for each $i\in [n]$, $\size{Y_{i}-Y_{i-1}} =\size{Z_{i}-a_i} \leq c_i$. Therefore, we can apply \cref{lem:azuma} for supermartingales to show that
 \[
\Prob\left[\sum_{i=1}^nZ_i\geq \sum_{i=1}^na_i+t\right] = \Prob\lsb Y_i - Y_0 \geq t \rsb\leq 2\exp\left(-\frac{t^2}{\sum_{i=1}^nc_i^2}\right).\qedhere
\]
\end{proof}

\subsection{Structural lemmas}
In this section we decompose undirected trees. Note that we will later apply this to directed trees as the edge directions do not affect the decompositions. We will use the following simple but useful lemma (see \cite[Lemma 4.1]{montgomery2018embedding}) which tells us that either a tree has many leaves, or it has many bare paths. 

\begin{lemma}\label{lem:few_leaves_many_bare_paths}
Let $t, s \geq 2$, and suppose that $T$ is a tree with at most $t$ leaves. Then there is some $m$ and some vertex-disjoint bare paths $P_i$, $i \in [m]$, in $T$ with length $s$ so that $\size{T-P_1 - \dots - P_m} \leq 6st + 2\size{T}/(s+1)$.
\end{lemma}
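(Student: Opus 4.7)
The plan is to exploit the fact that a tree $T$ with at most $t$ leaves has few branch vertices (vertices of degree $\geq 3$), hence few maximal bare paths, so that most of the vertex set of $T$ lies along long degree-$2$ stretches which can be sliced into vertex-disjoint length-$m$ bare sub-paths.

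First I would fix the structural count. Let $L \leq t$ be the number of leaves and $B$ the number of branch vertices of $T$. A standard handshake computation (using $\sum_v d(v) = 2\size{T} - 2$ together with $d(v) \geq 3$ at branch vertices and $d(v) \geq 2$ at degree-$2$ vertices) gives $B \leq L - 2$, hence $L + B \leq 2t - 2$. Contracting each maximal bare path to a single edge produces a skeleton tree on the $L + B$ non-degree-$2$ vertices of $T$; its $r := L + B - 1 \leq 2t - 3$ edges correspond bijectively to the maximal bare paths $Q_1, \dots, Q_r$ of $T$, which partition the edge set of $T$. Writing $\ell_j$ for the length of $Q_j$, one therefore has $\sum_j \ell_j = \size{T} - 1$.

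Next I would greedily pack, inside each $Q_j$, exactly $s_j := \floor{(\ell_j + 1)/(m+1)}$ vertex-disjoint bare sub-paths of length $m$, placed consecutively along $Q_j$, and take $P_1, \dots, P_s$ (with $s := \sum_j s_j$) to be their union across $j$. Since the $Q_j$ are edge-disjoint and any shared endpoints between different $Q_j$ are leaves or branch vertices of $T$ (never internal to any $P_i$), this family is vertex-disjoint, and removing the $m - 1$ internal vertices of each $P_i$ deletes exactly $(m-1)s$ distinct vertices from $T$.

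The claimed inequality now follows from a short calculation. Using $s_j \geq (\ell_j + 1)/(m+1) - 1$ together with $\sum_j (\ell_j + 1) = \size{T} - 1 + r$ and simplifying yields
\[
\size{T - P_1 - \dots - P_s} \;=\; \size{T} - (m-1)s \;\leq\; \frac{2\size{T}}{m+1} + 1 + m r \;\leq\; \frac{2\size{T}}{m+1} + 2mt,
\]
where in the last step I used $r \leq 2t - 3$ and $m \geq 2$. This is comfortably within the $6mt + 2\size{T}/(m+1)$ slack of the statement, so no substantive obstacle arises; the only mild bookkeeping point is that endpoints shared between adjacent $Q_j$ are automatically leaves or branch vertices of $T$ and therefore lie outside every $P_i$.
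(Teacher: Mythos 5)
The paper does not actually prove this lemma; it quotes it from \cite[Lemma 4.1]{montgomery2019}, and your argument is the standard one for results of this type (count the non-degree-$2$ vertices, decompose the edge set into maximal bare paths, and chop each into length-$m$ pieces). Your structural counting is correct: $B\leq L-2$ follows from $\sum_v(d(v)-2)=-2$, the maximal bare paths partition $E(T)$, there are $r=L+B-1\leq 2t-3$ of them, and your final computation $\size{T}-(m-1)s\leq 2\size{T}/(m+1)+1+mr\leq 2mt+2\size{T}/(m+1)$ is valid and comfortably inside the stated bound.

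There is, however, one genuine flaw: the family you construct need not be vertex-disjoint, and your justification for disjointness does not address the failure mode. You observe correctly that a vertex shared by two maximal bare paths $Q_j,Q_{j'}$ is a leaf or branch vertex and hence is never \emph{internal} to any $P_i$ --- but vertex-disjointness requires that no vertex at all is shared, including endpoints. Concretely, if $T$ is a spider with three legs each of length exactly $m$ meeting at a branch vertex $v$, then each leg is a maximal bare path $Q_j$ with $\ell_j=m$, your packing places $s_j=\floor{(m+1)/(m+1)}=1$ sub-path in each leg, and that sub-path is the whole leg; all three chosen paths then contain $v$. The fix is routine: pack the length-$m$ sub-paths only into the $\ell_j-1$ internal vertices of each $Q_j$ (so every vertex of every $P_i$ has degree $2$ in $T$ and lies in a unique $Q_j$, making disjointness automatic). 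This costs at most one additional lost sub-path per $Q_j$, i.e.\ $s_j\geq(\ell_j+1)/(m+1)-2$, and rerunning your computation gives $\size{T-P_1-\dots-P_s}\leq 2\size{T}/(m+1)+1+2mr\leq 2\size{T}/(m+1)+4mt$, still within the $6mt$ slack of the statement. With that repair the proof is complete.
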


We can now prove the following key lemma, in which we decompose a tree for our embedding.

\begin{lemma}\label{lem:decomposing_trees}
Let $0\ll 1/n\ll 1/K\ll 1/k \ll\eta$. Let $T$ be a tree on $n$ vertices with $t \in V(T)$. Then, $T$ contains induced subgraphs $T_0\subset T_1\subset T_2\subset T_3=T$, such that $T_2$ is a tree, and the following hold.
  \begin{enumerate}[label = \textbf{\emph{P\arabic{enumi}}}]
    \item $|T_0|\leq \eta n$ and $t \in V(T_0)$.\label{cond1}
    \item $T_1$ is formed from $T_0$ by the vertex-disjoint addition of trees, $S_v$, $v\in V(T_0)$, so that, for each $v\in V(T_0)$, $S_v-v$ is a forest consisting of trees of size at most $K$.\label{cond2}
    \item $T_2$ is formed from $T_1$ by the addition of trees with size at least $k$ and at most $K$ attached to $T_1$ with exactly two bare paths of length 2.\label{cond3}
    \item $|T_3|-|T_2|\leq \eta n$.\label{cond4}
  \end{enumerate}
\end{lemma}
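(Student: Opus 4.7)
The plan is to decompose $T$ via its classical ``core plus hanging subtrees'' structure. I root $T$ at $t$ and, for each $v \in V(T)$, let $T_v$ denote the subtree of $T$ rooted at $v$. I set $V_0 = \{v : |T_v| > K\} \cup \{t\}$, which induces a subtree $T_c \subseteq T$ containing $t$ (because the parent of any vertex in $V_0$ also lies in $V_0$). Every vertex outside $V_0$ lies in a maximal ``hanging subtree'' of size at most $K$ rooted at a non-core child of some $u \in V(T_c)$. For each $u \in V(T_c)$, let $S_u$ consist of $u$ together with these hanging subtrees at $u$'s non-core children; then $S_u - u$ is a forest of trees each of size at most $K$, and the $S_u$'s partition $V(T)$.

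The first case is immediate: if $|T_c| \leq \eta n$, I set $T_0 = V(T_c)$, $T_1 = T$ (with the $S_u$'s providing \textbf{P2}), and $T_2 = T_1 = T_3 = T$, so conditions \textbf{P1}--\textbf{P4} hold trivially with no added trees in $T_2 \setminus T_1$ and no discarded vertices in $T_3 \setminus T_2$.

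Otherwise $|T_c| > \eta n$, and I shrink $T_c$ via bare paths. The crucial observation is that $T_c$ has at most $n/K$ leaves, since each leaf $v$ of $T_c$ satisfies $|T_v| > K$ with these subtrees pairwise disjoint. I apply Lemma~\ref{lem:few_leaves_many_bare_paths} to $T_c$ with $m \approx \sqrt{K}$ to obtain disjoint bare paths $P_1, \ldots, P_s$ in $T_c$ of length $m$, with uncovered portion of $V(T_c)$ of size at most $6m(n/K) + 2|T_c|/(m+1) = O(n/\sqrt{K}) \leq \eta n / 2$ (using $1/K \ll \eta$). For each path $P_i = v_0^i v_1^i \cdots v_m^i$ I assign: the endpoints $v_0^i, v_m^i$ to $T_0$; the middle vertices $v_2^i, \ldots, v_{m-2}^i$, together with their hanging subtrees, as the added tree $A_i$ in $T_2 \setminus T_1$; and the ``shoulder'' vertices $v_1^i, v_{m-1}^i$ as internals of the two attaching bare paths of length $2$ (namely $v_0^i v_1^i v_2^i$ and $v_{m-2}^i v_{m-1}^i v_m^i$), with their hanging subtrees discarded into $T_3 \setminus T_2$ so that $v_1^i, v_{m-1}^i$ have degree $2$ in $T_2$.

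The main obstacle is controlling $|A_i| \in [k,K]$ and the total discarded mass $|T_3 \setminus T_2| \leq \eta n$ simultaneously, since both depend on hanging-subtree sizes at internal vertices of the chosen bare paths. My plan is to restrict to bare paths whose internal vertices all have hanging-subtree size at most a bound $C = O(1/\eta)$; by Markov applied to $\sum_{v \in V(T_c)} h(v) = n - |T_c| \leq n$, at most $\eta n /2$ core vertices have $h(v) > C$, so these ``heavy'' vertices can instead be included directly in $T_0$ and deleted from $T_c$, and then Lemma~\ref{lem:few_leaves_many_bare_paths} is re-applied to each component of the resulting forest (whose leaf count is controllable since each heavy vertex added to $T_0$ creates at most $\deg_{T_c}(v)$ new leaves, grouped with branch vertices of $T_c$). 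This yields $k \leq m - 3 \leq |A_i| \leq (m-3)(1 + C) \leq K$ (by the hierarchy $1/K \ll 1/k \ll \eta$ giving $K \geq (k+3)^2$ and $K \geq 1/\eta^2$), and $|T_3 \setminus T_2| \leq 2sC = O(n/(\eta\sqrt{K})) \leq \eta n$. Finally, $T_2$ is a tree since it is obtained from $T$ by removing hanging subtrees (which preserves treeness), and $t \in T_0$ can be forced by subdividing a bare path if $t$ would otherwise be interior to one.
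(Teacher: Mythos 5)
Your overall architecture matches the paper's: extract a small core, cover most of it by bare paths, use the path interiors together with their hanging subtrees as the trees added for \textbf{P3}, and discard the shoulders' hanging subtrees into $T_3\setminus T_2$. The first case, the $n/K$ bound on the leaves of $T_c$, and the treeness of $T_2$ are all fine. The gap is in how you control the hanging masses along the chosen paths. You require \emph{every} internal vertex of a used bare path to have hanging mass at most $C$, and you handle the heavy vertices by moving them into $T_0$ and re-running Lemma~\ref{lem:few_leaves_many_bare_paths} on the resulting forest. This puts two incompatible demands on $C$. The discarded mass is bounded only by $2sC\approx 2nC/m$, so \textbf{P4} forces $C\leq \eta m/2$. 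But there can be up to $n/C$ heavy vertices, and deleting a heavy degree-$2$ vertex creates up to two new leaves, so the forest you re-apply the lemma to can have $\Theta(n/C)$ leaves; the lemma's uncovered set then has size up to $6m\cdot\Theta(n/C)$, and keeping this below $\eta n$ forces $C\gtrsim m/\eta$. The conditions $C\leq\eta m/2$ and $C\gtrsim m/\eta$ cannot both hold for $\eta<1$; the same clash occurs if you instead discard every path through a heavy vertex, since each costs $\Theta(m)$ vertices in $T_0$. With your stated choice $C=O(1/\eta)$ the second demand fails: you get $\Theta(\eta n)$ new leaves and hence $|T_0|=\Omega(\sqrt{K}\,\eta n)\gg\eta n$, violating \textbf{P1}.

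The repair, which is what the paper's proof does, is to decouple the two constraints rather than enforce a single pointwise threshold. The bound $|A_i|\leq K$ should be imposed on the \emph{total} hanging mass of a path's interior directly: at most $n/K$ paths violate it, and dumping those whole paths into $T_0$ costs only $O(mn/K)\leq\eta n$. Lightness is then needed only at the \emph{two} attachment vertices per path, and these should not be the fixed vertices $v_1^i,v_{m-1}^i$: instead one searches a window of about $\eta^3 m$ vertices near each end of $P_i$ for a vertex whose hanging tree has size at most $\eta m/4$. A window can fail only if it carries hanging mass at least $\eta^4 m^2/4$, so at most $O(n/(\eta^4 m^2))$ paths fail, costing $O(n/(\eta^4 m))\leq\eta n$ in $T_0$ — this is precisely where the hierarchy $1/k\ll\eta$ is needed, and trimming each path to run between the two chosen light vertices costs only $O(\eta^3 m)$ per path. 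Your closing remarks on $T_2$ being a tree and on forcing $t\in V(T_0)$ are easily made rigorous once the above is fixed.
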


\begin{proof}
Take $\eps$ and $k'$ such that $1/K \ll \eps \ll 1/{k'} \ll 1/k$. We start by finding a subtree $T'$ of $T$ which includes $t$ and has few leaves, and is such that $T-V(T')$ is a forest of components with size at most $K$. We do this by including in $T'$ every vertex which appears on the path in $T$ from $t$ to many other vertices. That is, for each $v \in V(T)$, let $w(v)$ be the number of vertices $u\in V(T)$ whose path from $t$ to $u$ includes $v$ (in particular, $v$ is such a vertex). Let $T'$ be the subgraph of $T$ induced on all the vertices $v\in V(T)$ with $w(v) \geq K+1$. 
 
For each $v\in V(T')$, let $S_v$ be the tree containing $v$ in $T-(V(T')\setminus \{v\})$. Note that $S_v-v$ is a forest which consists of trees with at most $K$ vertices. Indeed, suppose $T''$ is a tree in $S_v - v$, and let $v'$ be the neighbour of $v$ in $T''$. Since every path from a vertex $u\in V(T'')$ to $t$ in $T$ goes through $v'$ (and then $v$), we have that $K \geq w(v') \geq |T''|$ (and, in fact, the final inequality is an equality). Thus, as $v' \notin V(T')$, $|T''|\leq K$. Observe further that, for any leaf $v$ of $T'$, $\size{S_v-v} = w(v)- 1 \geq K$, and, therefore, $T'$ can have at most $n/K \leq \eps n$ leaves.

Thus, by \cref{lem:few_leaves_many_bare_paths}, for some $m\leq n/(k'+1)$, $T'$ contains vertex disjoint bare paths $P_1,\dots,P_{m}$ with length $k'$ such that $t\notin V(P_i)$ for each $i\in [k']$ and 
\begin{equation}\label{eqn:last}
|T'-P_1-\dots-P_m|\leq 6k' \cdot \eps n + 2n/(k'+1) + k'+1 \leq \eta n/4.
\end{equation}
For each path $P_i$, $i\in [m]$, if possible, find within $P_i$ a path $P_i'$ with length at least $k'-2\eta^3k'$, such that, labelling its endvertices $x_i$ and $y_i$ the following hold.
\begin{enumerate}[label = (\roman{enumi})]
 \item For each $x_i$ and $y_i$, $\size{S_{x_i}-x_i} \leq \eta k'/4$ and $\size{S_{y_i}-y_i} \leq \eta k'/4$.
 \item Letting $Q_i$ be the component of $T-\{x_i,y_i\}$ containing $P_i'-\{x_i,y_i\}$, we have $|Q_i|\leq K$.
\end{enumerate}

Say, with relabelling, these paths are $P'_1,\ldots,P'_{m'}$. We will show that $m'\geq m-\eta n/2k'$.
Note first that the number of $i\in [m]$ with no vertices $x_i$ and $y_i\in V(P_i)$ respectively within $\eta^3k'$ of the two endvertices of $P_i$, such that each of $x_i$ and $y_i$ had a forest with at most $\eta k'/4$ vertices deleted from them, is at most $n/(\eta^3k'\cdot \eta k'/4)\leq \eta n/4k'$. Note further that the number of $i\in [m]$ with at least $K$ vertices in $V(Q_i)$ is at most $n/K\leq \eta n/4k'$. Therefore, we can find such a path $P'_i$ for all but at most $\eta n/2k'$ values of $i\in [m]$, so that $m'\geq m-\eta n/2k'$.

Let $T_0 = T[V(T') \setminus (\bigcup_{i \in [m']} V(P_i'))$]. We will show that $|T_0|\leq \eta n$. Note that, for each $i\in [m']$, $\size{V(P_i)\setminus V(P_i')}\leq 2\eta^3 k'$. Therefore, as $m\leq n/k'$,
\[
|T_0| \leq \size{T' - P'_1 - \dots - P'_{m'}}\leq |T'-P_1-\dots-P_m|+k'\cdot \eta n/2k'+ m\cdot 2\eta^3 k'\overset{\eqref{eqn:last}}\leq \eta n.
\]
Furthermore, clearly $t \in V(T_0)$, and thus \emph{\ref{cond1}} holds.

Let $T_1=T[V(T_0)\cup (\bigcup_{v\in V(T_0)}V(S_v))]$. Recall that for each $v\in V(T')$, $S_v-v$ is a forest which consists of trees with at most $K$ vertices. Therefore, \emph{\ref{cond2}} holds.

Let $T_2 = T[V(T_1)\cup (\bigcup_{i\in [m']}(\{x_i,y_i\}\cup V(Q_i)))]$, and let $T_3 = T$. Here, we obtain $T_2$ by attaching the trees $Q_i$ to vertices of $T_1$ by two bare paths of length 2, which have middle vertices given by the vertices $x_i$ and $y_i$. Since $Q_i$ contains the path $P'_i$ for every $i \in [m']$, each of these trees contain at least $k'-2\eta^3k' -2 \geq k$ vertices. On the other hand, by (ii), $|Q_i|\leq K$ for each $i\in [m']$ and so each such tree has size at most $K$. Therefore, \emph{\ref{cond3}} holds. Furthermore, the only missing vertices from $T$ are those in $S_v-v$ for each $v\in \{x_i,y_i:i\in [m']\}$, and thus $T_2$ is a tree. For each such $v$, $|S_v|\leq \eta k'/4$ by (i). Therefore, $|T_3|-|T_2|\leq (n/k')\cdot (2\eta k'/4)\leq \eta n$, and hence \emph{\ref{cond4}} holds.
\end{proof}

\subsection{Matchings between random sets}\label{sec:match}

With high probability, any random subset of vertices in the digraph in Theorem~\ref{full} satisfies a similar minimum semidegree condition, as follows.

\begin{lemma}\label{lem:degree_condition_inherited_by_random_subsets}
 Let $1/n\ll c,\alpha$, and suppose $D$ is an $n$-vertex digraph with $\delta^0(D)\geq (1/2+\alpha)n$. Let $A \subseteq V(D)$ be chosen uniformly at random subject to $\size{A}=cn$. Then, with high probability, for every vertex $v \in V(D)$, we have $\size{N_D^\pm(v,A)} \geq (1/2+\alpha/2)\size{A}$. 
\end{lemma}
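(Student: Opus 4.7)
The plan is to fix a vertex $v \in V(D)$ and a sign $\diamond \in \{+,-\}$, control $|N_D^\diamond(v, A)|$ via the hypergeometric Chernoff bound in Lemma~\ref{lem:chernoff}, and then take a union bound over the $2n$ choices of $(v,\diamond)$. Since $A$ is chosen uniformly at random among $cn$-subsets of $V(D)$, the intersection size
\[
X_{v,\diamond} := |N_D^\diamond(v) \cap A|
\]
has distribution $\mathrm{Hyp}(n, d_D^\diamond(v), cn)$, with expectation $\E[X_{v,\diamond}] = c \cdot d_D^\diamond(v) \geq c(1/2+\alpha)n$.

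First I would note that showing $X_{v,\diamond} \geq (1/2+\alpha/2)|A| = (1/2+\alpha/2)cn$ is a lower-tail event: the target is a factor at most $(1/2+\alpha/2)/(1/2+\alpha) = 1 - \alpha'$ times the expectation, where $\alpha' := \alpha/(1+2\alpha) > 0$ depends only on $\alpha$. So the event $\{X_{v,\diamond} < (1/2+\alpha/2)cn\}$ is contained in $\{|X_{v,\diamond}-\E[X_{v,\diamond}]| \geq \alpha'\E[X_{v,\diamond}]\}$. Applying Lemma~\ref{lem:chernoff} then yields
\[
\Prob\!\left[X_{v,\diamond} < (1/2+\alpha/2)cn\right] \leq 2\exp\!\left(-\frac{(\alpha')^2 \cdot c(1/2+\alpha)n}{3}\right) = 2\exp(-\Omega_{c,\alpha}(n)).
\]

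Then I would take a union bound over the $2n$ pairs $(v,\diamond)$ with $v\in V(D)$ and $\diamond\in\{+,-\}$. Using $1/n \ll c,\alpha$, the factor $4n \cdot \exp(-\Omega_{c,\alpha}(n))$ tends to $0$ as $n \to \infty$, so with high probability $|N_D^\diamond(v,A)| \geq (1/2+\alpha/2)|A|$ simultaneously for every $v\in V(D)$ and every $\diamond\in\{+,-\}$, as required.

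There is no real obstacle here; this is a routine hypergeometric concentration plus union bound. The only small point is checking that $\alpha' < 3/2$ so that Lemma~\ref{lem:chernoff} actually applies, which is immediate from the definition $\alpha' = \alpha/(1+2\alpha) < 1/2$.
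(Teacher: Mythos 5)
Your proposal is correct and follows essentially the same route as the paper: model $|N_D^\diamond(v)\cap A|$ as a hypergeometric random variable with mean at least $(1/2+\alpha)cn$, apply the Chernoff-type bound of Lemma~\ref{lem:chernoff} with relative deviation $\alpha/(1+2\alpha)$ (exactly the paper's $\tfrac{\alpha/2}{1/2+\alpha}$), and finish with a union bound over the $2n$ pairs $(v,\diamond)$. No issues.
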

\begin{proof}
Let $v$ be an arbitrary vertex of $D$ and let $A \subseteq V(D)$ be a uniformly random subset with $\size{A} = cn$. For $\diamond \in \{ +, -\}$, we let $Z^\diamond_v$ be the random variable which measures $\size{ N^\diamond(v) \cap A}$. Then $Z^\diamond_v$ has hypergeometric distribution with expectation
\[
\E[Z_v^\diamond] = \frac{\size{N^\diamond(v)} \size{A}}{n} \geq \left( \frac{1}{2} + \alpha \right)cn.
\]
Therefore, by \cref{lem:chernoff}, we have
\begin{align*}
\Prob \left[ \size{Z_v^\diamond - \E[Z_x^\diamond]} > \frac{\alpha/2}{1/2 + \alpha}(1/2 + \alpha)cn \right] &\leq 2 \exp \left(- \left(\frac{\alpha/2}{1/2 + \alpha}\right)^2\frac{(1/2 + \alpha)cn}{3} \right)\\
&= 2 \exp \left( \frac{-\alpha^2cn}{6+12\alpha} \right).  
\end{align*}
Then, applying a union bound, with probability at least $1-2n \exp \lb - \alpha^2 c n / (6+12\alpha) \rb =1-o(1)$, we have that $Z^\diamond_v \geq (1/2 + \alpha/2) \size{A}$ for each $\diamond\in \{+,-\}$ and $v\in V(D)$.
\end{proof}

The following digraph version of Hall's matching criterion implies a matching exists, as follows directly from the same result for undirected graphs.

\begin{lemma}\label{lem:halls}
Let $D$ be a bipartite digraph with vertex classes $A$ and $B$, and let $\diamond \in \{+, -\}$. Suppose that for every $S \subset A$, $\size{N_D^\diamond(S,B)} \geq \size{S}$. Then there is a $\diamond$-matching from $A$ into $B$ which covers $A$. 
\end{lemma}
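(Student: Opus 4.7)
The plan is to reduce directly to the classical (undirected) Hall's matching theorem by forgetting the edge orientations once the correct notion of neighbourhood has been recorded. Concretely, I would build an auxiliary bipartite \emph{undirected} graph $H$ with vertex classes $A$ and $B$, placing an edge $ab$ exactly when $b \in N_D^\diamond(a)$ (i.e.\ when $D$ contains a $\diamond$-edge from $a$ to $b$ in the sense of the `$\diamond$' convention). If $D$ happens to contain both a $+$-edge and a $-$-edge between the same pair $(a,b)$, only one undirected edge is created in $H$, but this causes no issue for the reduction.

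Under this construction, for every $S \subseteq A$ the undirected neighbourhood of $S$ in $H$ is, by definition, the set $N_D^\diamond(S,B)$. Hence the hypothesis of the lemma,
\[
|N_D^\diamond(S,B)| \geq |S| \quad \text{for all } S \subseteq A,
\]
translates verbatim into Hall's condition $|N_H(S)| \geq |S|$ for the bipartite graph $H$ with respect to the vertex class $A$. Applying the classical Hall's theorem to $H$ yields a matching $M$ in $H$ which covers $A$.

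Finally, for each edge $ab \in M$, choose the corresponding $\diamond$-edge of $D$ (if both orientations exist for a pair, pick either; the $\diamond$-direction is prescribed by $\diamond$ itself). The resulting edge set is a $\diamond$-matching from $A$ into $B$ which covers $A$, completing the proof. There is no real obstacle: the lemma is essentially a notational reformulation of Hall's theorem, and the reduction above is the only content.
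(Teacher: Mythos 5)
Your reduction to the classical undirected Hall's theorem via the auxiliary bipartite graph $H$ is correct, and it is precisely the argument the paper has in mind: the lemma is stated there as following "directly from the same result for undirected graphs." No issues.
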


We will refer to the condition in \cref{lem:halls} as \emph{Hall's criterion}. In combination with Lemma~\ref{lem:degree_condition_inherited_by_random_subsets}, \cref{lem:halls} shows that with high probability there is a perfect matching between a large random pair of disjoint equal-sized vertex subsets in the digraph, as follows.

\begin{proposition}\label{prop:matching_between_sets}
 Let $1/n\ll p,\alpha$, and suppose $D$ is an $n$-vertex digraph with $\delta^0(D)\geq (1/2+\alpha)n$. Let $A,B$ be chosen uniformly at random from all disjoint pairs of subsets of $V(D)$, each with size $pn$, and let $\diamond \in \{+, -\}$. Then, with high probability, there is a perfect $\diamond$-matching from $A$ into~$B$.
\end{proposition}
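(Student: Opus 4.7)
The plan is to verify Hall's criterion (\cref{lem:halls}) for the bipartite digraph between $A$ and $B$ with orientation $\diamond$, after first showing that both $A$ and $B$ inherit a good semidegree condition from~$D$. The key observation at the outset is that since $(A,B)$ is distributed uniformly over ordered disjoint pairs of size-$pn$ subsets of $V(D)$, each of $A$ and $B$ is \emph{marginally} a uniformly random subset of $V(D)$ of size $pn$, and so \cref{lem:degree_condition_inherited_by_random_subsets} applies to each of them individually.

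The first step is therefore to apply \cref{lem:degree_condition_inherited_by_random_subsets} once to $A$ and once to $B$, and to take a union bound, so that with high probability every vertex $v\in V(D)$ satisfies
\[
|N_D^\pm(v,A)|\geq (1/2+\alpha/2)pn\qquad\text{and}\qquad |N_D^\pm(v,B)|\geq (1/2+\alpha/2)pn.
\]
Condition on this event. Then, for an arbitrary non-empty $S\subseteq A$, I would verify $|N_D^\diamond(S,B)|\geq |S|$ by splitting on the size of $S$. If $|S|\leq (1/2-\alpha/2)pn$ (the small case), pick any $v\in S$ and use the inherited degree into $B$ to get $|N_D^\diamond(S,B)|\geq |N_D^\diamond(v,B)|\geq (1/2+\alpha/2)pn\geq |S|$. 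If $|S|>(1/2-\alpha/2)pn$ (the large case), I would argue that in fact $N_D^\diamond(S,B)=B$, by a dual argument: taking $\diamond=+$ for concreteness, any $w\in B\setminus N_D^+(S,B)$ would satisfy $N_D^-(w)\cap A\subseteq A\setminus S$, contradicting the inherited bound $|N_D^-(w)\cap A|\geq (1/2+\alpha/2)pn>pn-|S|=|A\setminus S|$. The $\diamond=-$ case is symmetric with the roles of in- and out-neighbours swapped.

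With Hall's criterion verified in both cases, \cref{lem:halls} delivers a $\diamond$-matching from $A$ into $B$ covering $A$, which is precisely a perfect $\diamond$-matching since $|A|=|B|$. The only mildly non-trivial step is the large-$S$ case, where the one-vertex bound for a chosen $v\in S$ is too weak, and one has to instead dualise and exploit the inherited degree condition on $A$ as viewed from vertices of $B$; everything else is a routine application of the tools already set up in the section.
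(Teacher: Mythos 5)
Your proof is correct and follows essentially the same route as the paper's: inherit the semidegree condition on both $A$ and $B$ via \cref{lem:degree_condition_inherited_by_random_subsets}, then verify Hall's criterion with a case split on $\size{S}$, handling the large case by dualising to a vertex of $B$ outside $N_D^\diamond(S,B)$. The only cosmetic difference is your threshold $(1/2-\alpha/2)pn$ versus the paper's $(1/2+\alpha/2)pn$ for the case split, and both work.
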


\begin{proof}
By \cref{lem:degree_condition_inherited_by_random_subsets}, with high probability we can assume the following. For all $v \in A$, we have $\size{N^\pm(v,B)} \geq (1/2 + \alpha/2) \size{B}$, and, for all $v\in B$, we have $\size{N^\pm(v,A)} \geq (1/2 + \alpha/2) \size{A}$. We will now show that Hall's criterion holds.

Let $S \subseteq A$, such that $S\neq \emptyset$ and $\size{S} \leq (1/2 + \alpha/2) pn$, and let $x \in S$. Then, $\size{N^\diamond(S,B)} \geq \size{N^\diamond(x,B)} \geq (1/2 + \alpha/2) pn \geq \size{S}$, so Hall's condition is trivially satisfied. Now take $S \subseteq A$, $\size{S} > (1/2 + \alpha/2)pn$, and assume for a contradiction that $\size{N^\diamond(S,B)} < \size{S}$. Then in particular, $B \setminus N^\diamond(S,B) \neq \emptyset$. Take $b \in B \setminus N^\diamond(S,B)$, and let $\circ \in \{+,-\}$ be such that $\circ \neq \diamond$. We have $\size{N^\circ(b,A)} \geq (1/2 + \alpha/2)pn$. However, since $b \not\in N^\diamond(S,B)$, we have $N^\circ(b,A)\cap S = \emptyset$. So,
\[
    pn = \size{A} \geq \size{N^\circ(b,A)} + \size{S}\geq (1/2 + \alpha/2)pn + (1/2+\alpha/2)pn = (1 + \alpha)pn > pn,
\]
giving a contradiction. Thus, Hall's criterion is satisfied for all $S \subseteq A$ and so, since $\size{A} = \size{B}$, by Lemma~\ref{lem:halls}, there is a perfect $\diamond$-matching from $A$ into $B$.
\end{proof}

We use \cref{prop:matching_between_sets} to embed many vertex disjoint small trees, via the following two lemmas. In \cref{lem:linearly_many_copies_of_a_tree}, we embed linearly many copies of a given constant-sized tree into specified subsets of our digraph. In \cref{lem:linearly_many_copies_of_a_tree}, we embed a forest of constant-sized trees covering almost all the vertices in our digraph.

\begin{lemma}\label{lem:linearly_many_copies_of_a_tree} Let $1/n\ll 1/K,p,\alpha$ with $pK\leq 1$. Suppose $T$ is an oriented $K$-vertex tree containing $t\in V(T)$. Let $D$ be an $n$-vertex digraph with $\delta^0(D)\geq (1/2+\alpha)n$. Let $V_1,V_2$ be vertex disjoint subsets of $V(D)$ chosen uniformly at random subject to $\size{V_1}=pn$ and $\size{V_2}=(K-1)pn$.

Then, with high probability, $D[V_1\cup V_2]$ contains $pn$ vertex disjoint copies of $T$, in which $t$ is copied into $V_1$ in each copy of $T$.
\end{lemma}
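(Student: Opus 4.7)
The plan is to build the $pn$ copies of $T$ simultaneously by chaining together $K-1$ perfect matchings produced by Proposition~\ref{prop:matching_between_sets}, one for each non-root edge of $T$ after a rooting. Fix a labelling $t_1 = t, t_2, \ldots, t_K$ of $V(T)$ (for example, a BFS order from $t$) so that for each $i \geq 2$ the vertex $t_i$ has a unique neighbour $t_{p(i)}$ in $T$ with $p(i) < i$. For each such $i$, let $\diamond_i \in \{+, -\}$ record the orientation of the edge between $t_{p(i)}$ and $t_i$, namely $\diamond_i = +$ if $t_{p(i)} t_i \in E(T)$ and $\diamond_i = -$ if $t_i t_{p(i)} \in E(T)$.

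Partition $V_2$ uniformly at random into subsets $W_2, W_3, \ldots, W_K$, each of size $pn$, and set $W_1 := V_1$. A short check shows that for each pair $i \neq j$ in $[K]$, the pair $(W_i, W_j)$ is distributed as a uniformly random pair of disjoint subsets of $V(D)$ of size $pn$ each. In particular, for every $i \in \{2, \ldots, K\}$, Proposition~\ref{prop:matching_between_sets} applied to $(W_{p(i)}, W_i)$ yields, with high probability, a perfect $\diamond_i$-matching $M_i$ from $W_{p(i)}$ into $W_i$. Since $K$ is a constant, a union bound over the $K-1$ values of $i$ shows that with high probability all the matchings $M_2, \ldots, M_K$ exist simultaneously; assume this from now on.

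Finally, for each $v \in V_1$, define an embedding $\phi_v \colon V(T) \to V(D)$ recursively by $\phi_v(t_1) = v$ and $\phi_v(t_i) = M_i(\phi_v(t_{p(i)}))$ for $i \geq 2$, where $M_i(x)$ denotes the partner of $x$ under the matching $M_i$. By induction $\phi_v(t_i) \in W_i$, and the choice of $\diamond_i$ guarantees that the required oriented edge between $\phi_v(t_{p(i)})$ and $\phi_v(t_i)$ lies in $D$, so $\phi_v$ is indeed a copy of $T$ with $t$ sent to $v$. For distinct $v, v' \in V_1$ the images $\phi_v(V(T))$ and $\phi_{v'}(V(T))$ are disjoint: the map $v \mapsto \phi_v(t_i)$ is a bijection from $V_1$ onto $W_i$ (being a composition of perfect matchings), so $\phi_v(t_i) \neq \phi_{v'}(t_i)$, while for $i \neq i'$ the vertices $\phi_v(t_i)$ and $\phi_{v'}(t_{i'})$ lie in the disjoint sets $W_i$ and $W_{i'}$. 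This produces the desired $|V_1| = pn$ vertex-disjoint copies of $T$.

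There is essentially no serious obstacle here; the only subtlety is verifying that a random equal-sized partition of $V_2$ produces pairs $(W_i, W_j)$ with the uniform joint distribution required by Proposition~\ref{prop:matching_between_sets}, which reduces to the standard observation that sampling a random set of size $(K-1)pn$ and then sampling a random $pn$-subset of it is equivalent to sampling a random $pn$-subset of $V(D)$ and an extra disjoint $pn$-subset afterwards.
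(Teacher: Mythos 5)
Your proposal is correct and follows essentially the same route as the paper: partition $V_2$ uniformly into $K-1$ blocks of size $pn$, apply Proposition~\ref{prop:matching_between_sets} to each parent--child pair of blocks with the appropriate orientation, take a union bound over the constantly many pairs, and read off the $pn$ disjoint copies of $T$ from the union of the matchings. Your extra verification of disjointness via the bijections induced by the perfect matchings is a detail the paper leaves implicit, but there is no substantive difference.
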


\begin{proof}  Let $V_1 = U_1$, and let $U_2 \cup \dots \cup U_K$ be a partition of $V_2$ chosen uniformly at random so that $\size{U_i}=pn$ for each $i \in \{2, \dots, K\}$. Note that the distribution of any pair of sets $U_i,U_j$ with $1\leq i<j\leq K$  is that of two disjoint vertex sets with size $pn$ in $V(D)$, uniformly at random drawn from all such pairs.

Label the vertices of $T$ by $t_1, \dots, t_K$ so that $t_1=t$ and $T[\{t_1, \dots, t_i\}]$ is a tree for each $i \in \{1, \dots, K\}$.  For each $i \in \{2, \dots, K\}$, let $j_i \in \{1, \dots, i-1\}$ be such that $t_{j_i}$ is the in- or out-neighbour in $T[\{t_1, \dots, t_{i-1}\}]$ of the vertex $t_i$, and let $\diamond_i \in \{+, -\}$ be such that $t_i \in N_T^{\diamond_i}(t_{j_i})$. 

Now by \cref{prop:matching_between_sets}, for each $i \in \{2, \dots, K\}$, with high probability, we can find a $\diamond_i$-matching from $U_{j_i}$ into $U_{i}$. By applying a union bound, we see that, with high probability, for every $i \in \{2, \dots, K\}$, there is a $\diamond_i$-matching, $M_i$ say, from $U_{j_i}$ into $U_i$.

Note that the union of these matchings, $\bigcup_{2\leq i\leq K}M_i\subset D[V_1\cup V_2]$ is the disjoint union of $pn$ copies of $T$, in which, for each $i\in [K]$, the copy of $t_i$ is in $V_i$. Thus, in each of these $pn$ copies of $T$, $t=t_1$ is copied into $V_1=U_1$, as required.
\end{proof}

\begin{lemma}\label{lem:forest_of_small_trees} Let $1/n\ll 1/K,\eps,\alpha$ and suppose $F$ is a digraph with at most $(1-\eps)n$ vertices which is the disjoint union of trees with size at most $K$. Let $D$ be an $n$-vertex digraph with $\delta^0(D)\geq (1/2+\alpha)n$. Then, with high probability, $D$ contains a copy of $F$.
\end{lemma}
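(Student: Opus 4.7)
The plan is to reduce to Lemma~\ref{lem:linearly_many_copies_of_a_tree} by partitioning $F$ according to the isomorphism class of its components. Let $N_K$ be the (constant) number of isomorphism classes of oriented trees with at most $K$ vertices, and for each $i\in[N_K]$ let $T_i$ be a representative of class $i$, of size $K_i\leq K$, and let $n_i$ be the number of components of $F$ isomorphic to $T_i$, so that $\sum_i n_iK_i=|F|\leq(1-\eps)n$.

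The obstacle is that Lemma~\ref{lem:linearly_many_copies_of_a_tree} only supplies a \emph{linear} number of copies of a given tree (it requires $1/n\ll p$), so I must first arrange that every relevant isomorphism class is represented linearly often. To this end, I augment $F$ to a larger forest $F'\supseteq F$ by adding further copies of each $T_i$ until each class has exactly $n'_i:=\max\{n_i,\lceil\eps n/(2N_KK)\rceil\}$ components. Since the total number of added vertices is at most $N_K\cdot\lceil \eps n/(2N_KK)\rceil\cdot K\leq\eps n/2$, we have $|F'|\leq(1-\eps/2)n$. Any embedding of $F'$ restricts to an embedding of $F$, so it suffices to embed $F'$.

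To embed $F'$, I will partition $V(D)$ uniformly at random into disjoint subsets $W$ and $V_1^{(i)},V_2^{(i)}$ for $i\in[N_K]$, with $|V_1^{(i)}|=n'_i$ and $|V_2^{(i)}|=(K_i-1)n'_i$, leaving $|W|\geq\eps n/2$ slack. By symmetry, for each fixed $i$, the marginal distribution of the pair $(V_1^{(i)},V_2^{(i)})$ is that of a uniformly random pair of disjoint subsets of $V(D)$ of the prescribed sizes. Hence, applying Lemma~\ref{lem:linearly_many_copies_of_a_tree} with $p=n'_i/n$ (noting that $pK_i\leq|F'|/n\leq 1$ and that $1/n\ll p$ since $n'_i\geq \eps n/(2N_KK)$ is linear in $n$), with high probability $D[V_1^{(i)}\cup V_2^{(i)}]$ contains $n'_i$ vertex-disjoint copies of $T_i$, in which any designated vertex $t$ of $T_i$ is copied into $V_1^{(i)}$ (this last property is not needed here). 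A union bound over the constantly many $i\in[N_K]$ then gives, with high probability, a simultaneous embedding of all the required copies, and since the sets $V_1^{(i)}\cup V_2^{(i)}$ are pairwise disjoint, this yields a copy of $F'$, and hence of $F$, in $D$.

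The only real point of care is the marginal-distribution check for the random partition, which is immediate from symmetry; the padding construction is the key idea, converting a potentially uneven tree multiplicity profile into one to which the bulk embedding lemma directly applies.
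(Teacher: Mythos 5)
Your proposal is correct and follows essentially the same route as the paper: group the components of $F$ by isomorphism type, allocate random disjoint vertex sets of the appropriate sizes to each class, apply Lemma~\ref{lem:linearly_many_copies_of_a_tree} to each, and union bound over the constantly many classes. The only cosmetic difference is that you ensure each class is linear in size by padding the forest with extra copies, whereas the paper pads the random set sizes (taking $p_i = t_i/n + \eps/\ell s_i$); the two devices are interchangeable and the paper's own proof sketch explicitly describes your version.
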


\begin{proof}
Arrange the components of $F$ into isomorphic classes of trees $\R_1, \dots, \R_\ell$, noting that we may take $\ell \leq (2K)^{K-1}$. For each $i\in [\ell]$, let $t_i=|\R_i|$ and let $s_i$ be the size of each component in $\R_i$. Uniformly at random, take, in $V(D)$, disjoint subsets $V_{i,1}$ and $V_{i,2}$, $i\in [\ell]$, with  $\size{V_{i,1}}= p_in$ and $\size{V_{i,2}}= (s_i-1)p_i n$, where $p_i = t_i/n + \eps / \ell s_i$, for each $i\in [\ell]$. Note that this is possible, since
\[
    \sum_{i=1}^{\ell} s_i p_i n = \sum_{i=1}^{\ell} \lb  s_i t_i + \frac{\eps n}{\ell} \rb  \leq n.
\]
For each $i\in [\ell]$, we can apply \cref{lem:linearly_many_copies_of_a_tree} to show that, with high probability, there are $p_i n$ copies of the underlying tree of $\R_i$ in $D_i = D[V_{i,1} \cup V_{i,2}]$. Since $p_i n \geq t_i$, this implies that with high probability, we can find a copy of $\R_i$ in $D_i$ for each $i\in [\ell]$. By applying a union bound and using that $1/n\ll 1/\ell$, we have, with high probability, that there is a copy of $F$ in $D$.
\end{proof}
\section{Almost-spanning trees}\label{sec:almost}

The key aim of this section is to prove \cref{almost}, that is, to prove we can embed an almost-spanning tree $T$ in our digraph. By \cref{lem:decomposing_trees}, we can find $T_0 \subset T_1 \subset T_2 \subset T_3=T$, satisfying \emph{\ref{cond1}} to \emph{\ref{cond4}}. In \cref{sec:csstars}, we show that we can embed $T_1$. In \cref{sec:CSaspaths}, we show that we can embed $T_2 \setminus T_1$, and $T_3 \setminus T_2$. We conclude in \cref{sec:proofofalmost} by combining this to obtain an embedding of $T$. 

\subsection{Embedding constant-sized trees as stars}\label{sec:csstars}
As sketched in Section~\ref{subsec:sketch}, we will embed $T_0$ randomly, leaf by leaf, using a guide set to embed each new vertex. Each guide set has an accompanying guide graph, which we later use to find a matching. The property of the guide graph that we use to find the matching is that it is \emph{skew-bounded}, as follows.

\begin{defn}\label{def:skew_bounded}
A digraph $D$ with vertex sets $A,B\subset V(D)$ is \emph{$(a,b,\diamond)$-skew-bounded on $(A,B)$} if $d_D^\diamond(v,B)\geq a$ for each $v\in A$ and $d_D^\circ(v,A)\leq b$ for each $v\in B$, where $\circ\in \{+,-\}$ and $\circ\neq \diamond$.
\end{defn}

This property can imply a matching exists via Hall's criterion, as follows.

\begin{proposition}\label{skewhall}
 Let $a\geq b$ and $\diamond\in \{+,-\}$. Suppose $D$ is a digraph containing disjoint vertex sets $A,B\subset V(D)$, such that $D$ is  $(a,b,\diamond)$-skew-bounded on $(A,B)$. Then, there is a $\diamond$-matching from $A$ into $B$ in $D$ which covers $A$.
\end{proposition}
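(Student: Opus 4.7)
The plan is to verify Hall's criterion on every subset $S \subseteq A$ and then apply Lemma~\ref{lem:halls} to extract the $\diamond$-matching. Let $\circ \in \{+,-\}$ be the sign opposite to $\diamond$. Given any nonempty $S \subseteq A$, I aim to show $|N_D^\diamond(S,B)| \geq |S|$. The key observation is that the lower bound on $\diamond$-degrees from the $A$ side and the upper bound on $\circ$-degrees from the $B$ side are counting the very same set of edges (the $\diamond$-edges between $A$ and $B$), only endpointed differently, and the skew-boundedness hypothesis $a \geq b$ was designed precisely to make this double count yield Hall's condition.

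With this in mind, I would run a direct double count. Let $e$ denote the number of $\diamond$-edges of $D$ from $S$ into $B$. On the one hand, each $v\in S \subseteq A$ satisfies $d_D^\diamond(v,B) \geq a$, giving $e \geq a|S|$. On the other hand, every such edge is incident in $B$ to some vertex of $N_D^\diamond(S,B)$, and for each $w \in B$ the number of $\diamond$-edges of $D$ from $A$ incident to $w$ is exactly $d_D^\circ(w,A)$, which is at most $b$ by skew-boundedness; indeed, if $\diamond = +$ then a $\diamond$-edge from $A$ to $w$ is an edge $uw$ with $u\in A$, counted by $d_D^-(w,A)$, while if $\diamond = -$ then it is an edge $wu$ with $u\in A$, counted by $d_D^+(w,A)$. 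Hence $e \leq b \cdot |N_D^\diamond(S,B)|$, and combining the two bounds with $a \geq b$ yields
\[
|N_D^\diamond(S,B)| \;\geq\; \frac{a}{b}\,|S| \;\geq\; |S|.
\]

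Since this holds for every nonempty $S\subseteq A$, Hall's criterion is satisfied, and Lemma~\ref{lem:halls} produces a $\diamond$-matching from $A$ into $B$ in $D$ covering $A$, which is exactly the $\diamond$-matching we seek. I do not anticipate any genuine obstacle: the only care needed is the short direction bookkeeping flagged above, which simply translates the definition of skew-boundedness into an upper bound on the edges participating in the double count.
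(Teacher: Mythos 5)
Your proof is correct and is essentially identical to the paper's: both double-count the $\diamond$-edges from $S$ into $B$ to get $a|S|\leq e\leq b\,|N_D^\diamond(S,B)|$, deduce Hall's criterion from $a\geq b$, and apply Lemma~\ref{lem:halls}. Your extra care over the direction bookkeeping is a welcome addition but does not change the argument.
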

\begin{proof}
Let $U\subset A$. As $D$ is $(a,b,\diamond)$-skew-bounded on $(A,B)$, there are at least $a\size{U}$ and at most $b\size{N_D^\diamond(U,B)}$ $\diamond$-edges from $U$ to $N_D^\diamond(U,B)$. Thus, $\size{N_D^\diamond(U,B)}\geq a\size{U}/b\geq \size{U}$. Therefore, by Lemma~\ref{lem:halls}, there is a $\diamond$-matching from $A$ into $B$ which covers $A$.
\end{proof}


In the following lemmas, we find our guide sets and guide graphs. We start by finding in $D$, for each $v\in V(D)$ and $\diamond\in \{+,-\}$, a guide set $A$ and guide graphs which are skew-bounded on $(A,V(D))$.


\begin{lemma}\label{lem-skew-1}
Let $1/n\ll \eps\ll \alpha,\eta\leq 1$ and $1/n\ll\mu\leq \alpha^2/2$. Let $D$ be an $n$-vertex digraph with $\delta^0(D)\geq (1/2+\alpha)n$, let $v\in V(D)$ and let $\diamond\in \{+,-\}$.

Then, there is a set $A\subset N_D^\diamond(v)$ with size $\mu n$ and digraphs $H^+,H^-\subset D$ such that, for each $\circ\in \{+,-\}$, $H^\circ$ is $(\eps n,(1+\eta)\mu \eps n,\circ)$-skew-bounded on $(A,V(D))$.
\end{lemma}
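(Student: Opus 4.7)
The approach is a randomised construction. Take $A \subseteq N_D^\diamond(v)$ uniformly at random from all subsets of size $\mu n$. Then, for each $\circ \in \{+,-\}$, construct $H^\circ \subseteq D$ as a random sparsification: for each $u \in A$ and each $\circ$-edge $e$ of $D$ incident to $u$ (that is, $uw \in E(D)$ if $\circ = +$ or $wu \in E(D)$ if $\circ = -$), include $e$ in $H^\circ$ independently with probability $p_u := (1+\eta/4)\eps n/d_D^\circ(u)$. Since $d_D^\circ(u) \in [(1/2+\alpha)n, n]$, we have $p_u \in [(1+\eta/4)\eps,\ 2(1+\eta/4)\eps/(1+2\alpha)]$, which is well-defined and less than $1$ for $\eps$ small enough.

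For the lower bound on the $\circ$-degree of each $u \in A$ (the first clause of the skew-bounded condition), note that $d^\circ_{H^\circ}(u)$ is a binomial random variable with mean exactly $(1+\eta/4)\eps n$. By \cref{lem:chernoff}, the probability that $d^\circ_{H^\circ}(u) < \eps n$ is at most $\exp(-\Omega(\eps n)) = o(1/n)$ (using $1/n \ll \eps$), so a union bound over $u \in A$ and $\circ \in \{+,-\}$ gives this clause with high probability.

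For the upper bound on the $\bar\circ$-degree from $A$ at each $w \in V(D)$, fix $w$ and $\circ$. Conditional on $A$, this degree is a sum of $X_w := |A \cap N_D^{\bar\circ}(w)|$ independent Bernoullis with probabilities $p_u \leq (1+\eta/4)\eps/(1/2+\alpha)$. The variable $X_w$ is itself hypergeometric with mean $\mu n \cdot f(w)$, where $f(w) := |N_D^{\bar\circ}(w) \cap N_D^\diamond(v)| / |N_D^\diamond(v)|$; by \cref{lem:chernoff}, $X_w \leq (1+o(1))\mu n f(w) + O(\log n)$ for every $w$ with high probability. A second application of \cref{lem:chernoff}, conditional on $A$, gives concentration of the $\bar\circ$-degree around its conditional mean, and a union bound over $w \in V(D)$ completes the argument.

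The hard part is obtaining the sharp constant $(1+\eta)$ in the in-degree bound when $f(w)$ is close to $1$: in this regime $X_w \approx \mu n$, and the crude bound on the conditional mean gives roughly $2(1+\eta/4)\mu\eps n/(1+2\alpha)$, which can exceed the target $(1+\eta)\mu\eps n$ when both $\alpha$ and $\eta$ are small. Resolving this requires exploiting the hypothesis $\mu \leq \alpha^2/2$ together with the randomness of $A$ in a more careful way, for example by refining $p_u$ to depend on $w$ as well, so that edges to vertices with large $f(w)$ are selected with smaller probability, or by a martingale argument (\cref{lem:azuma}) tracking the in-degree at $w$ as $A$ is revealed vertex by vertex, which controls the dependence between the random choices of $A$ and the sparsification. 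The inequality $\mu \leq \alpha^2/2$ guarantees enough room in these refined concentration bounds to absorb the $(1+\eta)$ slack uniformly over $w$.
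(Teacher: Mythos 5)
There is a genuine gap, and it is the one you flag yourself: the upper bound $(1+\eta)\mu\eps n$ on the $\bar\circ$-degrees does not follow from your construction, and the suggested repairs are not worked out and do not obviously work. The constraint here is essentially tight: $H^\circ$ must contain at least $|A|\cdot \eps n=\mu\eps n^2$ edges, while the total capacity under the degree cap is $n\cdot(1+\eta)\mu\eps n$, so the edges must be spread almost perfectly uniformly over \emph{all} $n$ vertices of $V(D)$. Independent sparsification of each $N^\circ_D(u)$ with probability $p_u\approx \eps n/d^\circ_D(u)$ necessarily concentrates mass on vertices $w$ lying in many of the sets $N^\circ_D(u)$, $u\in A$: such a $w$ receives expected in-degree about $\mu\eps n/(1/2+\alpha)$, a factor approaching $2$ above the target when $\alpha$ is small. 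This overload factor is independent of $\mu$ (both the target and the achieved value scale linearly in $\mu$), so the hypothesis $\mu\leq\alpha^2/2$ gives no room to absorb it; that hypothesis only guarantees $A$ fits inside the common neighbourhoods used in the construction. Making $p_{u}$ depend on $w$ turns the problem into finding a fractional assignment with row sums at least $\eps n$ and column sums at most $(1+\eta)\mu\eps n$, which is exactly the combinatorial balancing problem the lemma is really about, not a routine concentration step; and a martingale over the revelation of $A$ controls fluctuations around the mean but not the mean itself, which is what is too large.

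The paper avoids this by a deterministic, greedy construction. It first finds (via an auxiliary bipartite graph and Hall's theorem, using the semidegree condition) a pairing $V(D)=\{x_1,\ldots,x_n\}=\{y_1,\ldots,y_n\}$ with $|N^-_D(x_i)\cap N^\diamond_D(v)\cap N^+_D(y_i)|\geq\alpha^2 n$ for every $i$. It then builds $A$ one vertex at a time: at each step it restricts attention to the indices $j$ whose current load $d^-_{H^+}(x_j)=d^+_{H^-}(y_j)$ is still below $(1+\eta/2)\mu\eps n$ (an edge count shows at least $\eta n/4$ such indices remain), picks by averaging a vertex $w$ lying in at least $\eps n$ of the corresponding common neighbourhoods, and adds the edges $wx_j$ to $H^+$ and $y_jw$ to $H^-$ for $\eps n$ such $j$. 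The explicit degree cap enforces the upper bound exactly, and the pairing $x_j\leftrightarrow y_j$ is what lets a single guide set $A$ serve both $H^+$ and $H^-$ with equal loads. If you want to salvage a probabilistic route, you would need to choose, for each $u\in A$, a near-uniformly spread subset of $N^\circ_D(u)$ of size $\eps n$ subject to the global column constraints, which is again the balancing problem; the greedy argument is the cleaner way through.
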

\begin{proof} We start by showing that we can label the vertices of $V(D)$ as $V(D)=\{x_1,\ldots,x_n\}=\{y_1,\ldots,y_n\}$ so that, for each $i\in [n]$,
\begin{equation}\label{xymatch}
|N^-_D(x_i)\cap N^\diamond_D(v)\cap N^+_D(y_i)|\geq \alpha^2 n.
\end{equation}

To do this, create an auxiliary graph, as follows. For each $w\in V(D)$, create distinct new vertices $w^-$ and $w^+$, and let $V^+=\{w^+:w\in V(D)\}$ and $V^-=\{w^-:w\in V(D)\}$. Consider the auxiliary bipartite graph $H$ with vertex set $V^+\cup V^-$, where for each $x,y\in V(D)$, there is an edge between $x^+$ and $y^-$ if and only if $\size{N^-_D(x)\cap N_D^\diamond(v)\cap N^+_D(y)}\geq \alpha^2 n$.

\begin{claim}\label{claimforhall}
$\delta(H)\geq (1/2+\alpha/2)n$.
\end{claim}
\begin{proof}[Proof of \cref{claimforhall}]
Let $x\in V(D)$. We have  $|N^-_D(x)\cap N^\diamond_D(v)|\geq n-(n-d_D^-(x))-(n-d_D^\diamond(v))\geq 2\alpha n$. Let $B=N^-_D(x)\cap N^\diamond_D(v)$ and $Y=\{y\in V(D):|N^+_D(y)\cap B|\geq \alpha^2 n\}$, and note that $d_H(x^+)= |Y|$.

 For each $u\in B$, we have $|N^-_D(u)|\geq (1/2+\alpha)n$, and thus $e_D(V(D),B)\geq (1/2+\alpha)|B|n$. By the choice of $Y$, we have $e_D(V(D),B)\leq |Y||B|+\alpha^2 n^2$. Therefore, as, in addition, $2\alpha n\leq |B|$, we have
\[
(1/2+\alpha)|B|n\leq |Y||B|+\alpha^2 n^2\leq |Y||B|+\alpha |B|n/2.
\]
Thus, $ (1/2+\alpha/2)|B|n\leq |Y||B|$, so that $|Y|\geq (1/2+\alpha /2)n$. Therefore, $d_H(x^+)=|Y|\geq (1/2+\alpha/2)n$.

A similar argument, with the signs reversed, shows that $d_H(y^-)\geq (1/2+\alpha/2)n$ for each $y\in V(D)$, completing the proof of the claim.
\end{proof}
As in the proof of Proposition~\ref{prop:matching_between_sets}, Claim~\ref{claimforhall} easily implies that Hall's criterion is satisfied, so that there is a matching from $V^+$ to $V^-$ in $H$. That is, we can label the vertices of $V(D)$ as $V(D)=\{x_1,\ldots,x_n\}=\{y_1,\ldots,y_n\}$ so that, for each $i\in [n]$, \eqref{xymatch} holds.

We will now show by induction that, for each $0\leq i\leq \mu n$, there is a set $A_i\subset N_D^\diamond(v)$ with size $i$ and graphs $H_i^+,H_i^-\subset D$ such that, for each $\circ\in \{+,-\}$, $H_i^\circ$ is $(\eps n,(1+\eta)\mu \eps n,\circ)$-skew-bounded on $(A_i,V(D))$, $e(H_i^\circ)=i\eps n$, and, for each $j\in [n]$, $d_{H^+_i}^-(x_j)=d_{H^-_i}^+(y_j)$.

Note that if $A_0=\emptyset$ and if $H^+_0$, $H^-_0$ have no edges and vertex set $V(D)$, then the conditions hold, so assume that $0\leq i<\mu n$ and we have $A_i\subset N_D^\diamond(v)$ and $H_i^+, H_i^-\subset D$ as described.

Let $J_i\subset [n]$ be the set of $j\in [n]$ for which $d_{H^+_i}^-(x_j)=d_{H^-_i}^+(y_j)\leq (1+\eta/2)\mu \eps n$. Note that, as $e(H^+_i)=e(H^-_i)= i\eps n\leq \mu \eps n^2$, we have
\[
(n-|J_i|)(1+\eta/2)\mu\eps n\leq \mu \eps n^2.
\]
Thus, as $\eta\leq 1$, $(n-|J_i|)\leq n/(1+\eta/2)\leq  n(1-\eta/4)$, so that $|J_i|\geq \eta n/4$.

For each $j\in J_i$, let $W_{i,j}=(N^-_D(x_j)\cap N^\diamond_D(v)\cap N^+_D(y_j))\setminus A_i$, noting that, by \eqref{xymatch}, $|W_{i,j}|\geq \alpha^2n-i> \alpha^2n-\mu n\geq \alpha^2n/2$. By averaging, choose some $w_i\in V(D)$ such that
\[
|\{j\in J_i:w_i\in W_{i,j}\}|\geq \frac{\sum_{j\in J_i}|W_{i,j}|}{n}\geq \frac{\eta n/4\cdot \alpha^2n/2}{n}\geq \eps n,
\]
using that $\alpha,\eta\gg \eps$. Choose a set $J_i'\subset \{j\in J_i:w_i\in W_{i,j}\}$ with size $\eps n$. Let $A_{i+1}=A_i\cup \{w_i\}$. Let $H^+_{i+1}$ be the digraph $H^+_i$ with edges $w_ix_j$, $j\in J'_i$, added. Note that, as $d^-_{H^+_i}(x_j)\leq (1+\eta/2)\mu \eps n$ for each $j\in J'_i$, $H^+_{i+1}$ is $(\eps n,(1+\eta)\mu \eps n ,+)$-skew-bounded on $(A_{i+1},V(D))$. Furthermore, by the definition of $W_{i,j}$, the edges added to $H^+_i$ are in $D$, and therefore $H^+_{i+1}\subset D$.

 Let $H^-_{i+1}$ be the digraph $H^-_i$ with the edges $y_jw_i$, $j\in J'_i$, added. Note that, similarly, $H^-_{i+1}$ is $(\eps n,(1+\eta)\mu \eps n,-)$-skew-bounded on $(A_{i+1},V(D))$. Finally, noting that $A_{i+1}$ has size $i+1$, that $e(H_{i+1}^+)=e(H_{i+1}^-)=(i+1)\eps n$ and that, for each $j\in [n]$, $d^-_{H_{i+1}^+}(x_j)=d^+_{H_{i+1}^-}(y_j)$, completes the inductive step, and hence the proof.
\end{proof}


We now show that the guide sets and guide graphs found by \cref{lem-skew-1} have a similar skew-bounded property when restricted to random vertex subsets, as follows.


\begin{lemma}\label{lem-skew-2} Let $1/n\ll \eps\ll \alpha,\eta\leq 1$ and $1/n\ll 1/k,p_0,p_1,\ldots,p_k\leq 1$. Let $\mu= \alpha^2p_0/4$. Let $D$ be an $n$-vertex digraph with $\delta^0(D)\geq (1/2+\alpha)n$. Let $V_0,V_1,\ldots,V_k\subset V(D)$ be disjoint random sets chosen uniformly at random subject to $|V_i|=p_in$ for each $i\in \{0,\ldots,k\}$.

Then, with high probability, for each $v\in V(D)$ and $\diamond\in \{+,-\}$, there is a set $A_{v,\diamond}\subset N_D^\diamond(v)\cap V_0$ with size $\mu n$ and digraphs $H_{v,\diamond}^{\circ}\subset D$, $\circ\in \{+,-\}$, such that, for each $\circ\in \{+,-\}$ and $i\in [k]$, $H_{v,\diamond}^{\circ}$ is $(\eps p_in,(1+\eta)\eps \mu n,\circ)$-skew-bounded on $(A_{v,\diamond},V_i)$.
\end{lemma}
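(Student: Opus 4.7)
The plan is to invoke Lemma~\ref{lem-skew-1} for every pair $(v,\diamond)$ to produce \emph{global} guide sets $A'_{v,\diamond}\subseteq N_D^\diamond(v)$ and guide graphs $H^+_{v,\diamond},H^-_{v,\diamond}\subseteq D$ that are skew-bounded on $(A'_{v,\diamond},V(D))$, and then to transfer the skew-boundedness to the random subsets $V_0,\ldots,V_k$ via hypergeometric concentration (Lemma~\ref{lem:chernoff}). The sets $A_{v,\diamond}$ required by the conclusion will be arbitrary $\mu n$-subsets of $A'_{v,\diamond}\cap V_0$, while the guide graphs $H^{\pm}_{v,\diamond}$ are carried over unchanged.

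Concretely, I would fix small constants $\delta,\delta',\eta'>0$ with $\delta,\delta',\eta'\ll \eta$, and set $\mu':=\mu/(p_0(1-\delta'))=\alpha^2/(4(1-\delta'))$ and $\eps':=\eps/(1-\delta)$. Taking $\delta'$ small ensures $\mu'\leq \alpha^2/2$, and the hierarchy $1/n\ll\eps\ll\alpha,\eta$ extends to $1/n\ll\eps'\ll\alpha,\eta'$, so Lemma~\ref{lem-skew-1} applies with these parameters. For every $v\in V(D)$ and $\diamond\in\{+,-\}$ it supplies a set $A'_{v,\diamond}\subseteq N_D^\diamond(v)$ of size $\mu' n$ and digraphs $H^+_{v,\diamond},H^-_{v,\diamond}\subseteq D$ such that, for each $\circ\in\{+,-\}$, $H^\circ_{v,\diamond}$ is $(\eps' n,(1+\eta')\mu'\eps' n,\circ)$-skew-bounded on $(A'_{v,\diamond},V(D))$.

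Now I would turn to the randomness. Marginally each $V_i$ is a uniform $p_i n$-subset of $V(D)$, so for any fixed $v,\diamond,\circ,i$ each of the quantities $|A'_{v,\diamond}\cap V_0|$, $|N_{H^\circ_{v,\diamond}}^\circ(v')\cap V_i|$ for $v'\in A'_{v,\diamond}$, and $|N_{H^\circ_{v,\diamond}}^{\circ'}(u)\cap A'_{v,\diamond}\cap V_0|$ for $u\in V(D)$ is hypergeometrically distributed with expectation linear in $n$ (the third quantity depends only on the randomness of $V_0$). Lemma~\ref{lem:chernoff} bounds each individual failure probability by $\exp(-\Omega(n))$, so a union bound over the polynomially many tuples $(v,\diamond,\circ,i,v'\text{ or }u)$ shows that with high probability the following hold simultaneously: (i) $|A'_{v,\diamond}\cap V_0|\geq (1-\delta')p_0\mu' n=\mu n$ for every $v,\diamond$; (ii) $d_{H^\circ_{v,\diamond}}^\circ(v',V_i)\geq (1-\delta)p_i\eps' n=\eps p_i n$ for every $v,\diamond,\circ$, every $v'\in A'_{v,\diamond}$, and every $i\in[k]$; and (iii) $d_{H^\circ_{v,\diamond}}^{\circ'}(u,A'_{v,\diamond}\cap V_0)\leq (1+\delta)p_0(1+\eta')\mu'\eps' n$ for every $v,\diamond,\circ$, every $u\in V(D)$, and every $i\in[k]$.

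Assuming (i)--(iii), I would take $A_{v,\diamond}$ to be an arbitrary subset of $A'_{v,\diamond}\cap V_0$ of size $\mu n$, which exists by (i). Condition (ii) delivers the required lower degree bound into each $V_i$ since $A_{v,\diamond}\subseteq A'_{v,\diamond}$, and condition (iii) combined with $A_{v,\diamond}\subseteq A'_{v,\diamond}\cap V_0$ gives
\[
d_{H^\circ_{v,\diamond}}^{\circ'}(u,A_{v,\diamond})\leq (1+\delta)p_0(1+\eta')\mu'\eps' n = \frac{(1+\delta)(1+\eta')}{(1-\delta)(1-\delta')}\,\mu\eps n\leq (1+\eta)\eps\mu n,
\]
where the last inequality holds for any sufficiently small $\delta,\delta',\eta'$ depending only on $\eta$. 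The only real obstacle is calibrating $\delta,\delta',\eta',\mu',\eps'$ so that all three bounds go through simultaneously while keeping $\mu'\leq\alpha^2/2$; the slack between the target $\mu=\alpha^2 p_0/4$ and the threshold $\alpha^2/2$ provides exactly the room needed.
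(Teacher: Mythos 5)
Your proposal is correct and follows essentially the same route as the paper: apply Lemma~\ref{lem-skew-1} with slightly inflated parameters $\mu',\eps'$ (and a reduced $\eta'$), then use hypergeometric concentration and a union bound to transfer the lower degree bounds to each $V_i$ and the upper degree bounds to $A'_{v,\diamond}\cap V_0$, finally passing to a $\mu n$-subset. The parameter calibration you describe matches the paper's choice of $\eps'=(1+\eta/4)\eps$, $\mu'=(1+\eta/4)\alpha^2/4$, $\eta'=\eta/4$.
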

\begin{proof} By Lemma~\ref{lem-skew-1}, applied with $\eps'=(1+\eta/4)\eps$, $\eta'=\eta/4$ and $\mu'=(1+\eta/4)\alpha^2/4$, for each $v\in V(D)$ and $\diamond\in \{+,-\}$, there is a set $\bar{A}_{v,\diamond}\subset N_D^\diamond(v)$ with size $(1+\eta/4)\alpha^2 n/4$ and digraphs $H_{v,\diamond}^+,H_{v,\diamond}^-\subset D$ such that, for each $\circ\in \{+,-\}$, $H_{v,\diamond}^\circ$ is $((1+\eta/4)\eps n,(1+\eta/4)^3\eps\alpha^2 n/4,\circ)$-skew-bounded on $(\bar{A}_{v,\diamond},V(D))$.

Select $V_0,V_1,\ldots,V_k\subset V(D)$ according to the distribution in the lemma.
Using Lemma~\ref{lem:chernoff}, and a union bound, we have that, with high probability, the following hold.
\begin{enumerate}[label = \textbf{Q\arabic{enumi}}]
\item For each $v\in V(D)$ and $\diamond\in \{+,-\}$, $|\bar{A}_{v,\diamond}\cap V_0|\geq \alpha^2p_0 n/4=\mu n$.\label{ham1}
\item For each $v\in V(D)$, $\diamond,\circ\in \{+,-\}$, and $w\in \bar{A}_{v,\diamond}$, $|N^\circ_{H^\circ_{v,\diamond}}(w,V_i)|\geq \eps p_in$.\label{ham2}
\item For each $v\in V(D)$, $\diamond,\circ\in \{+,-\}$, and $w\in V(D)$, $|N^{\bar{\circ}}_{H^\circ_{v,\diamond}}(w,\bar{A}_{v,\diamond})\cap V_0)|\leq (1+\eta)\eps\alpha^2 p_0n/4=(1+\eta)\eps \mu n$, where $\bar{\circ}\in \{+,-\}$ is such that $\bar{\circ}\neq \circ$.\label{ham3}
\end{enumerate}

Indeed, by Lemma~\ref{lem:chernoff}, as $\eps,\eta,\alpha,p_0,p_1,\ldots,p_k\gg 1/n$, for any instance of $v\in V(D)$, $\diamond,\circ\in \{+,-\}$, and $w\in V(D)$, the property \ref{ham1} above holds with probability $1-\exp(-\Omega(n))$, and the same is true for \ref{ham2} and \ref{ham3}. Therefore, by a union bound, with high probability, the properties \ref{ham1}, \ref{ham2} and \ref{ham3} hold.

Now, for each $v\in V(D)$ and $\diamond\in \{+,-\}$, using \ref{ham1}, choose $A_{v,\diamond}\subset \bar{A}_{v,\diamond}\cap V_0$ with $|A_{v,\diamond}|=\mu n$.
By \ref{ham2} and \ref{ham3}, we have, for each $\circ\in \{+,-\}$ and $i\in [k]$, that $H_{v,\diamond}^\circ$ is $(\eps p_in,(1+\eta)\eps\mu n,\circ)$-skew-bounded on $(A_{v,\diamond},V_i)$, as required,
\end{proof}


We will now use the guide sets produced by Lemma~\ref{lem-skew-2} to randomly embed $T_0$, the small core of the original tree, and then use the guide graphs to find matchings from certain subsets of the image of the embedding to other random sets, as follows.

\begin{lemma}\label{lem-skew-3}
Let $1/n \ll c\ll \beta\ll \eta,q,\alpha\leq 1$ and $1/n\ll c\ll p\ll 1/m$. Let $D$ be an $n$-vertex digraph with $\delta^{0}(D)\geq (1/2+\alpha)n$.

Let $T$ be an oriented tree with $\Delta^\pm(T)\leq cn/\log n$ consisting of a subtree $T_0\subset T$ with $|T_0|\leq \beta n$, such that every vertex in $V(T)\setminus V(T_0)$ is attached as a leaf to $T_0$. Let $t\in V(T_0)$. Let $U_0=V(T_0)$ and let $U_1\cup \ldots\cup U_m$ be a partition of $V(T)\setminus V(T_0)$ such that, for each $i\in [m]$, either $e_{T}(V(T_0),U_i)=0$ or $e_{T}(U_i,V(T_0))=0$. Let $V_0,V_1,\ldots,V_m\subset V(D)$ be disjoint random sets chosen uniformly at random subject to $|V_0|=qn$, and, for each $i\in [m]$, $|V_i|= \lfloor(1+\eta)|U_i|\rfloor+p n$.\footnote{Note that this gives an implicit bound on $\size{T}$.}

Then, with high probability, for each $s\in V_0$, there is an embedding of $T$ into $D$ such that $t$ is embedded to $s$, and, for each $i\in \{0,1,\ldots,m\}$, $U_i$ is embedded into $V_i$.
\end{lemma}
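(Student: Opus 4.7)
The plan is to apply Lemma~\ref{lem-skew-2} to harvest, with high probability over $V_0,V_1,\ldots,V_m$, guide sets $A_{v,\diamond}\subset N_D^\diamond(v)\cap V_0$ and accompanying guide graphs $H^\circ_{v,\diamond}$ that are approximately regular into every $V_i$; then, for each fixed $s\in V_0$, to randomly embed $T_0$ into $V_0$ using these guide sets; and finally, for each $i\in[m]$, to use the guide graphs together with the skew-bounded Hall-type criterion (Proposition~\ref{skewhall}) to embed $U_i$ into $V_i$. Since the random embedding will be shown to succeed with positive probability for every $s$ on the high-probability event for $V_0,\ldots,V_m$, the probabilistic method yields the desired deterministic embedding for each $s\in V_0$.

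Pick auxiliary parameters with $c\ll\eps^*,\eta\ll\beta,\eps,p,q,\alpha$ and set $\mu=\alpha^2q/4$. Applying Lemma~\ref{lem-skew-2} with $p_0=q$ and $p_i=|V_i|/n\geq p$ gives, with high probability, for every $v\in V(D)$ and $\diamond\in\{+,-\}$, a set $A_{v,\diamond}\subset N_D^\diamond(v)\cap V_0$ of size $\mu n$ and digraphs $H^\circ_{v,\diamond}\subset D$ that are $(\eps^* p_i n,(1+\eta)\eps^*\mu n,\circ)$-skew-bounded on $(A_{v,\diamond},V_i)$ for every $i\in[m]$ and $\circ\in\{+,-\}$. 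Fix any such outcome and any $s\in V_0$. Order $V(T_0)$ as $t_1=t,t_2,\ldots,t_\ell$ so that each $t_j$ with $j\geq 2$ has a unique earlier $T_0$-neighbour $t_{p(j)}$, and let $\diamond_j^*\in\{+,-\}$ satisfy $t_j\in N_{T_0}^{\diamond_j^*}(t_{p(j)})$. Set $\phi(t_1)=s$ and, iteratively for $j\geq 2$, choose $\phi(t_j)$ uniformly at random from $A_{\phi(t_{p(j)}),\diamond_j^*}\setminus\phi(\{t_1,\ldots,t_{j-1}\})$, which is non-empty since $|T_0|\leq\beta n\ll\mu n$. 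For each $i\in[m]$, let $\diamond_i\in\{+,-\}$ be the common direction of the edges between $V(T_0)$ and $U_i$ (from $V(T_0)$'s side), and form an auxiliary bipartite digraph $K_i$ on $U_i\cup V_i$ by adding, for each $u\in U_i$ with parent $t_{j_u}$ and $j_u\geq 2$, edges from $u$ to every $\diamond_i$-neighbour of $\phi(t_{j_u})$ in $V_i$ that lies in $H^{\diamond_i}_{\phi(t_{p(j_u)}),\diamond_{j_u}^*}$.

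The out-side of the skew-boundedness of the guide graphs immediately gives each $u\in U_i$ at least $\eps^* p_i n$ out-neighbours in $K_i$. For the in-side, write the in-degree of $w\in V_i$ in $K_i$ as $Z_{w,i}=\sum_j b_j X_j^{w,i}$, where $b_j=|\{u\in U_i:\pi(u)=t_j\}|\leq\Delta^\pm(T)\leq cn/\log n$ and $X_j^{w,i}\in\{0,1\}$ indicates that $\phi(t_j)$ falls into the relevant $\bar\diamond_i$-neighbourhood of $w$, of size at most $(1+\eta)\eps^*\mu n$, inside the guide set $A_{\phi(t_{p(j)}),\diamond_j^*}$ of size $\mu n$; using $|T_0|\ll\mu n$, one obtains $\E[Z_{w,i}]\leq(1+2\eta)\eps^*|U_i|$. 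Applying the Azuma bound (Lemma~\ref{lem:azuma}) to the Doob martingale of $Z_{w,i}$, whose steps are bounded by $b_j$ and thus satisfy $\sum_j b_j^2\leq(cn/\log n)|U_i|$, yields $\Prob[Z_{w,i}>(1+3\eta)\eps^*|U_i|]\leq n^{-K}$ with $K=K(c,\eps^*,\eta)$ arbitrarily large for $c$ small, provided $|U_i|\geq c^{1/2}n$; when $|U_i|<c^{1/2}n$, the trivial bound $Z_{w,i}\leq|U_i|\leq\eps^* p_i n$ (using $c\ll\eps^*,p$) already suffices. A union bound over $w\in V_i$ and $i\in[m]$ then shows that, with probability at least $1/2$, $Z_{w,i}\leq\eps^* p_i n$ for all $w,i$ (using $|V_i|\geq(1+\eps)|U_i|$ and $\eta\ll\eps$), so each $K_i$ is $(\eps^* p_i n,\eps^* p_i n,+)$-skew-bounded and Proposition~\ref{skewhall} delivers the required matchings. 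The $O(\Delta^\pm(T))=o(pn)$ leaves $u\in U_i$ with $\pi(u)=t_1$ are handled separately: reserve a set $R_i$ of $\Delta^\pm(T)$ vertices inside $N_D^{\diamond_i}(s)\cap V_i$ (which has size at least $|V_i|/2$ by Lemma~\ref{lem:degree_condition_inherited_by_random_subsets}), run the matching argument above in $V_i\setminus R_i$, and place these leaves greedily into $R_i$ afterwards.

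The principal obstacle is the concentration bound on $Z_{w,i}$: the Azuma increments $b_j$ can be as large as $\Delta^\pm(T)$, so the hypothesis $\Delta^\pm(T)\leq cn/\log n$ is precisely what supplies the $\log n$ savings in $\sum_j b_j^2\leq c|U_i|n/\log n$ that convert the Azuma tail into polynomial decay in $n$, just strong enough to survive the union bound over $w\in V_i$ and $i\in[m]$. Weakening the degree bound to a constant multiple of $n$ would leave the martingale concentrated only at scale $n^{1/2}$, far below the target $\eps^*|U_i|$, and the approach would collapse.
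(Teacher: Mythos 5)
Your proposal is correct and follows essentially the same route as the paper: guide sets and guide graphs from Lemma~\ref{lem-skew-2}, a random guided embedding of $T_0$ into $V_0$, auxiliary bipartite digraphs whose in-degrees are controlled by an Azuma/supermartingale argument exploiting $\sum_j b_j^2\leq c n|U_i|/\log n$, and Proposition~\ref{skewhall} to extract the matchings, finished off with a union bound and the probabilistic method. Your separate treatment of the leaves attached to $t_1=t$ (via the reserved sets $R_i$) is a small extra care the paper glosses over, but otherwise the two arguments coincide.
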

\begin{proof} Choose $\eps$ such that $\beta\ll\eps\ll \eta,q,\alpha$. For each $j\in [m]$, let $p_j=(\lfloor(1+\eta)|U_j|\rfloor/n)+p$.
Choose $V_0,V_1,\ldots,V_m$ according to the distribution in the lemma. By Lemma~\ref{lem-skew-2} applied with $\eta' = \eta/2$ and $p_0=q$, with high probability, for each $v\in V(D)$ and $\diamond\in \{+,-\}$, there is
\begin{enumerate}[label = \textbf{R\arabic{enumi}}]
\item a set $A_{v,\diamond}\subset N_D^\diamond(v)\cap V_0$ with size $q\alpha^2n/4$, and\label{theabove}
\item  digraphs $H_{v,\diamond}^{\circ}\subset D$, $\circ\in\{+,-\}$, such that, for each $j\in [m]$ and $\circ\in \{+,-\}$, $H_{v,\diamond}^{\circ}$ is $(\eps p_jn,(1+\eta/2)\eps q\alpha^2n/4,\circ)$-skew-bounded on $(A_{v,\diamond},V_j)$.\label{theabove2}
\end{enumerate}

We will now show that, given only \ref{theabove} and \ref{theabove2}, we can embed $T$ as required in the lemma for each $s\in V_0$. Let then $s\in V_0$. We will randomly embed $T_0$ into $D[V_0]$, as follows, before showing that, with high probability, it can be extended into the required copy of $T$. Let $\ell=|T_0|$ and label $V(T_0)=\{t_1,\ldots,t_\ell\}$, so that $t_1=t$ and $T_0[\{t_1,\ldots,t_i\}]$ is a tree for each $i\in [\ell]$. Let $s_1=s$ and embed $t_1$ to $s_1$. For each $i\in \{2,\ldots,\ell\}$ in turn, let $j_i\in \{1,\ldots,i-1\}$ be such that $t_{j_i}$ is the in- or out-neighbour of $t_i$ in $T_0[\{t_1,\ldots,t_i\}]$ and let $\diamond_i\in \{+,-\}$ be such $t_i\in N_{T_0}^{\diamond_i}(t_{j_i})$, and, uniformly at random, embed $t_i$ to $s_i\in A_{s_{j_i},\diamond_i}\setminus \{s_1,\ldots,s_{i-1}\}$. Such an embedding is possible since, for every $v \in V(D)$ and $\diamond \in \{+, -\}$, $\size{A_{v,\diamond}}$ is much larger than $\size{T_0}$ as $\beta\ll q,\alpha$.

\begin{claim} For each $j\in [m]$, with high probability, the embedding of $T_0$ can be extended to an embedding of $T[V(T_0)\cup U_j]$ by embedding $U_j$ into $V_j$.\label{claimembed}
\end{claim}
As $p\gg 1/n$, and $m\leq 1/p$, we can take a union bound over all $j\in [m]$, to show that, with high probability, for each $j\in [m]$, the embedding of $T_0$ can be extended to $T[V(T_0)\cup U_j]$ by embedding $U_j$ into $V_j$, and hence $T$ can be embedded as required in the lemma. Therefore, there is some choice of the embedding of $T_0$ for which this can be done. It is left then to prove Claim~\ref{claimembed}.

\medskip
\noindent\emph{Proof of Claim~\ref{claimembed}.} Let $j\in [m]$ and let $\circ_j\in \{+,-\}$ be such that all the edges from $V(T_0)$ to $U_j$ in $T$ are $\circ_j$-edges. For each $i\in [\ell]$, let $d_{j,i}=|N_T^{\circ_j}(t_i,U_j)|$. For each $i\in [\ell]$, take $d_{j,i}$ new vertices and call them $w_{j,i,i'}$, $i'\in [d_{j,i}]$. Let $W_j=\{w_{j,i,i'}:i\in [\ell],i'\in [d_{j,i}]\}$. Let $K_j$
be the directed graph with vertex set $W_j\cup V_j$, containing only $\circ_j$-edges from $W_j$ to $V_j$, and where, for each $i\in [\ell]$, $i'\in [d_{j,i}]$ and $v\in V_j$, there is a $\circ_j$-edge from $w_{j,i,i'}$ to $v$ in $K_j$ if, and only if, $s_iv\in E(H^{\circ_j}_{s_{j_i},\diamond_{i}})$.

We will show that, with high probability, $K_j$ is $(\eps p_jn,\eps p_j n,\circ_j)$-skew-bounded on $(W_j,V_j)$. This is enough to prove the claim, as, by Proposition~\ref{skewhall}, there is a $\circ_j$-matching from $W_j$ into $V_j$ in $K_j$ which covers $W_j$. Thus, we can label distinct vertices $v'_{j,i,i'}$, $i\in [\ell]$, $i'\in [d_{j,i}]$ in $V_j$ so that $w_{j,i,i'}v'_{j,i,i'}$, $i\in [\ell]$ and $i'\in [d_{j,i}]$, is a matching in $K_j$. For each $i\in [\ell]$, use the vertices $v'_{j,i,i'}$, $i'\in [d_{j,i}]$, to embed $d_{j,i}$ $\circ_j$-neighbours of $t_i$ in $U_j$ into $V_j$. This is possible as, by the definition of $K_j$ and $H^{\circ_j}_{s_{j_i},\diamond_{i}}$,
$s_iv'_{j,i,i'}$ is a $\circ_j$-edge in $D$. Therefore, this extends the embedding of $T_0$ to an embedding of $T_0\cup T[U_j]$ with $U_j$ embedded into $V_j$, as required.

Thus, it is sufficient to prove that, with high probability, $K_j$ is $(\eps p_jn,\eps p_j n,\circ_j)$-skew-bounded on $(W_j,V_j)$. Now, for each $i\in [\ell]$, $s_i\in A_{j_i,\diamond_i}$, and therefore $s_i$ has at least $\eps p_j n$ $\circ_j$-neighbours in $V_j$ in $H^{\circ_j}_{s_{j_i},\diamond_i}$ by \ref{theabove2}. Therefore, for each $i\in [\ell]$ and $i'\in [d_{j,i}]$, $w_{j,i,i'}$ has at least $\eps p_jn$ $\circ_j$-neighbours in $K_j$. That is, each $v\in W_j$ has at least $\eps p_jn$ $\circ_j$-neighbours in $K_j$. Thus, letting $\bar{\circ}_j\in \{+,-\}$ with $\bar{\circ}_j\neq \circ_j$, it is sufficient to prove that, for each $v\in V_j$, with probability $1-o(n^{-1})$, $d^{\bar{\circ}_j}_{K_j}(v,W_j)\leq \eps p_jn$.

Let then $v\in V_j$. For each $i\in [\ell]$, let
\[
X^{j,v}_i=\left\{\begin{array}{ll}
d_{j,i} & \quad\text{ if }s_iv\in E(H^{\circ_j}_{s_{j_i},\diamond_{i}})\\
0 & \quad\text{ otherwise,}
\end{array}\right.
\]
so that $d^{\bar{\circ}_j}_{K_j}(v,W_j)=\sum_{i\in [\ell]}X^{j,v}_i$.
Note that, when  $s_i\in A_{s_{j_i},\diamond_i}\setminus \{s_1,\ldots,s_{i-1}\}$ is chosen uniformly at random, by \ref{theabove} and \ref{theabove2}, and as $\beta\ll \eta,\alpha,q$ and $i\leq \ell\leq \beta n$, if $d_{i,j}>0$, then $X^{j,v}_i=d_{j,i}$ with probability at most
\[
\frac{d_{H^{\circ_j}_{s_{j_i},\diamond_i}}(v)}{|A_{s_{j_i},\diamond_i}\setminus \{s_1,\ldots,s_{i-1}\}|}\leq \frac{(1+\eta/2)\eps q\alpha^2n/4}{ q\alpha^2 n/4-(i-1)}\leq (1+\eta)\eps.
\]
Let $\gamma=(1+\eta)\eps$. Then, for each $i\in [\ell]$, $\E[X^{j,v}_i|X^{j,v}_1,\ldots,X^{j,v}_{i-1}]\leq \gamma\cdot d_{j,i}$.
Note that the inequality 
\begin{equation}\label{eq:quadratic_inequality}
a^2+b^2 \leq (a-1)^2+(b+1)^2    
\end{equation}
holds whenever $1 \leq a \leq b$. Repeated application of this inequality shows that $\sum_{i\in [\ell]}d_{j,i}^2$ is maximised when as many of the $d_{j,i}$ are maximised as possible. Therefore, as $d_{j,i}\leq cn/\log n$ for each $i\in [\ell]$, and $\sum_{i\in [\ell]}d_{j,i}\leq |U_j|\leq  n$, we have $\sum_{i\in [\ell]}d_{j,i}^2 \leq (n/(cn/\log n))(cn/\log n)^2 = cn^2/\log n$.
Using this and the fact that $|X^{j,v}_i - \gamma \cdot d_{j,i}| \leq d_{j,i}$ for each $i\in [\ell]$ and $c \ll p$, we can apply \cref{cor:azuma}~\ref{cor:azuma_sup} with $a_i = \gamma \cdot d_{j,i}$, $c_i = d_{j,i}$ and $t=p n/3$ to get
\[
\Prob\lsb\sum_{i\in [\ell]}X^{j,v}_i\geq \gamma\cdot\left(\sum_{i\in [\ell]}d_{j,i}\right) + p n/3\rsb \leq 2 \exp \lb \frac{-(pn/3)^2}{\sum_{i\in [\ell]}d_{j,i}^2} \rb \leq 2\exp \lb \frac{-p^2 \log n}{9c} \rb \leq o(n^{-1}).
\]
Thus, with probability $1-o(n^{-1})$, we have 
\begin{align*}
d^{\bar{\circ}_j}_{K_j}(v,W_j)&=\sum_{i\in [\ell]}X^{j,v}_i < \gamma\cdot\left(\sum_{i\in [\ell]}d_{j,i}\right)+p n/3 < \gamma|U_j|+p n/3\leq \gamma p_jn/(1+\eta)\\
&=(1+\eta)\eps p_jn/(1+\eta)=\eps p_j n,
\end{align*}
completing the proof of the claim, and hence the lemma.
\hspace{4.5cm}\hfill\qed
\end{proof}

Finally, by combining Lemma~\ref{lem-skew-3} and Lemma~\ref{lem:linearly_many_copies_of_a_tree}, we can prove Lemma~\ref{lem:CSstars}.

\begin{proof}[Proof of Lemma~\ref{lem:CSstars}] Let $p$ satisfy $1/n\ll c\ll p \ll \eps, 1/K$.
For each $j\in [\ell]$, let $s_j$ be the vertex of $S_j$ with an in- or out-neighbour in $V(T')$ in $T$. Let $\mathcal{R}$ be a maximal set of pairs $(R,r)$ for which $R$ is a directed tree with at most $K$ edges and $r\in V(R)$, such that the pairs $(R,r)$ are unique up to isomorphism. Let $m=|\mathcal{R}|$ and enumerate $\mathcal{R}$ as $(R_1,r_1),\ldots,(R_m,r_m)$. Note that $p\ll 1/m$.

Let $T''=T[V(T')\cup N^+_T(V(T'))\cup N^-_T(V(T'))]$. For each $i\in [m]$ and $\diamond \in \{+,-\}$, let $U_{i,\diamond}\subset V(T'')$ be the set of vertices $s_j$, $j\in [\ell]$, for which $(S_j,s_j)$ is isomorphic to  $(R_i,r_i)$ and the edge from $V(T')$ to $s_j$ in $T$ is a $\diamond$-edge.

In $V(D)$, take disjoint random sets $V_0$ and $V_{i,\diamond,j}$, $i\in [m]$, $\diamond\in \{+,-\}$ and $j\in \{1,2\}$, uniformly at random subject to the following.
\begin{itemize}
\item $|V_0|=\eps n/2$.
\item  For each $i\in [m]$ and $\diamond\in \{+,-\}$, we have that $|V_{i,\diamond,1}|=\lfloor(1+\eps/6)|U_{i,\diamond}|\rfloor+p n$ and $|V_{i,\diamond,2}|=(\lfloor(1+\eps/6)|U_{i,\diamond}|\rfloor+p n)(|R_i|-1)$.
\end{itemize}
Note that this is possible, as
\begin{align*}
|V_0|+\sum_{i\in [m],\diamond\in \{+,-\}}(|V_{i,\diamond,1}|+|V_{i,\diamond,2}|)&=|V_0|+\sum_{i\in [m],\diamond\in \{+,-\}}(\lfloor(1+\eps/6)|U_{i,\diamond}|\rfloor+p n)|R_i|
\\
&\leq \eps n/2+(1+\eps/6)\sum_{j\in [\ell]}|S_j|+\sum_{i\in [m]}2p n\cdot |R_i|\\
&\leq \eps n/2+(1+\eps/6)|T|+(2p n)\cdot m\cdot K
\leq n.
\end{align*}

Now, with probability $\eps /2$, $v\in V_0$. By Lemma~\ref{lem-skew-3}, with high probability, if $v\in V_0$, then there is an embedding of $T''$ into $D$ such that $t$ is embedded to $v$, $V(T')\subset V_0$, and, for each $i\in [m]$ and $\diamond\in \{+,-\}$, $U_{i,\diamond}$ is embedded into $V_{i,\diamond,1}$. By \cref{lem:linearly_many_copies_of_a_tree}, for each $i\in [m]$ and $\diamond\in \{+,-\}$, $D[V_{i,\diamond,1}\cup V_{i,\diamond,2}]$ contains $|V_{i,\diamond,1}|$ vertex disjoint copies of $R_i$, in which $r_i$ is copied into $V_{i,\diamond,1}$. For each $i\in [m]$ and $\diamond\in \{+,-\}$, add each copy of $R_i$ containing an embedded vertex of $U_{i,\diamond}$ to the embedding of $T''$. Note that this results in a copy of $T$.
\end{proof}

\subsection{Embedding constant-sized trees as paths}\label{sec:CSaspaths}

Given our decomposition $T_0\subset T_1\subset T_2\subset T_3=T$, we have now embedded $T_1$. We now embed the vertices from $V(T_2) \setminus V(T_1)$, recalling that we obtain $T_2$ from $T_1$ by adding constant-sized trees, where each tree is attached to $T_1$ by exactly two bare paths of length $2$. In the following lemma, we embed $T_2\setminus T_1$ so that the vertices in $V(T_2)\cap V(T_1)$ are embedded to preselected vertices (labelled $a_i,b_i$, $i\in [\ell]$). This allows us to extend our embedding of $T_1$ to one of $T_2$.

\begin{lemma}\label{lem:CSaspaths}
Let $1/n\ll 1/K\leq 1/k\ll \alpha,\eps$. Suppose $T$ is a forest formed of vertex-disjoint oriented trees $T_i$, $i\in [\ell]$, with at most $(1-\eps)n$ vertices in total, and so that $k\leq |T_i|\leq K$, for each $i\in [\ell]$, and each tree $T_i$ contains distinct vertices $r_i$ and $s_i$ which are leaves in $T_i$ whose neighbour has total in- and out-degree 2.

Suppose $D$ is an $n$-vertex digraph with $\delta^0(D)\geq (1/2+\alpha)n$, containing the distinct vertices $a_i$, $b_i$, $i\in [\ell]$. Then, $D$ contains a copy of $T$ in which, for each $i\in [\ell]$, $r_i$ is embedded to $a_i$ and $s_i$ is embedded to $b_i$.
\end{lemma}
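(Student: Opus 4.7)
The plan is to extend the iterated-matching strategy of \cref{lem:linearly_many_copies_of_a_tree} to accommodate the two fixed anchor vertices $a_i,b_i$ per tree $T_i$. First, group $T_1,\ldots,T_\ell$ by the isomorphism type of the marked triple $(T_i,r_i,s_i)$; since $|T_i|\le K$, there are at most $m=m(K)$ types. For each type $(R,r,s)$ with index set $I=I(R)\subseteq[\ell]$, let $u,w$ denote the (degree-$2$) neighbours of $r,s$ in $R$. Because $|R|\ge k$ and $1/k\ll\alpha$, we have $u\ne w$. Letting $R^{*}:=R-\{r,s\}$, the graph $R^{*}$ is a tree (a tree minus two leaves) with $u,w$ among its leaves. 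I would label $V(R^{*})=\{v_1,\ldots,v_{|R|-2}\}$ so that $v_1=u$, $v_{j_0}=w$ for some $j_0=j_0(R)\ge 2$, and $R^{*}[\{v_1,\ldots,v_j\}]$ is a tree for every $j$, by traversing the $u$--$w$ path first and then extending in BFS order.

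Next, I would partition $V(D)\setminus\{a_j,b_j\}_{j\in[\ell]}$ uniformly at random into disjoint sets $V^{(R,j)}$, one for each type $(R,r,s)$ and each $j\in[|R|-2]$, with $|V^{(R,j)}|=|I(R)|+\gamma n$ for $j\ne j_0$ and $|V^{(R,j_0)}|=|I(R)|/\alpha+\gamma n$, for a constant $\gamma$ with $1/k\ll\gamma\ll\alpha,\eps/(mK)$. The total size fits inside $n-2\ell$ because $\sum_i|T_i|\le(1-\eps)n$, $\ell/\alpha\le n/(k\alpha)\ll\eps n$ (using $1/k\ll\alpha\eps$), and $\gamma mK\ll\eps$. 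By \cref{lem:degree_condition_inherited_by_random_subsets}, with high probability, every $v\in V(D)$ has at least $(1/2+\alpha/2)|V^{(R,j)}|$ in- and out-neighbours in each $V^{(R,j)}$. The deterministic lower bound $|N^{\diamond_1}(x)\cap N^{\diamond_2}(y)|\ge 2\alpha n$ (from the semi-degree condition), together with a Chernoff bound and a union bound over pairs of vertices, further gives with high probability that every pair $x,y\in V(D)$ satisfies $|N^{\diamond_1}(x)\cap N^{\diamond_2}(y)\cap V^{(R,j_0)}|\ge\alpha|V^{(R,j_0)}|$ for both choices of directions.

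For each type $(R,r,s)$ I now build $|I|$ vertex-disjoint copies of $R$ iteratively for $j=1,\ldots,|R|-2$. At $j=1$, find a $\diamond$-matching from $\{a_i\}_{i\in I}$ into $V^{(R,1)}$ matching the orientation of the edge $ru$: the inherited degree bound $(1/2+\alpha/2)|V^{(R,1)}|\ge|I|$ (valid because $\gamma\gg 1/k$) makes Hall's criterion trivial, since any single $a_i$ already has $\ge|I|$ neighbours in $V^{(R,1)}$. For each $j\notin\{1,j_0\}$, find a matching from the previously-embedded parents $\{v_{p(j)}^{(i)}\}_{i\in I}\subseteq V^{(R,p(j))}$ into $V^{(R,j)}$ with the correct orientation, again via the trivial one-sided Hall's. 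At the critical step $j=j_0$, find a matching from $I$ to $V^{(R,j_0)}$ in the bipartite graph whose edges are those pairs $(i,v)$ such that $v$ is simultaneously a correctly-oriented neighbour of $v_{p(j_0)}^{(i)}$ (for the $R^{*}$-edge entering $w$) and of $b_i$ (for the edge $ws$ in $R$); by the common-neighbour estimate each left vertex has at least $\alpha|V^{(R,j_0)}|\ge|I|$ candidates, so Hall's criterion is trivially satisfied. Setting $\phi_i(r)=a_i$, $\phi_i(s)=b_i$, and $\phi_i(v_j)=v_j^{(i)}$ yields $|I|$ vertex-disjoint copies of $R$ in $D$, and a union bound over the $m$ types and the constantly many probabilistic events completes the proof.

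The main obstacle is the step $j=j_0$ matching, where the image of $w$ must simultaneously be a neighbour of the parent's image (so the embedding of $R^{*}$ is coherent) and a neighbour of $b_i$ (so the leaf $s$ attaches correctly to $b_i$). I overcome this by budgeting $V^{(R,j_0)}$ to have size $\Theta(|I|/\alpha)$, so that the $2\alpha n$ common-neighbour lower bound in $D$ alone makes Hall's criterion trivial, rather than requiring a more delicate two-sided expansion argument.
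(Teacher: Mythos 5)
Your argument is correct, but it follows a genuinely different route from the paper's. The paper deletes \emph{four} vertices from each $T_i$ (the leaves $r_i,s_i$ \emph{and} their degree-2 neighbours $r_i',s_i'$), embeds the resulting forest as a black box via \cref{lem:forest_of_small_trees} into $D$ minus the anchors and a reserved random set $B$ of size $\beta n$, and only then places the two connectors: each of $r_i',s_i'$ is chosen greedily in $B$ as a common, correctly oriented neighbour of two already-embedded vertices, using the fact (your Claim as well) that w.h.p.\ every pair of vertices has at least $\alpha\beta n$ common $(\diamond,\circ)$-neighbours in $B$; the greedy succeeds simply because $2\ell\le 2n/k\ll\alpha\beta n$. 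You instead delete only the two leaves, re-implement the iterated-matching construction of \cref{lem:linearly_many_copies_of_a_tree} with the anchor $a_i$ threaded in as the root of the first matching, and order $R^{*}$ so that the one doubly-constrained vertex $w$ (which must see both its parent's image and $b_i$) is handled at a single designated step, where you inflate the target set to size $\Theta(|I|/\alpha)$ so that the $2\alpha n$ common-neighbourhood bound alone makes one-sided Hall trivial. Both proofs rest on the same two ingredients (semi-degree inheritance in random subsets and the pairwise common-neighbourhood bound); the paper's version buys modularity by quarantining the two-sided constraint into a short greedy step after a verbatim application of \cref{lem:forest_of_small_trees}, while yours is more self-contained and nicely exploits the asymmetry that only one of the two connectors ever needs to satisfy two adjacency constraints at once. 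One cosmetic remark: your justification that $u\ne w$ should cite $|T_i|\ge k>3$ together with connectedness of $T_i$ (if $u=w$ then $T_i=\{r,u,s\}$), not the hierarchy $1/k\ll\alpha$.
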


\begin{proof}
Let $\beta$ be such that $1/k \ll \beta \ll \alpha,\eps$. For each $i \in [\ell]$, let $r_i'$ and $s_i'$ be the neighbours in $T_i$ of $r_i$ and $s_i$, respectively, and let $T_i' = T_i - \{ r_i, r_i', s_i, s_i'\}$. Let $T'$ be the forest composed of connected components $T_i'$, $i\in [\ell]$, so that $|T_i'|\leq (1-\eps)n$.
%
Let $A=\{a_i,b_i:i\in [\ell]\}$. Then $\size{A} = 2\ell \leq 2n/k$. Let $B \subset V(D) \setminus A$ be a random subset of vertices with $\size{B} = \beta n$.

Let $D' = D-A-B$. As $1/k,\beta\ll \alpha,\eps$, we have $\size{D'} \geq (1-\eps/4)n$ and $\delta^0(D') \geq (1/2 + \alpha/2)\size{D'}$. Since
\[
\size{T'} \leq (1-\eps)n\leq \frac{(1-\eps)}{(1-\eps/4)}|D'|\leq (1-\eps/2)\size{D'},
\]
we have, by 
\cref{lem:forest_of_small_trees}, with high probability we can find a copy, $S'$ say, of $T'$ inside $D'$.

Let $r_i''$ and $s_i''$ be the neighbours in $T'$ of $r_i'$ and $s_i'$, respectively, for each $i\in [\ell]$, and let $a_i''$ and $b_i''$ be the copy of $r_i''$ and $s_i''$ in $S'$, respectively.

\begin{claim}\label{claim:intersecting_neighbourhoods}
The following holds with high probability. For any pair of vertices $u, v \in V(D)$ and $\diamond, \circ \in \{ +, -\}$, we have that $\size{N^\diamond(u) \cap N^\circ(v) \cap B} \geq \alpha \beta n$.
\end{claim}
\begin{proof}[Proof of \cref{claim:intersecting_neighbourhoods}]
Let $u,v \in V(D)$ and $\diamond, \circ \in \{ +, -\}$. Note that,  by the semi-degree condition on $D$, $\size{N^\diamond(u) \cap N^\circ(v)} \geq 2\alpha n$, and hence $\size{N^\diamond(u) \cap N^\circ(v) \cap B}$ has a hypergeometric distribution with $\E\size{N^\diamond(u) \cap N^\circ(v) \cap B} \geq 2\alpha \beta n$. By \cref{lem:chernoff}, and
%
%
a union bound over all pairs $u,v \in D$ and $\diamond, \circ \in \{ +, -\}$, the statement in the claim thus holds with probability $1-o(1)$.
\end{proof}

Thus, with high probability, we can assume the property in the claim holds. Now, for each $i \in [\ell]$, embed $r_i$ and $s_i$ to $a_i$ and $b_i$, respectively. Let $\diamond_i, \circ_i, \diamond_i', \circ_i' \in \{+, -\}$ be such that $r_i' \in N^{\diamond_i}(r_i) \cap N^{\circ_i}(r_i'')$, and $s_i' \in N^{\diamond_i'}(s_i) \cap N^{\circ_i'}(s_i'')$. Greedily and disjointly, for each $i\in [r]$, embed $r_i'$ to a vertex in $N^{\diamond_i}(a_i) \cap N^{\circ_i}(a_i'') \cap B$ and embed $s_i'$ to a vertex in $N^{\diamond_i'}(b_i) \cap N^{\circ_i'}(b_i'') \cap B$. Note that this is possible, since, from the property in the claim we have, for each $i\in [r]$ 
\[
\size{N^{\diamond_i}(a_i) \cap N^{\circ_i}(a_i'') \cap B} ,\size{N^{\diamond_i'}(b_i) \cap N^{\circ_i'}(b_i'') \cap B}\geq \alpha \beta n \geq \frac{2n}{k}\geq 2r.
\]
This completes the embedding of $T$ with the property required in the lemma.
\end{proof}

\subsection{Proof of Theorem~\ref{almost}}\label{sec:proofofalmost}
We now combine \cref{lem:CSstars} and \cref{lem:CSaspaths} to find a copy of any almost-spanning tree. 

\begin{proof}[Proof of Theorem~\ref{almost}]
Take $K,k$ and $\eta$ so that $c \ll 1/K \ll 1/k \ll \eta \ll \eps, \alpha$. Let $D$ be an $n$-vertex graph with $\delta^0(D) \geq (1/2 + \alpha) n$. Let $T$ be an oriented tree on at most $(1-\eps)n$ vertices with $\Delta^\pm(T) \leq cn/ \log n$. By \cref{lem:decomposing_trees}, we can find forests $T_0 \subset T_1 \subset T_2 \subset T_3= T$ satisfying \emph{\ref{cond1}} to \emph{\ref{cond4}}. Randomly partition $V(D)$ into three parts, $V(D) = V_1 \cup V_2 \cup V_3$ so that $\size{V_1} = \size{T_1} + \eps n /3$, $\size{V_2} = \size{T_2} - \size{T_1} + \eps n /3$, and $\size{V_3} = \size{T} - \size{T_2}+ \eps n /3$. Note that, with probability at least $\eps/3$, we have $v \in V_1$.

By applying \cref{lem:degree_condition_inherited_by_random_subsets} with $A = V_1$, with high probability we have $\delta^0(D[V_1]) \geq (1/2 + \alpha/2) \size{V_1}$. Thus, by applying \cref{lem:CSstars} to $D = D[V_1]$ and $T=T_1$, we can find a copy of $T_1$ in $V_1$ in which $t$ is copied to $v$. By \emph{\ref{cond3}}, for some $\ell\in \N$,  $T_2$ is formed from $T_1$ by the addition of trees $F_i$, $i \in [\ell]$, where $k \leq \size{F_i} \leq K$, which are each attached to $T_1$ by exactly two bare paths of length 2, $P_i$ and $Q_i$ say. For each $i \in [\ell]$, let $p_i$ and $q_i$ be the endpoint of $P_i$ and $Q_i$, respectively, which belongs to $T_1$. Let $a_i$ and $b_i$ be the embedding in $V_1$ of $p_i$ and $q_i$, respectively, and let $A=\{a_i,b_i:i \in [\ell]\}$. 

By \cref{lem:degree_condition_inherited_by_random_subsets} again, we have, with high probability, $\delta^0(D[A \cup V_2]) \geq (1/2 + \alpha/2)\size{A \cup V_2}$. Applying \cref{lem:CSaspaths} to $D[A \cup V_2]$ with $T_i = F_i \cup P_i \cup Q_i$, $r_i = p_i$, and $s_i = q_i$, for each $i\in [\ell]$, we can find a copy of $T_2$ in $D[V_1 \cup V_2]$. Now since $T_2$ is a tree, any vertex in $T_3 \setminus T_2$ can have at most one neighbour in $T_2$. Note that, by \cref{lem:degree_condition_inherited_by_random_subsets}, we know that with high probability every vertex in $D$ has at least $(1/2 + \alpha/2) \size{V_3}\geq \eta n$ in-neighbours in $V_3$ and at least $(1/2 + \alpha/2) \size{V_3} \geq \eta n$ out-neighbours in $V_3$. Let $j=|T_3|-|T_2|\leq \eta n$ and order the vertices of $T_3 \setminus T_2$ by $u_1, \dots, u_j$, so that $T[V(T_2) \cup \{u_1, \dots, u_i\}]$ is a tree for each $i\in [j]$. Embed the vertices $u_1,\ldots,u_j$ greedily into $V_3$, to complete the copy of $T$ in $D$. Noting that this embedding was successful with probability at least $\eps/3-o(1)>0$, there must always be such a copy of $T$.
\end{proof} 
\section{Absorption from switching}\label{sec:switching}

The aim of this section is to prove \cref{switching}. The main idea is as follows. Given a small tree $T$, we split it into two trees $T'$ and $T''$ and randomly embed $T'$ vertex by vertex. With positive probability, the resulting tree is such that, given the right number of other vertices in the graph, we can embed $T''$ to extend this into a copy of $T$ while making some small modifications to the copy of $T'$. Essentially, we show that, for each vertex $y$, there are many vertices in the embedding of $T'$ which we can switch with $y$ and still get a copy of $T$. We then embed $T''$ vertex-by-vertex, at each step switching an unused vertex into the copy of $T'$ in place of a vertex which we can instead use to extend the (partial) embedding of $T''$. 

\begin{proof}[Proof of \cref{switching}] Take $\lambda$ such that $\eps\ll\lambda \ll \mu$. Using Proposition~\ref{littletree}, let $T=T'\cup T''$, where $t\in V(T')$ and $\eps n< |T''|\leq 3\eps n$.
Let $\ell=|T'|$, and label $V(T')$ as $t_1,\ldots,t_\ell$ so that $t_1=t$, $T'[t_1,\ldots,t_i]$ is a tree for each $i\in [\ell]$, and the leaves of $T'$ appear last in this order (except for $t$) and in any bare path of length 6 the middle 3 vertices appear consecutively. For each $i\in [\ell]$, let $T_i=T'[\{t_1,\ldots,t_i\}]$.

Pick an arbitrary vertex $v\in V(D)$, and let $R_1$ be the graph with only the vertex $v$. For each $i=2,\dots,\ell$, do the following. Let $\diamond_i\in \{+,-\}$ be such that $N^{\diamond_i}_{T_{i}}(t_i)$ is non-empty (and thus contains exactly one vertex. Let $\circ_i \in \{+,-\}$ with $\circ_i \neq \diamond_i$. Take $R_{i-1}$, which is a copy of $T_{i-1}$, and let $w_i$ be the copy of the sole vertex in $N^{\diamond_i}_{T_{i}}(t_i)$ in $R_{i-1}$. 
Pick a vertex $v_i$ independently at random from $N_D^{\circ_i}(w_i)\setminus V(R_{i-1})$. Embed $t_i$ to $v_i$ to get $R_i$, a copy of $T_i$.

Note that this process always ends with a copy of $T'$, as $N_D^{\circ_i}(w_i)\setminus V(R_{i-1})$ always has size at least $d^{\circ_i}_D(w_i)-|T|\geq (1/2+\alpha)n-\mu n$ and $\mu\ll \alpha$. Let $R=R_\ell$, so that $R$ is a copy of $T'$. We will show that, with positive probability the following property holds.
\begin{enumerate}[label = \textbf{S}]
    \item For each distinct $x,y\in V(D)$ and $\diamond \in \{+,-\}$,\label{propP}
    \[
    |\{i\in [\ell]:v_i\in N^\diamond_D(x)\text{ and }N^\pm_R(v_i)\subset N^\pm_D(y)\}|\geq \lambda n.
    \]
\end{enumerate}

Noting $|R|=|T'|\leq |T|-|T''|+1\leq (\mu-\eps)n$, let $A\subset V(D)$ contain $V(R)$ so that $|A|=(\mu-\eps)n$, and let $v$ be the copy of $t$. We will show in two claims that, with positive probability \ref{propP} holds, and that, if \ref{propP} holds, then $A$ and $v$ satisfy the property in the theorem. Thus, the theorem follows from these two claims.

\begin{claim}\label{cl:P}
With positive probability, \ref{propP} holds.
\end{claim}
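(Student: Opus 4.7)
Fix distinct $x,y \in V(D)$ and $\diamond\in\{+,-\}$, and set
\[
X_i=\mathbf{1}\{v_i\in N^\diamond_D(x),\ N^\pm_R(v_i)\subset N^\pm_D(y)\}, \qquad I=I(x,y,\diamond)=\sum_{i=1}^\ell X_i.
\]
The plan is to prove $\mathbb{E}[I]\geq 2\lambda n$ and that $I$ concentrates around its mean with failure probability $o(n^{-2})$, after which a union bound over the at most $2n^2$ triples $(x,y,\diamond)$ finishes the claim.

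For the expectation, the main ingredient is the one-step estimate: for every $k\in\{2,\ldots,\ell\}$, every $u\in V(D)$ and every $\epsilon\in\{+,-\}$,
\[
\Pr[v_k\in N^\epsilon_D(u)\mid v_1,\ldots,v_{k-1}]\geq \alpha,
\]
since the sampling pool $N^{\circ_k}_D(w_k)\setminus V(R_{k-1})$ has size at most $n$ and intersects $N^\epsilon_D(u)$ in at least $(2\alpha-\mu)n\geq \alpha n$ vertices (by $\delta^0(D)\geq(1/2+\alpha)n$, inclusion-exclusion, and $\mu\ll\alpha$). Fix a large constant $D_0$ and let $\mathcal{I}=\{i\in[\ell] : i\geq 2,\ j_i\geq 2,\ \deg_{T'}(t_i)\leq D_0\}$; a handshake argument together with $\Delta^\pm(T)\leq cn/\log n$ gives $|\mathcal{I}|\geq (1-2/D_0)|T'|-o(n)\geq \mu n/4$. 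For each $i\in\mathcal{I}$, the event $\{X_i=1\}$ is the conjunction of $\deg_{T'}(t_i)+1$ events of the form $\{v_k\in N^{\epsilon_k}_D(u_k)\}$ (one for $v_i\in N^\diamond_D(x)$, one for the parent $v_{j_i}\in N^{\diamond_i}_D(y)$, and one for each child $t_k$ of $t_i$ in $T'$); chaining the one-step estimate in the order in which the $v_k$ are revealed gives $\Pr[X_i=1]\geq \alpha^{D_0+1}$, so $\mathbb{E}[I]\geq \alpha^{D_0+1}\mu n/4\geq 2\lambda n$ by $\lambda\ll\mu$.

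For the concentration I would apply Azuma's inequality (Lemma~\ref{lem:azuma}) to the Doob martingale $M_k=\mathbb{E}[I\mid v_1,\ldots,v_k]$, targeting a difference bound $|M_k-M_{k-1}|\leq C\deg_{T'}(t_k)$. Granting this, $\sum_k c_k^2\leq C^2\Delta^\pm(T)\cdot 2|T'|=O(cn^2/\log n)$, and Azuma gives
\[
\Pr\!\left[\,|I-\mathbb{E}[I]|\geq \lambda n\,\right]\leq 2\exp\!\left(-\Omega(\lambda^2\log n/c)\right)=o(n^{-2}),
\]
since $c\ll\lambda$ makes $\lambda^2/c$ as large as we please. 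A union bound over the $O(n^2)$ triples $(x,y,\diamond)$ then completes the argument.

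The main obstacle is the difference bound itself. Under a coupling of the two processes arising from $v_k$ versus $v_k'$ that shares the uniform randomness governing every later choice, the $X_j$ whose values change directly are exactly those with $j=k$ or $t_j\in N_{T'}(t_k)$, contributing only $1+\deg_{T'}(t_k)$ terms. The cascade through differing forbidden sets $V(R_{k'-1})$ perturbs each later sampling distribution by $O(1/n)$ and so is $O(1)$ in aggregate; but if $t_{k'}$ is a descendant of $t_k$ in the rooted tree $T'$ then $w_{k'}=v_k$ itself differs between the two runs, which can force $v_{k'}$ to change wholesale and propagate further. Defeating this descendant cascade is where the particular ordering of $V(T')$ chosen at the outset --- leaves placed last (so that their alterations have no descendants) and the middle three vertices of every length-$6$ bare path placed consecutively (so that degree-$2$ cascades are resolved within a window of three steps) --- enters, together with $\Delta^\pm(T)\leq cn/\log n$ to keep the final Azuma bound tight enough for the union bound.
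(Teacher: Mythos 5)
Your expectation computation is sound: restricting to bounded-degree indices and chaining the one-step estimate $\Pr[v_k\in N^\epsilon_D(u)\mid v_1,\ldots,v_{k-1}]\geq\alpha$ over the at most $D_0+1$ relevant choices does give $\E[I]=\Omega(\alpha^{D_0+1}\mu n)\geq 2\lambda n$. The gap is exactly where you flag it: the Doob-martingale difference bound $|M_k-M_{k-1}|\leq C\deg_{T'}(t_k)$ is never established, and it is false in general. If $t_k$ is an internal vertex, replacing $v_k$ by $v_k'$ changes the sampling pool $N^{\circ_{k'}}_D(w_{k'})\setminus V(R_{k'-1})$ for every child $t_{k'}$ of $t_k$ from one set of size roughly $n/2$ to another whose overlap with the first may be as small as $2\alpha n$; under any coupling the child then relocates with probability bounded away from $0$, and this propagates down the whole subtree rooted at $t_k$. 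Each relocated descendant $t_j$ flips $X_j$ with probability $\Theta(\alpha^{O(1)})$, so the expected number of changed indicators — which is what bounds $|M_k-M_{k-1}|$ — is of order $\alpha^{O(1)}\cdot|\mathrm{subtree}(t_k)|$, i.e.\ $\Theta(n)$ for vertices near the root. The ordering you invoke does not defeat this: placing leaves last and bare-path middles consecutively says nothing about internal vertices carrying large subtrees, so $\sum_k c_k^2$ cannot be kept at $O(cn^2/\log n)$ and Azuma gives nothing.

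The paper's proof avoids the cascade entirely rather than defeating it. It splits into two cases (Lemma~\ref{lem:few_leaves_many_bare_paths} guarantees $T'$ has either $2\mu^2n$ leaves or $\mu^2n$ disjoint bare paths of length $6$) and in each case sums indicators that are functions of a \emph{bounded window of consecutive} random choices: for leaves, a two-stage analysis (first a submartingale in the parents $t_1,\ldots,t_{\ell'}$ with increments $c_i$ equal to the number of out-leaves at $t_i$, using $\sum_i c_i^2\leq cn^2/\log n$; then a $\pm1$-increment submartingale in the leaves themselves, whose only $R$-neighbour is their already-embedded parent); for bare paths, the indicator for each path depends only on the three consecutively embedded middle vertices $v_{j_i-1},v_{j_i},v_{j_i+1}$. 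In both cases each increment is determined at the step it is revealed, so the bounded-differences condition is immediate and no coupling is needed — this locality is the purpose of the vertex ordering, not taming a descendant cascade. To repair your argument you would have to restrict $I$ to such a local family of indices (at which point you essentially reproduce the paper's case analysis), or supply a genuinely different concentration mechanism; as written, the concentration step does not go through.
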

\begin{proof}[Proof of Claim~\ref{cl:P}] Fix $x,y\in V(D)$ and $\diamond \in \{+,-\}$ with $x\neq y$. We will show that \ref{propP} holds for $x,y$ and $\diamond$ with probability at least $1-1/4n^2$, so that the result follows by a union bound.

For convenience, let us take two cases. Either $T'$ has $2\mu^2 n$ leaves (Case I) or $\mu^2 n$ vertex-disjoint bare paths with length 6 (Case II). One of these cases must hold, as, suppose that Case I does not hold and thus $T'$ has fewer than $2\mu^2n$ leaves. Then, by \cref{lem:few_leaves_many_bare_paths}, we know that there is some $s$ and some vertex-disjoint bare paths $P_i$, $i \in [s]$, in $T'$ of length $6$ so that $\size{T'-P_1 - \dots -P_s} \leq 72 \mu^2 n + 2\ell/7$. Removing the internal vertices of each path $P_i$, $i\in [s]$, from $T'$ removes 5 vertices, and $|T'|=\ell$, so that $\ell-5s\leq 72\mu^2n+2\ell/7$, and therefore
\[
s \geq (\ell-2\ell/7)/5 -72 \mu^2 n/5 \geq \ell/7-15\mu^2n\geq (\mu-3\eps)n/7-15\mu^2 n\geq \mu^2 n,
\]
where the final inequality holds since $\eps \ll \mu$.

\smallskip

\textbf{Case I.} Assume that at least $\mu^2 n$ leaves of $T'$ are out-leaves, where the proof whenever $T'$ has at least $\mu^2n$ in-leaves follows similarly. Let $\ell'$ be the smallest integer such that, for each $i>\ell'$, $t_i$ is a leaf of $T'$. We will analyse the embedding of $T'$ in two stages. First, for the embedding of $t_1,\ldots,t_{\ell'}$, we show that with high probability there will be plenty of these vertices which are adjacent to out-leaves in $t_{\ell'+1},\ldots,t_\ell$ that are embedded to in-neighbours of $y$. Then, we will analyse the embedding of $t_{\ell'+1},\ldots,t_\ell$, and show that plenty of these vertices whose in-neighbour in $t_{1},\ldots,t_{\ell'}$ was embedded to an in-neighbour of $y$ are themselves embedded to a $\diamond$-neighbour of $x$. Here, we require that the leaves we consider are at distance at least 2 from $t$ in $T'$. Let $\mu' = \mu/2$. By the degree condition, at least $\mu' n$ of these out-leaves are at distance at least 2 from $t$.

For each $i\in [\ell']$, let $c_i$ be the number of out-leaves of $t_i$ in $T'$. For each $i\in [\ell']$, let $X_i$ be the random variable which takes value $c_i$ if $v_i \in N^-_D(y)$, and 0 otherwise. Note that, for each $i\in [\ell]$, if $c_i>0$, then, when the process selects $v_i$, having chosen $v_1,\ldots,v_{i-1}$, $X_i=c_i$ with probability at least
\begin{equation}\label{thatonethere} 
\frac{|(N^{\circ_i}_D(w_i)\setminus V(R_i))\cap N^-_D(y)|}{n}\geq \frac{|(N^{\circ_i}_D(w_i))\cap N^-_D(y)|-|R_i|}{n}\geq \frac{2\alpha n-\mu' n}{n}\geq \alpha,
\end{equation}
as $\alpha\gg\mu$. Thus, for each $i\in [\ell]$, $\E[X_{i} \mid X_1, \dots\ X_{i-1}]\geq \alpha c_{i}$.

Note that $\sum_{i\in [\ell']}c_i$ is the number of out-leaves of $T'$, so that $\sum_{i\in [\ell']}c_i\geq (\mu')^2n$. On the other hand, clearly $\sum_{i\in [\ell']}c_i \leq n$, and for every $i \in [\ell']$, $c_I \leq \Delta(T)\leq cn/\log n$. Thus, as before by repeated application of \eqref{eq:quadratic_inequality}, we have $\sum_{i\in [\ell']}c_i^2\leq cn^2/\log n$. Note that $\size{X_i-\alpha c_i} \leq c_i$ for each $i\in [\ell']$. Therefore, we can apply \cref{cor:azuma}~\ref{cor:azuma_sub} with $t=\alpha (\mu')^2 n/2$ to get
%
\begin{equation}\label{eq1}
\Prob\lsb \sum_{i\in [\ell']}X_i \leq \sum_{i\in [\ell']}\alpha c_i-t \rsb \leq 2 \exp \lb \dfrac{-t^2}{\sum_{i\in [\ell']}c_i^2} \rb \leq 2 \exp \lb \dfrac{-t^2 \log n}{cn^2} \rb \leq \frac{1}{8n^2}.
\end{equation}
Here, the final inequality holds because $c \ll \mu,\alpha$. Therefore, with probability at least $1-1/8n^2$, we have
$\sum_{i\in [\ell']} X_i \geq \sum_{i\in [\ell']}\alpha c_i-\alpha (\mu')^2 n/2\geq \alpha (\mu')^2 n/2$.

Let $m=\sum_{i\in [\ell']}X_i\geq \alpha (\mu')^2 n/2$. Consider now the embedding of  $t_{\ell'+1},\ldots,t_{\ell}$. Let $j_1,\ldots,j_m\in \{\ell'+1,\ldots,\ell\}$ be such that $t_{j_i}$ is an out-leaf of $T'$ and the image of $N^{-}_{T'}(t_{j_i})$ is an in-neighbour of $y$ for each $i \in [m]$. For each $i \in [m]$, let $Y_i$ be the random variable which takes value $1$ if $v_{j_i}$ is in $N^\diamond_D(x)$, and 0 otherwise. Note that, similarly to the calculation in \eqref{thatonethere}, $\E[Y_i\mid Y_1,\ldots Y_{i-1}]\geq \alpha$ for each $i\in [m]$. As before, since $\size{Y_i-\alpha } \leq 1 - \alpha$ for each $i\in [m]$ as $\alpha\leq 1$, we can apply \cref{cor:azuma}~\ref{cor:azuma_sub} with $t = \alpha m/2$ to get
\begin{equation}\label{eq2}
\Prob\lsb \sum_{i\in [m]} Y_i < \alpha m -t \rsb \leq 2\exp \lb \frac{-t^2}{(1-\alpha)^2 m} \rb \leq \frac{1}{8n^2},
\end{equation}
where the final inequality holds because $1/n\ll \mu, \alpha$. Hence, with probability at least $1-1/8n^2$, we have $\sum_{i\in [m]}Y_i\geq \alpha m/2$. Note that $\size{\{i\in [\ell]:v_i\in N^\diamond_D(x)\text{ and }N^\pm_R(v_i)\subset N^\pm_D(y)\}}\geq \sum_iY_i$. 

Thus, by taking a simple union bound over the events in \eqref{eq1} and \eqref{eq2} and using $\lambda\ll \alpha,\mu$, we see that in total, with probability at least $1-1/4n^2$, 
\[
\size{\{i\in [\ell] \colon v_i\in N^\diamond_D(x)\text{ and }N^\pm_R(v_i)\subset N^\pm_D(y)\}}\geq \alpha m/2\geq \lambda n.
\]
Taking a union bound over all possible $x, y \in V(D)$ and $\diamond \in \{ +, -\}$, we see that in this case \ref{propP} holds with probability at least $1/2$. 
\smallskip

\textbf{Case II.} Let $m=\mu^2 n$. Let $P_1,\ldots,P_m$ be vertex disjoint paths of length 6 in $T$, so that, if, for each $i\in [m]$, $j_i$ is such that $t_{j_i}$ is the middle vertex of $P_i$, then the vertices $t_{j_i}$ appear in order in $t_1,\ldots,t_\ell$.

For each $i\in [m]$, let $X_i$ be the random variable taking value 1 if 
\begin{equation}\label{thisonehere}
v_{j_i} \in N^\diamond_D(x)\text{ and }N^\pm_R(v_{j_i})\subset N^\pm_D(y)
\end{equation}
and 0 otherwise. Note that, by virtue of the labelling of the $t_1,\ldots,t_\ell$, the vertices that appear in $N^\pm_R(v_{j_i})$ are exactly the vertices $v_{j_i-1}$ and $v_{j_i+1}$. When we choose each of $v_{j_i-1},v_{j_i},v_{j_i+1}$, the probability that it satisfies its condition in \eqref{thisonehere} (however the previous vertices $v_{i'}$ are chosen) is at least $\alpha$, in a calculation similar to \eqref{thatonethere}. Therefore, we have, for each $i\in [m]$, that  $\E[X_{i} \mid X_1, \dots\ X_{i-1}]\geq \alpha^3$. Since $\size{X_i-\alpha^3} \leq 1$ for each $i\in [m]$ as $\alpha\leq 1$, we can apply \cref{cor:azuma}~\ref{cor:azuma_sub} with $t=\alpha^3m/2$ to get 
\[
\Prob\left[\sum_{i\in [m]} X_i \leq \alpha^3m-t\right] \leq 2 \exp \lb \frac{-t^2}{m} \rb = 2 \exp \lb \frac{-\alpha^6m}{4} \rb \leq \frac{1}{4n^2},
\]
as $1/n\ll \alpha,\mu$.
Therefore, with probability at least $1-1/4n^2$, as $\lambda\ll \mu,\alpha$,
\begin{align*}
\size{\{i\in [\ell] \colon v_i\in N^\diamond_D(x)\text{ and }N^\pm_R(v_i)\subset N^\pm_D(y)\}}
&\geq \size{\{i\in [m]:v_{j_i}\in N^\diamond_D(x)\text{ and }N^\pm_R(v_{j_i})\subset N^\pm_D(y)\}}\\
&= \sum_{i\in [m]} X_i\geq \alpha^3 m/2\geq  \lambda n.
\end{align*}
Taking a union bound over all possible $x, y \in V(D)$ and $\diamond \in \{ +, -\}$, we see that in this case \ref{propP} holds with probability at least $1/2$.
\end{proof}

\begin{claim}\label{finalclaim}
If \ref{propP} holds then $A$ and $v$ satisfy the property in the theorem.
\end{claim}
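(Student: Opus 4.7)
The plan is to iteratively extend the embedding $\phi_0$ of $T'$ given by $R$ (with $\phi_0(t)=v$) to an embedding of all of $T$ into $D[B]$, processing the vertices of $T''$ one at a time using a switching argument driven by \ref{propP}. Let $U=B\setminus V(R)$, so that $|U|=\mu n-|T'|=|T''|-1=:m$; these are the extra vertices we need to bring into the embedding. Order $V(T'')\setminus\{t\}$ as $u_1,\ldots,u_m$ so that each $u_i$ has a unique $T''$-neighbour $u_{j_i}$ in $T''[\{t,u_1,\ldots,u_{i-1}\}]$, and let $\diamond_i\in\{+,-\}$ be the direction of that edge in $T$.

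At step $i+1$, given an embedding $\phi_i$ of $T'\cup T''[\{t,u_1,\ldots,u_i\}]$, set $z=\phi_i(u_{j_{i+1}})$ and pick any vertex $y\in U$ that has not yet been inserted into the $T'$-embedding; since $y\in B\setminus V(R)$ while $z\in V(R)$, we have $y\ne z$. Applying \ref{propP} with $x=z$, this $y$, and $\diamond=\diamond_{i+1}$ yields at least $\lambda n$ indices $k\in[\ell]$ with $v_k\in N_D^{\diamond_{i+1}}(z)$ and $N_R^\pm(v_k)\subset N_D^\pm(y)$. From among these, pick $k$ with $v_k\notin \mathcal{U}_i\cup\{v,z\}$, where $\mathcal{U}_i$ is the set of $v_{k'}$'s switched out in earlier steps, and set $\phi_{i+1}(u_{i+1})=v_k$ and $\phi_{i+1}(t_k)=y$, leaving the rest as in $\phi_i$. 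The edge at $u_{i+1}$ is witnessed by $v_k\in N_D^{\diamond_{i+1}}(z)$, while the edges of $T'$ at $t_k$ require $y$ to be correctly adjacent to $\phi_{i+1}(t_{k'})$ for each $T'$-neighbour $t_{k'}$ of $t_k$.

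The main obstacle is this last adjacency check: the condition $N_R^\pm(v_k)\subset N_D^\pm(y)$ from \ref{propP} refers to the original copy $R$, but the current image $\phi_{i+1}(t_{k'})$ may differ from $v_{k'}$ if $t_{k'}$ was switched earlier. To resolve this we appeal to a strengthening of \ref{propP} implicit in the proof of Claim~\ref{cl:P}: in Case I the $\lambda n$ good indices can be taken to be leaves of $T'$, and in Case II to be middle vertices of $m$ vertex-disjoint bare paths of length $6$. Either way no two good indices are $T'$-adjacent and the $T'$-neighbours of a good index are never themselves good, so restricting switches to good indices ensures that, at each step, the $T'$-neighbours of the chosen $t_k$ have not been switched; hence $\phi_{i+1}(t_{k'})=v_{k'}\in N_R^\pm(v_k)\subset N_D^\pm(y)$ with the correct directions. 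Finally, the exclusions $v_k\notin\mathcal{U}_i\cup\{v,z\}$ remove at most $i+2\le m+2\le 3\eps n+2$ indices from the pool of at least $\lambda n$ good ones; as $\eps\ll\lambda$ a valid $k$ exists at every step, and after $m$ iterations $\phi_m$ is the required embedding of $T$ into $D[B]$ with $\phi_m(t)=v$.
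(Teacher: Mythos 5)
Your proposal follows the same overall switching strategy as the paper — embed $T''$ vertex by vertex, each time swapping an unused vertex $y$ of $B$ into the copy of $T'$ in place of some $v_k$ that is then released to serve as the new leaf — but it resolves the central technical difficulty differently. That difficulty is exactly the one you identify: \ref{propP} controls $N^\pm_R(v_k)$, whereas the switch must respect the neighbourhood of $v_k$ in the \emph{current} tree, which may have drifted from $R$ after earlier switches. The paper handles this by adding a third requirement to the choice of switch index, namely $d^+_{S_{i-1}}(v_{j'_i})+d^-_{S_{i-1}}(v_{j'_i})\leq 4/\lambda$, and then counting: each switch perturbs the neighbourhoods of at most $4/\lambda$ vertices, so after at most $3\eps n$ switches only $\lambda n/4$ of the $\lambda n$ candidates guaranteed by \ref{propP} are disqualified. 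You instead restrict every switch to the indices actually produced in the proof of \cref{cl:P} — leaves of $T'$ in Case I, middle vertices of disjoint bare paths in Case II — and use that this family is independent in $T'$ and disjoint from its own neighbourhood, so the $T'$-neighbours of a switched vertex are never themselves switched and no drift occurs. Both arguments are sound; yours avoids the degree bookkeeping at the cost of needing structural information about \emph{which} indices are good.

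The one caveat is that, as written, your argument does not prove the claim from \ref{propP} alone: \ref{propP} only asserts that at least $\lambda n$ indices are good, not that they can all be taken to be leaves or middle vertices of the fixed bare paths. Your proof therefore requires restating \ref{propP} (or \cref{cl:P}) to record this extra structure. That is harmless — the proof of \cref{cl:P} does deliver the stronger statement, since in Case I the count is witnessed entirely by the out-leaf indices $j_i$ and in Case II entirely by the middle-vertex indices — but the strengthening must be made explicit for the deduction to be complete. With that amendment your checks (that $z\in V(R)$ so $z\neq y$, that attachment points are never candidates for switching, and that the at most $3\eps n+2$ exclusions are absorbed by $\eps\ll\lambda$) all go through.
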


\medskip

\noindent\textit{Proof of Claim~\ref{finalclaim}.}
Let $B\subset V(D)$ with $A\subset B$ and $|B|=\mu n$.
Let $k=|T''|-1\leq 3\eps n$ and label the vertices of $V(T'')\setminus V(T')$ as $s_1,\ldots,s_k$, so that, for each $i\in [k]$, $T'_i:=T'\cup T''[\{s_1,\ldots,s_i\}]$ is a tree. Note that $|B\setminus V(R)|=k$ and label the vertices of $B\setminus V(R)$ as $y_1,\ldots,y_k$. 

Let $S_0=R$. Now, for each $i=1,\ldots,k$ in turn, do the following. Let $x_i\in V(S_{i-1})$ and $\diamond_i\in \{+,-\}$ be such that we need to add a $\diamond_i$-neighbour to $x_i$ as a leaf to get a copy of $T'_i$. 
Choose some $j'_i\in [\ell]\setminus \{1,j'_1,\ldots,j'_{i-1}\}$ such that 
\[
v_{j'_i}\in N^{\diamond_i}_D(x_i)\;\text{ and }\;N^\pm_{S_{i-1}}(v_{j'_{i}})\subset N^\pm_D(y_i)\;\text{ and }\; d^+_{S_{i-1}}(v_{j'_i})+d^-_{S_{i-1}}(v_{j'_i})\leq 4/\lambda.
\]
Replace $v_{j'_i}$ with $y_i$ in $S_{i-1}$ and add $v_{j'_i}$ as a $\diamond_i$-neighbour of $x_i$ to get $S_i$, a copy of $T'_i$ with vertex sets $V(S_{i-1})\cup \{y_i\}$.

We need only show that there is such a vertex $v_{j'_i}$ in each case, as if this process finds $S_k$, then we have a copy of $T_k'=T$. Fix then $i\in [k]$. By \ref{propP}, we know there are at least $\lambda n$ choices of $i' \in [\ell]$ such that $v_{i'} \in N^{\diamond_i}_D(x_i)$ and $N^\pm_R(v_{i'})\subset N^\pm_D(y_{i})$. By the construction of $S_{i-1}$, there are at most $(4/\lambda)\cdot 3\eps n\leq \lambda n/4$ vertices adjacent to the vertices $v_{j'_1},\ldots,v_{j'_{i-1}}$ in $S_{i-1}$, and so at most $\lambda n/4$ vertices of $R$ can be adjacent to the vertices $v_{j'_1},\ldots,v_{j'_{i-1}}$ in $S_{i-1}$. Therefore for all but at most $\lambda n/4$ values of $i'\in [\ell]$, we have $N^+_R(v_{i'})=N^+_{S_{i-1}}(v_{i'})$ and $N^-_R(v_{i'})=N^-_{S_{i-1}}(v_{i'})$. Furthermore, as $\sum_{i'\in [\ell]}(d^+_{T}(t_{i'})+d^-_T(t_{i'}))\leq 2n$, at most $\lambda n/2$ values of $i\in [k]$ can have  $d^+_{S_{i-1}}(v_{j'_i})+d^-_{S_{i-1}}(v_{j'_i})> 4/\lambda$. Since $k \ll \lambda$, we know that there will be at least $\lambda n/8$ choices for $j'_i$, and so such a $j'_i$ will always exist by \ref{propP}. \hspace{11cm} \qed
\end{proof}

\subsection*{Acknowledgements} The authors would like to thank the anonymous referees for their helpful comments that improved this paper.

\bibliographystyle{abbrv}
\bibliography{bibliography}

\end{document}